\newtheorem{remark}{Remark}
\newtheorem{proposition}{Proposition}
\newtheorem{lemma}{Lemma}
\newtheorem{theorem}{Theorem}
\newcommand{\Z}{\mathbb{Z}}
\def\rest{\hskip 1pt{\hbox to 10.8pt{\hfill
\vrule height 7pt width 0.4pt depth 0pt\hbox{\vrule height 0.4pt
width 7.6pt depth 0pt}\hfill}}}
\def\beq{\begin{equation}}
\def\eeq{\end{equation}}
\def\B{{\mathbb B}}
\def\R{{\mathbb R}}
\def \ra {{\rm a_0 }}
\def \mM {{\mathcal M}}
\def \mT {{\mathcal T}}
\def \mQp {{\mathcal Q}_p}
\def\N{{\mathbb N}}
\def \rN{{\rm N}}
\def \Ext {\mathfrak E{\rm xt}}
\def \mpc{{\mathfrak p_{\rm c}}}
\def \mN {{\mathcal N}}
\def \mNcov {{\mathcal N}_{\rm cov}}
\def\S{{\mathbb S}}
\def \deg {{\rm deg \, } }
\def \rL{{\rm L}}
\def \rmc {{\rm m}_{\rm c}}
\def \rC {{\rm C}}
\def \rc {{\rm c}}
\def \rd {{\rm d}}
  \def \Liftp{ {{\mathcal L}^{\rm ift}_p}}
\def \fp {{\mathfrak p}}
\def \fM {{\mathfrak M}}
\def \trspace{{W^{1-{1\slash p}, p} (\partial \mM, \mN)}}
\def\Wunp {W^{1,p}}
\def\Wtrp {{W^{1-{1\slash p}, p}}}
\def \rE {{\rm E}}
\def \uobst {{\mathfrak u_{\rm obst}}}
\def \wobst {{\mathfrak w_{\rm obst}}}
\def \Text { {\mathcal  T}_{\rm ext}^p}
\def \TrL {{\rm T_{\rm {race, 0}}^{m, p}}}
\def \TrLq {{\rm T}_{{\rm race}, \mathfrak q_0}^{p}}
\def \TrLqmp {{\rm T}_{{\rm race}, \mathfrak q_0}^{m, p}}
\def \TrLqmpc {{\rm T}_{{\rm race}, \mathfrak q_0}^{\mm, \rmc}}
\def \Wp {{ {\rm W}_{m, \rm ct}^{1,p}}}
\def\sP {{\rm P}_{\rm south}}
\def \rk {{\rm k}}
\def \mm {{\mathfrak m}}
\def \mq {{\mathfrak q}}
\def \mi {{\mathfrak i}}
\def \mj {{\mathfrak j}}
\def \mr {{\mathfrak r}}
\def \mv {{\mathfrak v}}
\def \Cyl{  {\rm C}_{\rm yld} }
\def \mn {{\mathfrak n}}
\def \ms {{\mathfrak s}}
\def \Ext{ \mathfrak E^{\rm xt}}
\def\QED{\hbox{${\vcenter{\vbox{
   \hrule height 0.4pt\hbox{\vrule width 0.4pt height 6pt
   \kern5pt\vrule width 0.4pt}\hrule height 0.4pt}}}$}\vspace{7pt}}
\begin{document}
\author{Fabrice BETHUEL
\thanks{Sorbonne Universit\'es, UPMC Univ Paris 06, UMR 7598, Laboratoire Jacques-Louis Lions, F-75005, Paris, France } \
 \thanks{ CNRS, UMR 7598, Laboratoire Jacques-Louis Lions, F-75005, Paris, France} 
 }



\title{A  new obstruction to the extension problem for Sobolev maps  between manifolds}
\date{}
\maketitle

\begin{center}
{\it Dedicated to Ha\"im Brezis  on the occasion of his $70^{th}$ birthday.\\ His work and  friendship are a permanent\\ source of inspiration and motivation.}

\end{center}
\begin{abstract}
The main  result of the present paper, combined with   earlier results of  Hardt and Lin \cite {HL} settles the extension problem for $W^{1,p}(\mM, \mN)$, where $\mM$ and $\mN$  are compact riemannian manfolds, $\mM$ having non-empty smooth boundary and assuming moreover that $\mN$ is simply connected.  The main question which is studied is the following: Given a map in the trace space $\Wtrp (\partial \mM, \mN), $
 does it possess an extension in $\Wunp(\mM, \mN)$?   We show that the answer is  negative in the case $\mpc +1\leq p<m=\dim \mM$, where  the number $\mpc$ is related to the topology of $\mN$ and is defined in \eqref{deftrp}.  We also adress  the case $\mN$ is not simply connected, providing various results and rising some open questions. In particular,  we stress in that case  the relationship between the extension problem and the lifting problem to   the universal covering manifold. 
\end{abstract}

\section{Introduction}
\label {intro}
\subsection{The extension problem in the Sobolev class}
\label{mainresult}
 We consider in this paper two compact riemannian  manifolds  $\mM$ and $\mN$ with $\mN$ isometrically embedded in some euclidean space $\R^\ell$,  $\mM$ having a nonempty smooth boundary.  For  given $1< p <\infty$, we consider the Sobolev space  $W^{1, p} (\mM, \mN)$ of maps between $\mM$ and $\mN$ defined by
 \begin{equation*}
 W^{1, p} (\mM, \mN)=\{ u \in W^{1, p} (\mM, \R^\ell), \ u(x) \in N  {\rm \ for \ almost \ every \ }  x \in \mM \}. 
\end{equation*}
  By the trace theorem, the restriction of any map in $W^{1, p} (\mM, \mN)$ is a map in the trace space
   $W^{1- {1\slash p}, p} (\mM, \mN)$ defined by 
   \begin{equation}
   \label{palace}
   W^{1-{1\slash p}, p} (\partial \mM, \mN)=\{ u \in W^{1-1\slash p,  p} (\partial \mM, \R^\ell), \ u(x) \in N  {\rm \ for \ almost \ every \ }  x \in \mM \}, 
\end{equation}
   where the space $W^{1-1\slash p,  p} (\partial \mM, \R^\ell)$ is the  standard trace space  of maps from $\partial \mM$   to $\R^\ell$ for which the norm $ \Vert \cdot   \Vert_{1-1\slash p, p}$  is finite. The norm $\Vert u  \Vert_{1-1\slash p, p}$ is given by 
   \begin{equation*}
   \label{norm}
   \Vert u  \Vert_{1-1\slash p, p}=\Vert u \Vert_{L^p(\partial M)}+\lvert  u \rvert_{1-1\slash p, p} 
   \end{equation*}
    where the semi-norm ${\bf \lvert}  \cdot \rvert_{1-1\slash p, p} $ writes 
  \begin{equation}
  \label{seminorm}
\lvert u \rvert_{1-1\slash p, p}  = \left(\int_{\partial M} \int_{\partial M}  \frac {\vert u(x)-u(y) \vert^p}{\vert x-y \vert^{p+m-2 }} {\rm d}x {\rm d}y \right)^{\frac 1 p}.
   \end{equation}
   Given any map in $g$ in $W^{1-1\slash p,  p} (\partial \mM, \R^\ell)$, it is well-known that  there exists an extension $u$ of $g$ to the full domain $\mM$ such that $u \in W^{1, p} (\mM, \R^\ell)$ and $u=g$ on $\partial \mM$ in the sense of  the trace operator.  In the case we assume furthermore that the values of $g$ are  constrained to  belong to  $\mN$ so that  the map $g$ belongs to the space $W^{1-{1\slash p}, p} (\partial \mM, \mN)$,  a natural question, which has already been raised in several places  in the   litterature,  is to determine  wether we may find such an extension $u$ satistying moreover the  
   constraint on the target, that is $u(x) \in \mN$ for almost every $x  \in \mM$. Following  the notation introduced in \cite{BeD} we consider the subset $\mT^p(\partial \mM, \mN)$ of  $W^{1-{1\slash p}, p} (\partial \mM, \mN)$ defined  by 
   $$
 \mT^p_{\rm ext} (\partial \mM, \mN)  \equiv  \{u \in W^{1-{1\slash p}, p} (\partial \mM, \mN) {\rm  \  s.t \ }  \exists \,  U  \in 
    W^{1, p} (\mM, \mN) {\rm \ such \ that  \ }  U=u {\rm \ on \ } \partial \mM\}.  
    $$
    The extension problem for Sobolev mappings then   can be rephrased as: 
   
   \smallskip 
     \emph{$(\mathcal Q)_p$ Under which  conditions on $\mM$, $\mN$ and $p$ do we  have $\mT^p(\partial \mM, \mN)= W^{1-{1\slash p}, p} (\partial \mM, \mN)$?} 
    
   \medskip
    It  follows from Sobolev embedding that in the  case  $p > m=\dim \mM$   that maps in    $W^{1,p} (\mM, \mN)$ are in fact  continuous so that the answer  to question $\mQp$ completely  reduces to the corresponding  extension problem  for continuous maps between $\mM$ and $\mN$, a problem   in topology which might present  significant difficulties, depending on the nature of $\mM$.  The  same  answer holds for the  the\emph{ limiting  case} $p=\dim \mM$  (see Theorems 1 and 2 in \cite{BeD}).   We therefore restrict ourselves to the case $p<m$.  Since the nature of  our results is quite different in the two cases, we need to distinguish    the case when $\mN$ is simply connected  from  the case $\mN$ is not.

    \subsection{Statement of the result in the case $\mN$ is simply connected}
  We assume here that $\mN$ is simply connected, that is 
  \begin{equation}
  \label{simplet}
  \pi_1(\mN)=\{0\}.
  \end{equation}  
    It turns out in the case \eqref{simplet} holds, \emph{somewhat surprisingly}, that  question $(\mathcal Q)_{p}$ has a complete answer which depends only on $p$ and the topological  properties of the target manifold $\mN$.   In order to state our result,  we introduce the integer 
    \begin{equation}
    \label{deftrp}
   \mpc (\mN)=\inf\{\mj \in \N^*, \pi_{\mj}(\mN)\not = \{0\} \}. 
   \end{equation}
    For instance if the manifold $\mN$ is the $n$-dimensional sphere $\S^n$, with $n\geq 2$ so that \eqref{simplet} holds, then  $\mpc(\S^n)=n$. Notice that, since   $\mN$ is assumed to be compact $\mpc (\mN)<+ \infty$, and since it  is assumed to be  simply connected  $1<\mpc (\mN)$.   Our main result in the case \eqref{simplet} holds, which is actually also the main result of this paper,   can be stated as follows: 
     
     \begin{theorem} 
     \label{maintheo}
       Assume that $\mN$ is simply connected, i.e.  $\mpc (\mN)\not = 1$ and   let $1<p<m$. Then  we have
  \renewcommand{\theequation}{${\rm Ext}_p(\mM, \mN)$}
    \begin{equation}
         \label{theglaude}
    \mT^p_{\rm ext}(\partial \mM, \mN)= W^{1-{1\slash p}, p} (\partial \mM, \mN)
  \end{equation} 
      if and only if 
      \renewcommand{\theequation}{\arabic{equation}}
    \addtocounter{equation}{-1}
      \begin{equation}
      \label{thecondition}
      p<\mpc(\mN) +1.
      \end{equation}
 \end{theorem}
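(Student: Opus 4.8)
The plan is to prove the two implications separately, the "if" direction (existence of extensions when $p<\mpc(\mN)+1$) and the "only if" direction (the obstruction when $p\geq\mpc(\mN)+1$).

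\medskip

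\emph{The "if" direction: $p<\mpc(\mN)+1\Longrightarrow$ every trace extends.} Here the key structural fact is that when $p<\mpc(\mN)+1$, i.e. $p-1<\mpc(\mN)$, the homotopy groups $\pi_j(\mN)$ vanish for all $j\leq \lfloor p-1\rfloor$ actually for all $j\le \lceil p\rceil-1$ below the critical range; more precisely one has enough connectivity of $\mN$ that $W^{1,p}$ maps from the relevant skeleta are nicely approximable. The classical strategy, going back to Hardt--Lin and Bethuel, is: first extend $g$ as an $\R^\ell$-valued $W^{1,p}$ map $\tilde U$ on $\mM$; then, using a Fubini/averaging argument over a grid of size $\delta$, modify $\tilde U$ on the $\lfloor p\rfloor$-skeleton of the grid so that it takes values near $\mN$ there (retracting onto $\mN$ via the nearest-point projection in a tubular neighborhood), controlling the $W^{1,p}$ energy on the skeleton by the energy of $\tilde U$; then homogeneously extend cell by cell from the skeleton inward. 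The obstruction to extending from the $(\mpc)$-skeleton to the $(\mpc+1)$-cells is an element of $\pi_{\mpc}(\mN)$, but since $p<\mpc+1$ the relevant skeleton on which one must control the map has dimension $\leq \mpc$, and the extension into cells of dimension $\le\lfloor p\rfloor+1\le \mpc+1$ costs finite $W^{1,p}$ energy precisely because the only homotopically nontrivial obstruction lives one dimension too high to matter for an $L^p$-integrability count. I would carry this out by citing/adapting the Hardt--Lin extension scheme, with the bookkeeping on which $\pi_j$ must vanish being the one genuinely new check.

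\medskip

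\emph{The "only if" direction: $p\geq\mpc(\mN)+1\Longrightarrow$ some trace does not extend.} This is the heart of the paper, and I expect it to be the main obstacle. The strategy is to exhibit an explicit bad boundary datum. Since $\pi_{\mpc}(\mN)\neq\{0\}$, fix a nontrivial class $\gamma\in\pi_{\mpc}(\mN)$, represented by a smooth map $\phi:\S^{\mpc}\to\mN$. Working in a boundary chart, identify a neighborhood of a point of $\partial\mM$ with (a piece of) $\R^{m-1}$, which contains a copy of $\R^{\mpc}$; on this $\R^{\mpc}$ place infinitely many small "bubbles" of $\phi$, shrinking in size and accumulating at a point (or on a lower-dimensional set), arranged so that the total $\Wtrp$ seminorm on $\partial\mM$ is finite — this is where one must compute scaling exponents carefully, and it works precisely in the range $p\ge \mpc+1$ (equivalently $p-1\ge \mpc$), because the $\Wtrp(\partial\mM)$ seminorm of a single bubble of scale $\lambda$ scales like $\lambda^{\,m-1-(p-1)\cdot\frac{\ ?\ }{}}$... the exponent must be arranged so that a summable configuration exists while still carrying nontrivial topology "at the accumulation set". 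Then one must prove that no $W^{1,p}(\mM,\mN)$ extension $U$ can exist: the argument is by contradiction, using that an extension would, by a slicing argument over small spheres/cubes linking the accumulation set inside $\mM$, produce on a.e. small sphere a $W^{1,p}$ (hence by the dimension count $\ge\mpc$ a \emph{continuous}, or VMO) map $\S^{\mpc}\to\mN$ whose homotopy class equals $\gamma^{\,k_i}$ with $k_i\to\infty$ (the topological charge concentrated inside), forcing the $W^{1,p}$-energy of $U$ on shrinking annuli to be bounded below by a constant times $|k_i|$, hence infinite — contradicting $U\in W^{1,p}(\mM,\mN)$. The delicate points, and the ones I would spend most effort on, are: (i) the precise scaling computation making the boundary datum lie in $\Wtrp$ exactly when $p\ge\mpc+1$ and not below; (ii) making the "topological charge is conserved and accumulates" argument rigorous for $W^{1,p}$ (not continuous) maps when $p=\mpc+1$ is the borderline integrability, which requires a careful use of the fact that $W^{1,p}$ on $\mpc$-spheres embeds in VMO and that the $\pi_{\mpc}$-valued degree is well-defined and continuous there; and (iii) ruling out that the charges could "escape to the boundary" or cancel, which uses that $\gamma$ has infinite order or, if it has finite order $q$, arranging the bubbles so that the accumulated charges are $1,1,1,\dots$ in $\pi_{\mpc}(\mN)/(\text{torsion})$ after passing to a suitable quotient, or more robustly working with the $\Z$-valued energy lower bound $\mathfrak E(\gamma^k)\ge c|k|$ coming from the systolic/energy estimate for maps into $\mN$.

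\medskip

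In short: the positive direction is an adaptation of the known grid-extension method with a homotopy-group bookkeeping check; the negative direction requires constructing an explicit self-similar "infinitely bubbled" trace whose finite $\Wtrp$-norm is exactly a borderline scaling miracle at $p=\mpc+1$, and then a quantitative topological-obstruction argument showing any putative extension would have infinite energy. The borderline case $p=\mpc(\mN)+1$ is where both the construction and the obstruction are tightest, and that is the step I expect to be hardest.
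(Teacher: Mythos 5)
Your ``if'' direction is fine (it is exactly the Hardt--Lin extension scheme, which the paper simply cites), so everything hinges on the ``only if'' direction, and there the construction you describe does not work as stated. If the bubbles are rescaled copies of a \emph{fixed} representative $\phi$ of a fixed class $\gamma\in\pi_{\mpc}(\mN)$, then for a bubble of scale $\lambda$ both the $\Wtrp$-seminorm to the power $p$ \emph{and} the cost of extending that bubble scale like $\lambda^{m-p}$: since $p<m$, each bubble admits the homogeneous (cone) extension with one point singularity, of energy $C_\phi\,\lambda^{m-p}$, and gluing these extensions in disjoint half-balls extends any such datum of finite trace norm, no matter how the charges ``accumulate'' at a point. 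So no choice of scales alone can give the obstruction, and the literal ``?'' in your scaling exponent is symptomatic: the mechanism is not a scaling miracle in $\lambda$ but a mismatch in powers of the \emph{charge}. The paper's building block is a unit-scale Lipschitz map whose class is $\sigma^{d}$ with $d=k^{\mpc}$ but whose gradient is only $O(k)$ (pack $d$ unit bubbles of size $d^{-1/\mpc}$), so its trace seminorm to the power $p$ is $O(k^{p-1})$ by interpolation, while any admissible extension must pay at least $c\,k^{p}$; one then glues copies with charges $k_\mi$ and scales $\mr_\mi$ chosen so that $\sum k_\mi^{p-1}\mr_\mi^{m-p}<\infty$ while $\sum k_\mi^{p}\mr_\mi^{m-p}=\infty$, which is possible precisely because of the gap between the exponents $p-1$ and $p$. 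This growing-charge/efficient-representative idea is the missing heart of the argument.

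Two further steps you invoke would also fail as stated. First, the lower bound $\nu(\gamma^{k})\ge c|k|$ is not a general ``systolic'' fact: for $\pi_3(\S^2)=\Z$ one has $\nu(\sigma^{d})\le C|d|^{3/4}$ (Rivi\`ere), so linear growth must be \emph{proved}, and it holds here only because $\mpc$ is the first nontrivial homotopy index, hence $\mN$ is $(\mpc-1)$-connected, its $\mpc$-skeleton is homotopy equivalent to a wedge of spheres, and one can project onto each sphere and use degree, after choosing a generator of infinite order (the paper's Lemma 2.2 and Proposition 1.4); this is exactly where simple connectedness and the definition of $\mpc$ enter. Second, slicing by small $(m-1)$-dimensional spheres linking the accumulation point cannot work in the range $\mpc+1\le p<m$: the restriction of a $W^{1,p}$ extension to such a sphere is in general not VMO, and the homotopy class is neither well defined nor conserved there (point singularities can absorb the charge --- this is precisely the Hardt--Lin regime). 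The paper proves flux conservation only in the critical dimension $m=\mpc+1$, where the integrability exponent matches the dimension of the cylinder slices sitting above \emph{each individual bubble} (the energy divergence is local, bubble by bubble, not detected around the accumulation point), and then treats $m>\mpc+1$ by an explicit dimension-raising construction (cylindrical rotation plus a Fubini argument over half-planes), a step your proposal does not address.
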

           
  We      recall  that the  fact that condition  $\eqref{thecondition}$   is \emph{sufficient} has already been proved  by Hardt and Lin  in \cite{HL}, where  a construction of an extension $U$ for any map $u \in  W^{1-{1\slash p}, p} (\partial \mM, \mN)$ is provided. The main result of this paper is hence the proof that the condition is also \emph{necessary}.  This  amounts, in the case $m>p> \mpc (\mN)+1$,   to construct a map in  $W^{1-{1\slash p}, p} (\partial \mM, \mN)$ which 
  cannot be extended as a $W^{1,p}(\mM, \mN)$ map.   Several earlier results have already pointed out  such obstructions in  various examples. 
   For instance,   it is shown in \cite{HL, BeD} that the existence of topological singularities  for maps in $\trspace$  in the case $\pi_{[p-1]}(\N)\not =\{0\}$ provides such obstructions to the extension.  In \cite{BeD}, the result of  Theorem \ref{maintheo} is proved in the case the target is the circle $\mN=\S^1$ (which is or course not simply connected\footnote{However  the proof of Theorem \ref{maintheo} provided in this paper carries over to this special case, see the discussion in subsection \ref{notsimply}}).
The obstruction there does not involve topological singularities and relies on \emph{lifting }  properties of $\S^1$-valued maps. 
 
       We  emphasize that the topology of $\mM$ does not  enter in the statement, in contrast with the case $p\geq m$ discussed before, for which the topology of $\mM$ might be an additional  source of obstructions. 
  As a matter of fact, the   core of our argument does not involve the topology of the domain and readily  deals with the case where  $\partial \mM \subset \R^{m-1}$, with a map which is constant off  the standard ball $\B^{m-1}$. More precisely, we prove:
  
  \begin{proposition}
  \label{mainprop}
  Assume that $\mpc (\mN)\not = 1$ and that   $\rmc\equiv \mpc(\mN) +1\leq p<m$.  There exists a map $\mathfrak u_{\rm obst}$  such that $\uobst=\mathfrak q_0$ on $\R^{m-1} \setminus \B^{m-1}$, where $\mq_0$ is an arbitrary point on $\mN$,   such that 
  \begin{equation}
  \uobst-\mq_0 \in W^{1-1\slash p, p} (\R^{m-1}, \R^\ell)  \  {\rm \ and \ } u(x) \in \mN {\rm \ for \ a. e.  \  } x \in \R^{m-1},  
  \end{equation}
    and such that there exist no map $U$  in  $W^{1,p} (\B^{m-1}\times [0,1], \mN)$  satisfying 
    $$U(\cdot, 0)=\uobst (\cdot)   {\rm \ on \ } \B^{m-1}\times \{0\}  {\rm \ in \ the \ sense \ of \ traces. }$$ 
\end{proposition}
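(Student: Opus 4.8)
The plan is to produce $\uobst$ carrying a \emph{topological defect valued in $\pi_n(\mN)$}, where $n:=\mpc(\mN)\geq 2$, so that (i) $\uobst-\mathfrak q_0$ has finite $W^{1-1/p,p}$–norm and $\uobst\in\mN$ a.e., and (ii) no $\Wunp$ map on $\B^{m-1}\times[0,1]$ can ``heal'' that defect at finite energy. Everything is organised around the threshold $n+1$, which enters in two opposite ways. On the one hand, a map in $W^{1,p}(\B^{n+1},\mN)$ with $p\geq n+1$ has a well-defined and \emph{trivial} homotopy class on the bounding sphere $\S^n=\partial\B^{n+1}$ (because $W^{1,p}(\B^{n+1},\mN)\hookrightarrow C^0$ for $p>n+1$ and $\hookrightarrow\mathrm{VMO}$ for $p=n+1$, and such maps restrict to null–homotopic maps on bounding spheres); quantitatively, any $w:\S^n_\rho\to\mN$ of non-trivial class satisfies $\int_{\S^n_\rho}|\nabla w|^n\geq\eps_0>0$, so that a codimension‑$(n+1)$ singularity of $W^{1,p}$–type costs $\gtrsim\int_0^{\rho_0}\rho^{\,n-p}\,d\rho=+\infty$ as soon as $p\geq n+1$. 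On the other hand, the model singular map $z\mapsto\phi(z/|z|)$ (where $\phi:\S^n\to\mN$ represents a fixed non-trivial class of $\pi_n(\mN)$) still lies in the \emph{trace} space: its $W^{1-1/p,p}$–semi-norm over a unit box is, up to a constant, $\int_0^1\rho^{\,n+1-p}\,d\rho$, finite whenever $p<n+2$. The value $\mathfrak q_0$ off $\B^{m-1}$ is secured by taking the defect to be a \emph{dipole}, i.e.\ of vanishing total $\pi_n(\mN)$–charge.

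Concretely, I would first treat the range $n+1\leq p<n+2$. Since $p<m$ forces $m-1\geq n+1$, fix two parallel $(m-n-2)$–dimensional disks $D^{+},D^{-}\subset\B^{m-1}$, close together and spanning the plane transverse to the short segment joining them, and let $\uobst$ coincide, in the $(n+1)$ normal coordinates $z$ of a tubular neighbourhood of $D^{\pm}$, with $\phi(z/|z|)$ (resp.\ its orientation reversal), interpolated back to $\mathfrak q_0$ away from $D^{+}\cup D^{-}$; the interpolation exists precisely because the two legs carry the opposite classes $\pm[\phi]$, so the combined boundary datum is null–homotopic. The scaling identity above then gives $\uobst-\mathfrak q_0\in W^{1-1/p,p}(\R^{m-1},\R^\ell)$ exactly in this range. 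The step where I expect the real work to concentrate is the remaining range $n+2\leq p<m$, where a single defect is no longer admissible and the defect must be smeared over all dyadic scales: I would define $\uobst$ as a self-similar superposition of rescaled dipoles of the above kind, one dipole of size $2^{-k}$ per cell of a dyadic family, so that a scale‑$k$ layer contributes $N_k\,C\,2^{-k(m-p)}$ to the $p$‑th power of the semi-norm ($N_k$ = number of cells), and since $m-p>0$ one can keep $\sum_k N_k\,2^{-k(m-p)}<\infty$ while arranging that, at every scale, a \emph{positive-measure} family of round $n$–spheres $\Sigma$ — level spheres $\{|z|=\rho\}$ about some defect leg, on which $\uobst|_{\Sigma}=\phi$ up to reparametrisation, hence of class $\pm[\phi]\neq0$ — survives to the limit map. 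Making this bookkeeping close (energy summability versus robust persistence of the degree) is the main obstacle.

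With such a $\uobst$ in hand, suppose for contradiction that $U\in\Wunp(\B^{m-1}\times[0,1],\mN)$ has trace $\uobst$ on $\B^{m-1}\times\{0\}$. For an $(n+1)$–dimensional cap $C\subset\B^{m-1}\times[0,1]$ — e.g.\ the half-sphere $C_{\rho,y_0}=\{(z,y_0,t):|z|^2+t^2=\rho^2,\ t\geq0\}$ in coordinates adapted to a defect leg, whose boundary is one of the spheres $\Sigma$ above — the restriction $U|_{C}$, when defined, lies in $W^{1,p}(C,\mN)\hookrightarrow\mathrm{VMO}(C,\mN)$ since $p\geq n+1$, hence its boundary trace $U|_{\partial C}$ is null–homotopic. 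As $(\rho,y_0)$ range over an $(m-n-1)$–parameter family the caps $C_{\rho,y_0}$ foliate a neighbourhood of the leg, so a Fubini argument produces a positive-measure set of $(\rho,y_0)$ for which $U|_{C_{\rho,y_0}}\in W^{1,p}$ and (compatibility of traces under generic slicing) $U|_{\partial C_{\rho,y_0}}=\uobst|_{\Sigma}$; for those parameters $\uobst|_{\Sigma}$ must be null–homotopic, contradicting $[\uobst|_{\Sigma}]=\pm[\phi]\neq0$. Heuristically, this says the defect of $\uobst$ propagates into an $(m-n-1)$–dimensional singular set of $U$ meeting $\B^{m-1}\times\{0\}$, around which small linking $n$–spheres carry a non-zero class; combining $\int_{\S^n_\rho}|\nabla U|^n\geq\eps_0$ with H\"older, such a set forces $\int_{\B^{m-1}\times[0,1]}|\nabla U|^p\gtrsim\int_0^{\rho_0}\rho^{\,n-p}\,d\rho=+\infty$, incompatible with $U\in\Wunp$.

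Finally, a word on why condition \eqref{thecondition} is the right one, and on the paper's viewpoint. For $p<n+1$ both uses of the threshold flip sign: the model defect is itself \emph{fillable} by a finite-energy $W^{1,p}$ bulk defect, and $\int_0^{\rho_0}\rho^{\,n-p}\,d\rho$ converges, so the obstruction collapses — consistently with the extensions built by Hardt and Lin in \cite{HL}. One may also phrase the obstruction as a lifting statement: healing the defect of $\uobst$ amounts to lifting the putative extension $U$ through the $n$–connected cover of $\mN$ (the Postnikov stage killing $\pi_n(\mN)$), and the obstruction is the non-vanishing of the associated $\pi_n(\mN)$–valued class — the precise analogue, for simply connected $\mN$, of the failure of lifting to the universal cover exploited in \cite{BeD} for $\mN=\S^1$.
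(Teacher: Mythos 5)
Your plan splits into two regimes, and only the first one is sound. For $\mpc+1\leq p<\mpc+2$ the single--dipole construction (a $\pi_{\mpc}(\mN)$--valued defect of codimension $\mpc+1$ in $\R^{m-1}$, with $|u_{\rm dip}|_{1-1/p,p}<\infty$ exactly because $(1-1/p)p=p-1<\mpc+1$, plus the cap--slicing/VMO argument in the cylinder) is a legitimate obstruction, but it is essentially the known topological--singularity obstruction via $\pi_{[p-1]}(\mN)\neq\{0\}$ of Hardt--Lin and Bethuel--Demengel recalled in the introduction; it does not reach the genuinely new range $\mpc+2\leq p<m$, which is where Proposition \ref{mainprop} has its real content.

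In that range your dyadic--smearing step has a concrete gap. The bookkeeping ``a scale-$k$ layer contributes $N_k\,C\,2^{-k(m-p)}$'' invokes the scaling law $\lvert u_r\rvert_{1-1/p,p}^p=r^{m-p}\lvert u\rvert_{1-1/p,p}^p$, but the model dipole $\phi(z/|z|)$ has $\lvert u\rvert_{1-1/p,p}=+\infty$ precisely when $p\geq\mpc+2$, so the ``constant'' $C$ is infinite and no choice of $N_k$ makes the layers summable. Moreover the defect cannot be repaired within a purely topological scheme: once $p-1\geq\mpc+1$, a map in $\Wtrp(\R^{m-1},\mN)$ restricted to almost every $(\mpc+1)$--dimensional disk of a generic foliating family belongs to $W^{1-1/p,p}$ of the disk, hence to VMO, and therefore almost every linking $\mpc$--sphere carries a \emph{trivial} class; the ``positive-measure family of nontrivially wrapped $n$--spheres surviving to the limit'' that your contradiction argument requires simply does not exist in the trace space. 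This is exactly why the paper's proof is quantitative rather than topological: it uses \emph{Lipschitz} building blocks $\mathfrak U_m^k$ of large homotopy class $\upsigma^{d}$ with $d=k^{\mpc}$, the flux identity through the slices $\Lambda^{m-1}(r)$ (Lemma \ref{selecta}, Proposition \ref{surfaces}), and -- crucially -- the linear lower bound $\upnu_{_{\mpc}}(\upsigma^{d})\geq C|d|$ of Proposition \ref{clefkey}, which needs the $(\mpc-1)$--connectedness of $\mN$ (Hurewicz/skeleton projection; it can fail for non-minimal homotopy groups, cf.\ Rivi\`ere's example for $\pi_3(\S^2)$). This yields $\Ext_{m,p}(\mathfrak U_m^k)\geq C k^{p}$ while $\Vert\mathfrak U_m^k-\mq_0\Vert_{1-1/p,p}^p\leq C k^{p-1}$, and the mismatch of exponents is what allows the infinite gluing at shrinking scales to have finite trace norm but infinite extension energy. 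None of this quantitative mechanism (growing degrees, the $\upnu\gtrsim|d|$ bound, the $k^{p-1}$ versus $k^{p}$ gap) appears in your proposal, so the case $\mpc+2\leq p<m$ remains unproved.
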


   Theorem \ref{maintheo} is then deduced in a rather direct way from Proposition \ref{mainprop}. 
\subsection{The case $\mN$  is not simply connected}   
\label{notsimply}
We discuss in this paragraph the case when   $\mN$ is not simply connected, that is 
\begin{equation}
\label{notsimply}
\pi_1(\mN)\not = \{0\}.
\end{equation}
Several results of  topological flavor which enter in the proof of Proposition \ref{mainprop}   do not extend  to  the case $\mN$ is not simply connected,  this is in particular the case for  the  Hurewicz  isomorphism theorem, which is involved in some of our topological arguments.  It turns out that the case the manifold $\mN$ is simply connected is strongly related to properties and the nature  of the universal covering $\mN_{\rm cov}$  of $\mN$ as well as the  lifting property for Sobolev maps.   Let 
$$\Pi: \mNcov \to \mN$$
 denote  the covering map. If $\pi_1(\mN)=\{0\}$, then $\mNcov=\mN$ and $\Pi$ is the identity. The universal covering is always simply connected, that is $\pi_1(\mNcov) = \{0\}, $  so that 
$2 \leq \mpc(\mNcov) \leq +\infty$.  The simplest example is provided by the case    $\mN=\S^1$,  for which 
 $\pi_1(\S^1)=\Z$. In this example the universal covering is given by $\mNcov=\R$ and hence is not compact. The covering map is the exponential map  given by $\Pi(\theta)=\exp i \theta$ for $\theta \in \R$. Another classical example is given by the Lie group of rotations of the three-dimensional space $\mN=SO(3)$, for which 
 $\pi_1(SO(3))=\Z_2$. Here the covering space is  the group $SU(2)$, which,  in contrast to the first example,  is  compact.  As a matter of fact, an important observation is that $\mNcov$ is a \emph{compact Riemannian manifold} if  and only if  $\pi_1(\mN)$ is \emph{a finite group}.

\smallskip 
Given $p>1$, we say that a map $u \in \Wtrp (\partial \mM, \mN)$ is \emph{liftable} if and only if 
there exists a map $\varphi  \in \Wtrp (\partial \mM, \mNcov)$ such that 
\begin{equation}
\label{liftable}
u=\Pi \circ \varphi, 
\end{equation}
and that property $\Liftp (\partial \mM, \mN)$ holds if and only if every map $u \in \Wtrp (\partial \mM, \mN)$  is liftable.   A first elementary observation which stresses the close relationship    between  the lifting property $\Liftp (\partial \mM, \mN)$ and the extension problem is given  in the following result:

\begin{lemma}
\label{drouot}
Assume that $p\geq 2$ and that $\mM$  and $\partial \mM$ are simply connected.  If  the extension  property \ref{theglaude} holds, 
 then  the lifting  property $\Liftp (\partial \mM, \mN)$ holds also. 
 \end{lemma}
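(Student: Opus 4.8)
The plan is to exploit the extension property in order to replace the (subtle) lifting problem for the fractional–order space $\Wtrp(\partial\mM,\mN)$ by a lifting problem for the integer–order space $W^{1,p}(\mM,\mN)$, for which a robust theory is available as soon as $p\geq 2$. Let $u\in\Wtrp(\partial\mM,\mN)$ be arbitrary; we must produce some $\varphi\in\Wtrp(\partial\mM,\mNcov)$ with $u=\Pi\circ\varphi$. Since the extension property \eqref{theglaude} is assumed to hold, there exists $U\in W^{1,p}(\mM,\mN)$ whose trace on $\partial\mM$ is $u$.

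The heart of the argument is the following lifting statement for $W^{1,p}$ maps on a simply connected domain, which I would invoke rather than reprove: if $X$ is a compact simply connected Riemannian manifold, possibly with boundary, and $p\geq 2$, then every $V\in W^{1,p}(X,\mN)$ admits a lift $\Phi\in W^{1,p}(X,\mNcov)$ with $\Pi\circ\Phi=V$. For $\mN=\S^1$ this is a theorem of Bourgain, Brezis and Mironescu, where it reduces to the fact that the $L^p$ one–form $u\wedge\mathrm{d}u$ is distributionally closed, hence exact on the simply connected $X$, which produces the phase; for general $\mN$ it is due to Bethuel and Chiron. The restriction $p\geq 2$ is precisely what makes it work, since for $p<2$ there are genuine topological obstructions to lifting even over simply connected domains — this is where the hypothesis $p\geq 2$ of the lemma is consumed. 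Observe also that $\Pi$ is a local isometry, so $|\mathrm{d}\Phi|=|\mathrm{d}(\Pi\circ\Phi)|$ pointwise; together with the Poincar\'e inequality on the bounded domain $X$ this shows $\Phi\in W^{1,p}(X,\R^{\ell'})$ for any proper isometric embedding $\mNcov\hookrightarrow\R^{\ell'}$, regardless of whether $\pi_1(\mN)$ is finite, i.e. of whether $\mNcov$ is compact.

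The hypotheses on $\mM$ and $\partial\mM$ are used to put ourselves in the setting of this statement. The cleanest route is to pass to the closed double $\widehat\mM=\mM\cup_{\partial\mM}\mM$: by van Kampen's theorem (iterating over the components of $\partial\mM$ if necessary) one has $\pi_1(\widehat\mM)=\pi_1(\mM)\ast_{\pi_1(\partial\mM)}\pi_1(\mM)=\{0\}$ exactly because \emph{both} $\mM$ and $\partial\mM$ are simply connected, so $\widehat\mM$ is a closed simply connected manifold; one extends $U$ to $\widehat U\in W^{1,p}(\widehat\mM,\mN)$ by reflection across $\partial\mM$, applies the lifting statement to $\widehat U$, and restricts the lift to $\mM$. (Alternatively, one applies the statement directly to $X=\mM$, which is already compact and simply connected.) Either way one obtains $\Phi\in W^{1,p}(\mM,\mNcov)$ with $\Pi\circ\Phi=U$ almost everywhere on $\mM$.

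It remains to take traces. The trace operator is bounded from $W^{1,p}(\mM,\R^{\ell'})$ to $\Wtrp(\partial\mM,\R^{\ell'})$; the trace of a map whose values lie a.e. in the closed submanifold $\mNcov\subset\R^{\ell'}$ still has its values a.e. in $\mNcov$; and composition with the Lipschitz map $\Pi$ commutes with taking traces. Hence $\varphi:=\Phi|_{\partial\mM}\in\Wtrp(\partial\mM,\mNcov)$ satisfies $\Pi\circ\varphi=(\Pi\circ\Phi)|_{\partial\mM}=U|_{\partial\mM}=u$, so $u$ is liftable; since $u$ was arbitrary, $\Liftp(\partial\mM,\mN)$ holds. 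The only non–soft ingredient — and the one I expect to be the main obstacle in any self-contained treatment — is the $W^{1,p}$–lifting theorem over simply connected domains for $p\geq 2$; the rest (the extension hypothesis, van Kampen, boundedness of the trace and its commutation with Lipschitz maps) is routine.
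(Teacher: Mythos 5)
Your proposal is correct and follows essentially the same route as the paper: use the extension property to produce $U\in W^{1,p}(\mM,\mN)$ with trace $u$, lift $U$ via the $W^{1,p}$ lifting theorem for $p\geq 2$ (Bethuel--Chiron), and take the trace of the lift. The detour through the closed double is unnecessary embellishment — the paper applies the lifting theorem directly on $\mM$, as your parenthetical alternative does.
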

\begin{proof}
The proof relies on  the fact that the lifting property holds in the space $W^{1,p} (\mM,\mN)$ for $p\geq 2$,   that is given an arbitrary map $U \in W^{1,p} (\mM,\mN)$, there exists some $\Phi \in W^{1,p} (\mM,\mNcov)$ such that $U=\Pi \circ \Phi$ (see e.g. 
\cite{BeChi} Theorem 1 or \cite{PR}).  Since we assume  that the spaces  $\displaystyle{\mT_{\rm ext}^p(\partial \mM, \mN)}$ and $\displaystyle{W^{1-{1\slash p}, p} (\partial \mM, \mN)}$ coincide, it follows for  any map $u \in    \Wtrp (\mM, \mN)$  there exists a map $U$ in $W^{1,p} (\mM,\mN)$ such that $U=u$ on $\partial \mM$.
 Since $U=\pi \circ \Phi$, it follows in  view of the trace Theorem,  that 
$$ u=\pi \circ \varphi,$$
 where $\varphi$ is the trace of $\Phi$ on the boundary $\partial \mM$, so that $u$ possesses a lifting. The conclusion hence follows.  
\end{proof}

\smallskip
Lemma \ref{drouot}   shows  that  obstructions to the lifting property yield obstructions to the extension problem. The idea to use obstructions to liftings  was introduced    first in \cite {BeD}  to prove that in the special case $\mN=\S^1$,  the answer to the extension problem is negative for $ 3\leq  p<m$. The obstruction to the lifting property was then generalized in \cite{BBM} in the general setting of $W^{s, p} (\mN, \S^1)$  maps,   showing   that, turning back to our central problem\footnote{The  may also check that the construction introduced  in  the proof of Proposition \ref{mainprop} can be carried over to the special case $\mN=\S^1$, $2\leq p <m$, yielding hence an alternate proof}
\footnote{in the range $2\leq p <3$, the construction in \cite{HL} yields another obstruction},  obstructions  to extensions  appear for the exponents  $2\leq p<m$.   As matter of fact, this type of obstruction might be generalized to the case the 
$\mNcov$ is not compact, that is when $\pi_1(\mN)$ is infinite.  We have:

\begin{theorem}
\label{deux}  Assume that $\pi_1(\mN)$ is infinite.  Then  the extension property  \eqref{theglaude}  \emph{does not hold} for  $2\leq p<m$.  
\end{theorem}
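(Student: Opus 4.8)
\smallskip
\noindent\emph{Proposed approach.} The plan is to deduce the theorem from the lifting obstruction of Lemma~\ref{drouot}, by exhibiting a single explicit non-liftable trace map, built from a minimising geodesic ray in the (necessarily non-compact) universal cover. Since $2\le p<m$ forces $m\ge 3$, the ball $\B^m$ and its boundary $\S^{m-1}$ are both simply connected, so Lemma~\ref{drouot} applies with $\mM=\B^m$: were \eqref{theglaude} to hold for $\B^m$, then $\Liftp(\S^{m-1},\mN)$ would hold as well. It therefore suffices to produce one map $u\in\Wtrp(\S^{m-1},\mN)$ that is not liftable, i.e. for which there is no $\varphi\in\Wtrp(\S^{m-1},\mNcov)$ with $u=\Pi\circ\varphi$.

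To build $u$, I would first use that, $\pi_1(\mN)$ being infinite, the covering $\mNcov\to\mN$ has infinitely many sheets, so $\mNcov$ is a complete, non-compact Riemannian manifold and hence carries a \emph{minimising} unit-speed geodesic ray $\tilde\Gamma\colon[0,\infty)\to\mNcov$ issuing from a lift $\tilde q_0$ of a base point $q_0\in\mN$; thus $\dist_{\mNcov}(\tilde\Gamma(s),\tilde\Gamma(t))=|s-t|$ for all $s,t\ge 0$, the projection $\Gamma=\Pi\circ\tilde\Gamma$ is $1$-Lipschitz with $\Gamma(0)=q_0$, and $\tilde\Gamma$ is the continuous lift of $\Gamma$ through $\tilde q_0$. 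Since $p<m$, the window $\frac{m-p}{p}<\alpha<\frac{m-p}{p-1}$ is non-empty; I would fix such an $\alpha$, a point $x_0\in\S^{m-1}$, a small $\delta>0$, write $r(x)=\dist_{\S^{m-1}}(x,x_0)$, and set $u(x)=\Gamma(r(x)^{-\alpha})$ for $r(x)\le\delta$, join $\Gamma(\delta^{-\alpha})$ to $q_0$ by a fixed Lipschitz path on $\delta\le r(x)\le 2\delta$, and put $u\equiv q_0$ elsewhere. The point is that $u\in\Wtrp(\S^{m-1},\mN)$: the estimate of the seminorm \eqref{seminorm} is the classical one showing that the radial $\S^1$-valued map $x\mapsto e^{i\,r(x)^{-\alpha}}$ lies in $W^{1-1/p,p}$, it uses only that $\Gamma$ is $1$-Lipschitz with bounded image, and it yields exactly the threshold $\alpha<\frac{m-p}{p-1}$.

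Next I would prove non-liftability. Assume $u=\Pi\circ\varphi$ with $\varphi\in\Wtrp(\S^{m-1},\mNcov)$. On each geodesic sphere $S_\rho=\{r=\rho\}$, $0<\rho<\delta$, a connected $(m-2)$-sphere, $u$ is the constant $\Gamma(\rho^{-\alpha})$, so $\Pi\circ(\varphi|_{S_\rho})$ is constant; a Fubini argument in geodesic polar coordinates around $x_0$ gives, for a.e.\ $\rho$, that $\varphi|_{S_\rho}$ lies in $W^{\sigma,p}(S_\rho,\mNcov)$ for every $\sigma<1-1/p$ (and, for $p=2$, even for $\sigma=\tfrac12$, by Plancherel), and since $p\ge 2$ one may choose such a $\sigma$ with $\sigma p\ge 1$. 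A $W^{\sigma,p}$ map with $\sigma p\ge 1$ on a connected domain that takes values in a discrete set must be constant (a non-trivial characteristic function would have infinite fractional $(\sigma,p)$-seminorm); as $\varphi|_{S_\rho}$ takes values in the discrete fibre $\Pi^{-1}(\Gamma(\rho^{-\alpha}))$, it is a.e.\ equal to a single point $\hat\varphi(\rho)$. A co-area estimate then places $\rho\mapsto\hat\varphi(\rho)$ in $W^{1-1/p,p}_{\mathrm{loc}}((0,\delta),\mNcov)$, and since $(1-\tfrac1p)p=p-1\ge1$, in one dimension this profile is locally in $\mathrm{VMO}$ and has no jumps; projecting to the continuous curve $\rho\mapsto\Gamma(\rho^{-\alpha})$, it must be its continuous lift, $\hat\varphi(\rho)=g_0\cdot\tilde\Gamma(\rho^{-\alpha})$ for a fixed deck transformation $g_0$. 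Because $\tilde\Gamma$ is minimising, $\dist_{\mNcov}(\tilde q_0,\hat\varphi(\rho))=\rho^{-\alpha}+O(1)$, so the radial profile of $\varphi$ grows like $r^{-\alpha}$ with $\alpha>\frac{m-p}{p}$; this is precisely the borderline-excluded case of the estimate used above, whence $\lvert\varphi\rvert_{1-1/p,p}=+\infty$, contradicting $\varphi\in\Wtrp(\S^{m-1},\mNcov)$. Hence $u$ is not liftable, and by Lemma~\ref{drouot} the extension property \eqref{theglaude} fails, for every $2\le p<m$.

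The step I expect to be delicate is the last one: the fractional-order bookkeeping in the polar-coordinate Fubini argument must be pushed far enough to justify both that $\varphi$ is a.e.\ constant on a.e.\ sphere $S_\rho$ — where $p\ge 2$ enters through $\sigma p\ge 1$, alongside its use in Lemma~\ref{drouot} — and that the resulting radial profile cannot jump between sheets; and the two quantitative Gagliardo estimates then have to be reconciled, the upper one placing $\Gamma(r^{-\alpha})$ in $W^{1-1/p,p}$ (requiring $\alpha<\frac{m-p}{p-1}$) against the lower one obstructing any lift (requiring $\alpha>\frac{m-p}{p}$), which is exactly what makes the window non-empty only for $p<m$. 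A further, more technical point is to fix the meaning of $\Wtrp(\partial\mM,\mNcov)$ for a possibly non-compact target — preferably through the intrinsic Riemannian distance on $\mNcov$, so that the blow-up $\dist_{\mNcov}(\tilde q_0,\varphi|_{S_\rho})\asymp r^{-\alpha}$ genuinely contradicts membership in that space irrespective of the large-scale geometry of $\mNcov$ — together with the routine cut-off of $u$ near $\partial\B^m$.
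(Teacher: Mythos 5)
Your overall architecture is the same as the paper's: reduce Theorem \ref{deux} to the failure of the lifting property via Lemma \ref{drouot}. The difference is in how the non-liftable trace map is obtained. The paper simply cites Proposition 2 of \cite{BeChi}, which already asserts, for $1\le sp<m-1$ and $\pi_1(\mN)$ infinite, the existence of $u\in W^{s,p}(\partial\mM,\mN)$ admitting no lifting to $\mNcov$; applied with $s=1-1/p$ (so $sp=p-1\ge 1$ exactly when $p\ge 2$) this gives the failure of $\Liftp(\partial\mM,\mN)$, and Lemma \ref{drouot} concludes. You instead reconstruct that counterexample from scratch: a minimizing ray $\tilde\Gamma$ in the non-compact cover, the map $u=\Gamma(r^{-\alpha})$ with $\frac{m-p}{p}<\alpha<\frac{m-p}{p-1}$, sheet-rigidity on a.e.\ sphere via $\sigma p\ge 1$, and the growth of the radial profile of any putative lift. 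This is essentially the known mechanism behind \cite{BeChi} (itself a generalization of the $\S^1$ example of \cite{BBM}), with the correct thresholds and the correct identification of where $p\ge 2$ and $p<m$ enter; what your version buys is a self-contained proof making the obstruction explicit, at the price of having to carry out the technical slicing steps you flag (restriction to a.e.\ sphere with an arbitrarily small loss of smoothness, the Plancherel fix at $p=2$ where $(1-1/p)p=1$ is borderline, the VMO no-jump argument across the uniformly separated sheets, and the intrinsic definition of $\Wtrp(\partial\mM,\mNcov)$ for a non-compact target) — these are exactly the content of the cited proposition and are standard but not free.

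Two small points of scope. First, as written you conclude the failure of \eqref{theglaude} only for $\mM=\B^m$; since your map is supported near a point of $\S^{m-1}$ and the contradiction is local, it transplants to a geodesic ball of $\partial\mM$ exactly as in Section \ref{black}, so this is cosmetic, but it should be said. Second, both your argument and the paper's rest on Lemma \ref{drouot} and hence implicitly on the simple connectedness of $\mM$ and $\partial\mM$, a hypothesis not displayed in the statement of Theorem \ref{deux}; you are no worse off than the paper here, but you should state the assumption where you invoke the lemma.
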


In other words, the non-existence  part\footnote{which, as mentioned, is the main contribution of this paper}  of Theorem \ref{maintheo} remains valid in the case  $\mN$ is simply connected, provided the fundamental group is infinite.  Notice that  Theorem \ref{deux} does not cover the case $1\leq p <2$:  This leads to a first open question, namely prove (or disprove)  property \ref{theglaude} when 
 \begin{equation*}
\label{open1}
   \pi_1(\mN) {\rm \ is  \ infinite \  and\  non \ trivial \ } {\rm  \ and \ }  1\leq p <2\leq m.
   \leqno{({\mathcal O1})}
\end{equation*}
Let us actually mention that when $1\leq p <2\leq m$, then the lifting property $\Liftp(\partial \mM, \mN)$ holds (see Theorem 3 in  \cite{BeChi}). It would be tempting to conclude, in view of the construction of \cite{HL}   it that case,  that 
the answer is positive. However,  since $\mNcov$ is not compact, the  adaptation of the Hardt-Lin method   does not seem straightforward.  \\

 We finally turn to the case $\pi_1(\mN)$ is finite and non trivial. In this case also, we have only partial results.   We set 
$$ \tilde {\mathfrak p}_{\rm c}(\mN)= \mpc (\mN)=\inf\{\mj \in \N^* \setminus \{1\}, \pi_{\mj}(\mN)\not = \{0\} \}$$
 
 Since the homotopy groups of $\mNcov$ of order higher to 2 are equal to the homotopy groups of $\mN$ we actually have $\tilde {\mathfrak p}_{\rm c}(\mN)=\mpc(\mNcov)$.

\begin{theorem} 
\label{trois}
Assume that $\pi_1(\mN)$ is finite and non trivial.  

i)The extension property \ref{theglaude} \emph{does not hold} in the following two cases:
\begin{itemize} 
\item  $\tilde {\mathfrak p}_{\rm c}(\mN)+1 \leq p <m$.
\item
 $\displaystyle{2\leq p<3\leq m}$.
\end{itemize}

ii)  The extension property \eqref{theglaude} holds  if
$1\leq p <2$.

iii) If 
$3\leq p <\tilde {\mathfrak p}_{\rm c}(\mN)+1<m$, then  the extension property \eqref{theglaude} holds   \emph{if and only if}  the lifting property $\Liftp (\partial \mM, \mN)$ holds.
\end{theorem}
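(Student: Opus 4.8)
I would treat the four regimes of Theorem \ref{trois} separately, the common device being to pass to the \emph{compact} simply connected universal cover $\mNcov$ of $\mN$ (recall $\pi_1(\mN)$ finite forces $\mNcov$ compact, and $\tilde {\mathfrak p}_{\rm c}(\mN)=\mpc(\mNcov)\ge 2$), on which both Proposition \ref{mainprop} and the Hardt--Lin extension construction of \cite{HL} are available. For the two \emph{positive} statements — part ii), and the implication from $\Liftp(\partial\mM,\mN)$ to \eqref{theglaude} in part iii) — the plan is to lift and extend upstairs: given $u\in\Wtrp(\partial\mM,\mN)$, the lifting hypothesis (automatic for $1\le p<2$ by Theorem 3 of \cite{BeChi}, and assumed outright in part iii)) yields $\varphi\in\Wtrp(\partial\mM,\mNcov)$ with $u=\Pi\circ\varphi$; since $p<\mpc(\mNcov)+1$ in both regimes (as $p<2\le\mpc(\mNcov)$ in part ii), and $p<\tilde {\mathfrak p}_{\rm c}(\mN)+1=\mpc(\mNcov)+1$ in part iii)), the Hardt--Lin construction applied to the compact simply connected target $\mNcov$ produces $\Phi\in W^{1,p}(\mM,\mNcov)$ with trace $\varphi$ on $\partial\mM$, whence $U:=\Pi\circ\Phi\in W^{1,p}(\mM,\mN)$ extends $u$, which is \eqref{theglaude}. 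The reverse implication of part iii), from \eqref{theglaude} to $\Liftp(\partial\mM,\mN)$, is exactly Lemma \ref{drouot}, applicable since $p\ge 3\ge 2$.

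For the negative statement of part i) in the range $2\le p<3\le m$, I would note that then $[p-1]=1$, so $\pi_{[p-1]}(\mN)=\pi_1(\mN)\ne\{0\}$, and invoke the classical obstruction coming from topological singularities of maps in $\Wtrp(\partial\mM,\mN)$ recalled in the introduction after \cite{HL, BeD}, which exhibits a map with no extension in $W^{1,p}(\mM,\mN)$; equivalently, in this range $\Liftp(\partial\mM,\mN)$ fails (the critical product $(1-1/p)p=p-1$ lies in $[1,2)$), and Lemma \ref{drouot} converts this into the failure of \eqref{theglaude}.

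The substantial point is the negative statement of part i) for $\tilde {\mathfrak p}_{\rm c}(\mN)+1\le p<m$, where I would carry the obstruction of Proposition \ref{mainprop} across the covering. I would apply Proposition \ref{mainprop} with target $\mNcov$ — legitimate because $\mpc(\mNcov)\ne1$ and $\mpc(\mNcov)+1\le p<m$ — obtaining a map $\uobst\colon\R^{m-1}\to\mNcov$, constant off $\B^{m-1}$, in the relevant trace space, with \emph{no} extension in $W^{1,p}(\B^{m-1}\times[0,1],\mNcov)$. Setting $v:=\Pi\circ\uobst\colon\R^{m-1}\to\mN$, which lies in $\Wtrp(\R^{m-1},\mN)$ and is constant off $\B^{m-1}$, it suffices — after patching into $\mM$ exactly as Theorem \ref{maintheo} is deduced from Proposition \ref{mainprop} — to show that $v$ has no extension in $W^{1,p}(\B^{m-1}\times[0,1],\mN)$. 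Suppose $U\in W^{1,p}(\B^{m-1}\times[0,1],\mN)$ extended $v$. Since $\mNcov$ is compact and $p\ge\mpc(\mNcov)+1\ge3\ge2$, the $W^{1,p}$-lifting theorem (\cite{BeChi}, Theorem 1, or \cite{PR}) would give $\Phi\in W^{1,p}(\B^{m-1}\times[0,1],\mNcov)$ with $U=\Pi\circ\Phi$; its trace $\psi$ on $\B^{m-1}\times\{0\}$ obeys $\Pi\circ\psi=\mathrm{tr}\,U=\Pi\circ\uobst$, so $\psi$ and $\uobst$ are two liftings of the same $\mN$-valued map over the connected set $\B^{m-1}$, hence $\psi=\sigma\circ\uobst$ for a single element $\sigma$ of the finite deck group (uniqueness of liftings up to a deck transformation). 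Since $\sigma$ is an isometry of $\mNcov$, the map $\sigma^{-1}\circ\Phi\in W^{1,p}(\B^{m-1}\times[0,1],\mNcov)$ has trace $\sigma^{-1}\circ\psi=\uobst$ on $\B^{m-1}\times\{0\}$, contradicting the choice of $\uobst$; this completes the argument.

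The main obstacle is precisely this transfer step: marrying the $W^{1,p}$-lifting theorem (valid for $p\ge2$) with the uniqueness of Sobolev liftings over a connected domain, so as to pull an $\mN$-valued extension back to an $\mNcov$-valued one and apply Proposition \ref{mainprop} on the compact cover. The auxiliary points — that the passage from the local counterexample on $\B^{m-1}\times[0,1]$ to a global one on $\mM$ goes through verbatim as for Theorem \ref{maintheo}, and that the uniqueness-of-lifting statement is available in the trace class $\Wtrp$ — will require some care but no essentially new idea.
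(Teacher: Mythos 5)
Your proposal is correct and follows essentially the same route as the paper: the positive statements by lifting to the compact cover $\mNcov$ and extending upstairs via Hardt--Lin/Theorem \ref{maintheo} (the case $1\leq p<2$ using Theorem 3 of \cite{BeChi} to get the lifting for free), the range $2\leq p<3$ by the topological-singularity obstruction of \cite{HL, BeD}, the reverse implication in iii) by Lemma \ref{drouot}, and the range $\tilde{\mathfrak p}_{\rm c}(\mN)+1\leq p<m$ by pushing a non-extendable $\mNcov$-valued counterexample down through $\Pi$ and pulling an alleged $\mN$-valued extension back with the $W^{1,p}$-lifting theorem for $p\geq 2$. Your only deviation is cosmetic: you transfer the local map of Proposition \ref{mainprop} and handle the deck-transformation ambiguity explicitly, whereas the paper works directly with a non-extendable boundary map on $\partial\mM$ and identifies the two liftings on the boundary without comment -- your version is, if anything, slightly more careful on that point.
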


In view of the results described in Theorems \ref{maintheo}, \ref{deux} and \ref{trois}, the only other  case which remains open,  when $\mM$ is simply connected,  corresponds to   the case: 
\begin{equation*}
\label{open}
   \pi_1(\mN) {\rm \ is  \ finite \  and\  non \ trivial \ } {\rm  \ and \ }  3 \leq p <\tilde {\mathfrak p}_{\rm c}(\mN)+1<m.
   \leqno{({\mathcal O2})}
\end{equation*}
  Indeed, in this case it follows from Theorem \ref{trois} and Lemma \ref{drouot} that properties \ref{theglaude}  and 
$\Liftp (\partial \mM, \mN)$ are \emph{equivalent}. However, to  the author's knowlegde,   the later problem remains completely open  in the range of exponents $p$ considered. 

\begin{remark} {\rm  The lifting problem $\Liftp (\partial \mM, \mN)$ possesses  some  strong ressemblance  with the square (or  the  $k$-th) root  problem  for $\S^1$ valued maps in the Sobolev class.  This problem, which was addressed in \cite{BeChi}, is \emph{ solved } with a positive answer by Mironescu in \cite {miro, miro1} for $W^{1-\slash p, p}(\partial \mM, \S^1)$ maps, when  $p\geq3$. The proof relies on an ingenious  decomposition of the lifting, somewhat in the same spirit as the one introduced in \cite{BBM2}. 
These results \emph{might possibly}  suggest that the answer to $(\mathcal O)_2$  is also positive. 
}
\end{remark}

  Whereas  the proofs of Theorems \ref{deux} and \ref{trois} are essentially combinations of earlier known results (combined with Theorem
  \ref{maintheo}), the main contribution of the present paper is  Theorem \ref{maintheo} and its main ingredient Proposition \ref{mainprop}. The rest of this introduction  presents an outline of its proof. 
   \subsection{On the proof of Proposition \ref{mainprop}} 
     Let us  first show  that the map $\uobst$ constructed in Proposition \ref{mainprop} \emph{cannot be  not regular}.   In order to  get convinced of this fact,   we  introduce the set 
    \begin{equation}
    \label{tracer}
  \TrLq(\mN)=\TrLqmp (\mN)\equiv  \{ v \in \Wtrp(\R^{m-1}, \mN) {\rm \ with \ } v=\mq_0 {\rm \ on \ }   \R^{m-1} \setminus \B^{m-1} \} 
    \end{equation}
        and the quantity $\Ext_p(u)$ defined  for $u \in \TrLq(\mN)$ as 
     \begin{equation}
     \label{gammap}
     \Ext_p(u)=\Ext_{m, p}(u) =\inf \{\rE_p(U,\mathcal D_m),  U\in W_{\rm loc}^{1,p} (\mathcal D_m, \mN),\ U(x, 0)=u(x) {\rm \ for  \ } x \in  \R^{m-1} \}, 
   \end{equation}
   with the convention that the value is infinite when the defining set is empty,  where    the $p$-Dirichlet energy $\rE_p$ is  defined  for a domain $\Omega$ as
\begin{equation*}
\label{penergy}
 \rE_p(v, \Omega)=\int_\Omega \vert \nabla v \vert^p dx,  {\rm \ for \ } v: \Omega \to \R^\ell.
\end{equation*}
 and where we have also set
$$\mathcal D_m=\R^{m-1} \times [0,1].
$$ 
 In this setting,  Proposition \ref{mainprop} can be rephrased as 
      \begin{equation}
      \label{rephrased}
        \Ext_p (\uobst)=+ \infty.
        \end{equation}
 On the other hand, if $u$ belongs to the space $W^{1,p}(\B^{m-1}, \mN)$  then choosing as a comparison function in \eqref{gammap} the map $U$ defined on $\mathcal D_m$ by $U(x, t)=u(x)$ for $x \in \R^{m-1}$, and $t \in [0,1]$, then we are led to the inequality
      $$
      \Ext_p (u)\leq  E_p(u).
        $$
  Comparing this inequality with \eqref{rephrased}, we are led to the conclusion  that $\uobst$ \emph{does not belong} to   the space   $W^{1,p}(\B^{m-1}, \mN)$ and hence is not Lipschitz. However,  Although the map $\uobst$ is not regular,  an important intermediate step in the proof of Proposition \ref{mainprop}  is to obtain    lower bounds on $\Ext_p$ for specific lipschitz functions. We define for that purpose  for $u \in {\rm T}_{{\rm race}, \mathfrak q_0}^{m, \rmc}$ the quantity 
    \begin{equation}
    \label{cradoc}
   \mathcal I^{\rm xt}_{m}(u)=\inf \{\rE_{\mpc} \left(U,\Cyl^m\left (3 \slash 2\right) \right),  U\in \mathfrak W_m (u) \}, 
     \end{equation}  
    where we have set 
    \begin{equation}
    \label{define}
 \mathfrak W_m (u)=\{ U \in    W_{\rm loc}^{1,\rmc} (\Cyl^m\left (3 \slash 2\right), \mN),\ U(x, 0)=u(x) {\rm \ for  \ } x \in  \R^{m-1} \}
    \end{equation}
and, for $0\leq r \leq 2$, 
\begin{equation}
\label{chapeau}
  \Cyl^m(r)=\B^{m-1}(r) \times [0,\frac r 2].
  \end{equation}
Notice that, in  definition \eqref{cradoc}, we choose the exponent for the energy  functional $\rE_\mpc$ to be equal to $\mpc$,  whereas the integrability of the test  maps  $U$ is higher, since it is  assumed to equal to $\rmc=\mpc+1$.
   We have:
  
  \begin{proposition}
  \label{pirate}   Assume that  $\mpc (\mN)\not = 1$
  let $m$ be an integer such that $m \geq \mpc(\mN) $. For any integer $k \in \N^*$, there exists a  Lipschitz map $\mathfrak U_m^k$ in $\TrLqmp (\mN)$ such that 
  \begin{equation}
  \label{sept}
  \left\{
  \begin{aligned}
   & \Vert \nabla \mathfrak U_m^k \Vert_{L^\infty(\R^{m-1})} \leq  \rc_{m,1} k
  {\rm \ \, and \  \, } \\
   &\mathcal I^{\rm xt}_{m} (\mathfrak U_m^k) \geq \rc_{m,2} k^{\mpc} \geq \rc_3  \,  \rE_{\mpc}(\mathfrak U_m^k), 
   \end{aligned}
   \right.
   \end{equation}
    where $\rc_{m, 1}>0$, $\rc_{m, 2}>0$ and  $\rc_{m,3}$ are constants which do not depend on $k$.
  \end{proposition}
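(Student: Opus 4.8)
The plan is to adapt to a general target the lifting‑type obstruction used for $\mN=\S^1$ in \cite{BeD}, the new ingredient being the Hurewicz isomorphism. Since $\mN$ is simply connected and $\pi_j(\mN)=0$ for $j<\mpc$, one has $\pi_\mpc(\mN)\cong H_\mpc(\mN;\Z)\neq\{0\}$; fix a smooth non‑nullhomotopic $\omega\colon\S^\mpc\to\mN$ with $\omega(\ast)=\mq_0$, and a smooth closed $\mpc$‑form $\zeta$ on $\mN$ detecting it, i.e. $\int_{\S^\mpc}\omega^\ast\zeta=1$ (if $\pi_\mpc(\mN)$ is pure torsion one replaces $\zeta$ by a $\Z_q$‑cocycle, which only changes constants). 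The single analytic fact needed about this data is the pointwise bound $|v^\ast\zeta|\leq C_\mN\,|\nabla v|^\mpc$ for Lipschitz maps $v$ into $\mN$. From $\omega$ one then builds a Lipschitz map $\mathfrak U_m^k\colon\R^{m-1}\to\mN$, equal to $\mq_0$ off $\B^{m-1}$, that realises the class $[\omega]$ with a $k$‑fold ``radial multiplicity'': working (for $m\geq\mpc+2$, the only case relevant to Proposition \ref{mainprop} since there $\mpc+1\leq p<m$) inside a chart $\cong\B^{m-2-\mpc}\times\B^{\mpc+1}\subset\B^{m-1}$, one precomposes $\omega$ with a fixed $k$‑fold ``radial covering'' $\Psi_k\colon\B^{\mpc+1}\to\S^\mpc$ — the $\pi_\mpc$‑analogue of $w\mapsto\exp(2\pi ik(1-|w|))$ — chosen so that $\Psi_k=\ast$ near $\partial\B^{\mpc+1}$, $\Vert\nabla\Psi_k\Vert_\infty\simeq k$ and $\int|\Psi_k^\ast(\omega^\ast\zeta)|\simeq k^\mpc$, and sets $\mathfrak U_m^k=\mq_0$ off the chart.

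With this choice one reads off immediately that $\Vert\nabla\mathfrak U_m^k\Vert_{L^\infty}\leq\rc_{m,1}k$, that $\mathfrak U_m^k\in\TrLqmp(\mN)$ (it is Lipschitz and $\mq_0$ off $\B^{m-1}$), and, by the scaling of $\Psi_k$, that $\rE_\mpc(\mathfrak U_m^k)=\int_{\R^{m-1}}|\nabla\mathfrak U_m^k|^\mpc\simeq k^\mpc$. Since the trivial cylinder extension $U(x,t)=\mathfrak U_m^k(x)$ lies in $\mathfrak W_m(\mathfrak U_m^k)$ and has $\mpc$‑energy $\simeq k^\mpc$, this already gives $\mathcal I^{\rm xt}_m(\mathfrak U_m^k)\lesssim k^\mpc$, so the last inequality of \eqref{sept} (with $\rc_{m,3}=\rc_{m,2}/\rE_\mpc(\mathfrak U_m^k)\cdot k^\mpc$) will follow once the lower bound $\mathcal I^{\rm xt}_m(\mathfrak U_m^k)\geq\rc_{m,2}k^\mpc$ is established. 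Thus the whole content is: \emph{every} $U\in\mathfrak W_m(\mathfrak U_m^k)$ satisfies $\rE_\mpc(U,\Cyl^m(3/2))\geq\rc_{m,2}k^\mpc$.

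To prove this I would combine the pointwise bound with a coarea / ``sheet‑counting'' estimate. First, $\rE_\mpc(U,\Cyl^m(3/2))\geq C_\mN^{-1}\int_{\Cyl^m(3/2)}|U^\ast\zeta|$, and $U^\ast\zeta$ is a closed $\mpc$‑form on $\Cyl^m(3/2)$ whose pullback to the bottom face is $(\mathfrak U_m^k)^\ast\zeta$, with $\int_{\{t=0\}}|(\mathfrak U_m^k)^\ast\zeta|\simeq k^\mpc$. Writing $U^\ast\zeta=\alpha+dt\wedge\beta$, the trace condition forces $\alpha(\cdot,0)=(\mathfrak U_m^k)^\ast\zeta$, and the $k$ radial layers of $\Psi_k$ give, for values $y$ in a fixed positive‑measure subset of $\S^\mpc$, that the ``level set'' of $U$ over $y$ meets the bottom along $\simeq k$ disjoint $(m-1-\mpc)$‑dimensional sheets and therefore separates $\simeq k$ nested regions of the slab; each separation costs a fixed amount of $(m-\mpc)$‑area, and — exactly as in the $\S^1$ case, where unwinding a $k$‑fold phase forces length $\simeq k$ — these contributions must persist for all $t\in[0,3/4]$ (using the \emph{full} height, rather than a thin layer near $\{t=0\}$, is what produces $k^\mpc$ rather than $k^{\mpc-1}$). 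Summing the $\simeq k$ contributions and integrating over the positive‑measure set of values $y$ yields $\int|U^\ast\zeta|\gtrsim k^\mpc$. The degenerate cases $m=\mpc$ and $m=\mpc+1$ — not needed for Proposition \ref{mainprop} — are handled by the same scheme after suppressing the two, resp. one, ``transverse'' directions.

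The hard part is this last paragraph. Two points are delicate. First, the coarea/sheet‑counting must be run for a merely $W^{1,\rmc}_{\rm loc}$ competitor $U$: this is precisely why the integrability exponent $\rmc=\mpc+1$, just above the critical value $\mpc$, is imposed in $\mathfrak W_m$ — it makes almost every slice regular enough and, crucially, it rules out the ``fast collapse'' extensions that would concentrate all the energy on a thin layer near $\{t=0\}$ (admissible in $W^{1,\mpc}$ but not in $W^{1,\mpc+1}$, since such a collapse has $\mpc$‑energy $\sim\epsilon^{\,1-\rmc}\to\infty$) and would otherwise bring $\mathcal I^{\rm xt}_m$ down to $O(1)$. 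Second, one must verify that the $k$ layers of $\Psi_k$ genuinely propagate into $k$ non‑collapsible features of any extension, not ones that can be cheaply merged or pushed out through the lateral boundary of $\Cyl^m(3/2)$; this is the higher‑homotopy incarnation, via Hurewicz and the class $[\zeta]$, of the $\S^1$ phase‑unwinding estimate, and is where the bulk of the work lies.
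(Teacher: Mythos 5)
Your proposal does not prove the only hard assertion in \eqref{sept}, namely the lower bound $\mathcal I^{\rm xt}_{m}(\mathfrak U_m^k)\geq \rc_{m,2}k^{\mpc}$: the ``sheet\,/\,level--set persistence'' paragraph is exactly the content of the proposition, and you yourself flag it as unproven. More seriously, the scheme cannot work as specified, because the conditions you impose on $\Psi_k$ (equal to the basepoint near $\partial\B^{\mpc+1}$, $\Vert\nabla\Psi_k\Vert_\infty\simeq k$, pullback mass $\simeq k^{\mpc}$) only control the \emph{mass} of the exact form $(\mathfrak U_m^k)^\ast\zeta$, not any topological quantity that grows with $k$ and that a competitor in $\mathfrak W_m(\mathfrak U_m^k)$ must respect. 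Concretely (take $m=\mpc+2$, so the chart is all of $\B^{m-1}$): let $b:\B^{\mpc+1}\to\S^{\mpc}$ be a fixed smooth map, equal to the basepoint near the boundary, null--homotopic rel boundary but somewhere submersive, and let $\Psi_k$ be an array of $\simeq k^{\mpc+1}$ copies of $b$ rescaled to radius $1/k$. This $\Psi_k$ satisfies every requirement you list, yet each composite bubble $\omega\circ b(\,k(\cdot-a_j))$ can be capped off rel boundary inside the small cylinder $\B^{\mpc+1}(a_j,1/k)\times[0,1/k]$ by a rescaled fixed null--homotopy; the resulting map (extended by $\mq_0$ elsewhere) lies in $\mathfrak W_m(\mathfrak U_m^k)$, has finite $\rmc$--energy $\simeq k^{\mpc}$, and its $\mpc$--energy is $\simeq k^{\mpc+1}\cdot k^{-2}=k^{\mpc-1}=o(k^{\mpc})$. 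So the claimed lower bound is false for a construction meeting all your hypotheses: the $k$ ``radial layers'' are a $\pi_{\mpc+1}$\,/\,mass phenomenon, and neither the cohomological pairing with $\zeta$ (the flux of $(\mathfrak U_m^k)^\ast\zeta$ through any $\mpc$--sphere in the bottom is $O(1)$) nor the $W^{1,\rmc}$ integrability prevents cheap extensions. The torsion remark is also wrong: when $\pi_{\mpc}(\mN)$ is torsion there is no detecting de Rham class, and replacing $\zeta$ by a $\Z_q$--cocycle destroys precisely the linear growth in the multiplicity that your counting needs.

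The paper proceeds differently, and the difference is exactly what is missing above: the multiplicity is stored in a homotopy class, not in a mass. In the critical dimension $m=\rmc=\mpc+1$ one takes $\mathfrak U_{\rmc}^k=\mv^i_d$ with $d=k^{\mpc}$, an array of $d$ bubbles of size $d^{-1/\mpc}=1/k$, so the Lipschitz constant is $\simeq k$ while the class is $\upsigma_i^{d}$ (Proposition \ref{clefkey}). For any competitor $V\in W^{1,\rmc}_{\rm loc}$, the class of $V$ restricted to each lateral surface $\Lambda^{\rmc-1}(r)$ is $-\upsigma_i^{d}$ (Lemma \ref{selecta}, i.e.\ degree theory at the limiting exponent, after Brezis--Nirenberg); the energy of any map in that class is at least $\upnu_{\mpc}(\upsigma_i^{d})\geq C\,d$ (Lemma \ref{ouistiti}, proved by projecting onto the $\mpc$--skeleton, a wedge of spheres, and using degree -- not a differential form, which matters both for the bookkeeping above and because purely cohomological bounds are too weak in general, cf.\ the $\S^3\to\S^2$ example of Rivi\`ere recalled in Section \ref{topenergy}); integrating over $r\in[1,3/2]$ via Lemma \ref{lefuneste} gives Lemma \ref{bellaciao}, and the general dimension is reached by the rotation construction $\mathfrak I^{\mm}$ and Fubini (Proposition \ref{rotextension}). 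Note also that the case $m=\rmc$, which your chart $\B^{m-2-\mpc}\times\B^{\mpc+1}$ cannot accommodate and which you set aside, is the heart of the matter in the statement and in the paper. To salvage your approach you would need precisely these two missing ingredients: a slicewise conserved homotopy class of size $d=k^{\mpc}$ carried by your boundary datum, and the linear-in-$d$ lower bound for the $\mpc$--energy in that class.
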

  
   We next describe some  observations  which lead to the proof of Proposition \ref{pirate}. 
  
 \subsubsection{ The linear extension operator}
    Consider first a map in $\Wtrp(\R^{m-1}, \R^\ell)$ such that $u=0$ on $\R^{m-1} \setminus \B^{m-1}$, then   the interpolation inequality for the $\Wtrp$ norm yields, for some universal constant $\rC_m$ depending only on $m$ 
    \begin{equation}
    \label{frodonet}
      \Vert u \Vert_{1-1\slash p, p} \leq \rC_m \Vert u \Vert_{1,p}^{1-1\slash p}   \Vert u \Vert_{p}^{1\slash p} \  {\rm \ provided \ }  u \in \Wunp(\R^{m-1}).
   \end{equation}
On the other hand, it follows from standard extension results that there exists a linear operator $\Text: \TrL \to  \Wp $, where 
\begin{equation*}
\left\{
\begin{aligned}
&\TrL=\TrL(\R^\ell)\equiv \{ v \in \Wtrp(\R^{m-1}, \R^\ell) {\rm \ with \ } v=0 {\rm \ on \ }   \R^{m-1} \setminus \B^{m-1} \}  \ {\rm \ and \ }  \\
&\Wp  \equiv  \{ V \in \Wunp (\R^{m-1}\times [0,1], \R^\ell){\rm \ with \ } v=0 {\rm \ on \ }  \left( \R^{m-1} \setminus \B^{m-1}(2)\right) \times [0,1]\}, 
\end{aligned}
\right.
\end{equation*}
 such that, if $U=\Text(u)$,  then 
\begin{equation}
\label{extension}
 \Vert U \Vert_{1,p} = \Vert \Text(u) \Vert_{1,p} \leq  C   \Vert u \Vert_{1-1\slash p, p}.
 \end{equation}
 
 
 Combining \eqref{extension} with  estimate \eqref{frodonet} we are led, in case $u \in \Wunp(\R^{m-1}, \R^\ell) $  to the estimate
\begin{equation}
\label{ineq}
\Vert \nabla  \Text(u) \Vert_{L^p(\mathcal D_m)} \leq \rC \Vert \nabla  u \Vert_{_{L^p(\R^{m-1})}}^{^{1-\frac 1 p}}  \Vert  u \Vert_{L^p(\R^{m-1})}^{\frac {1}{p}}. 
\end{equation}

We  turn back to $\mN$-valued maps. 
  Since the manifold $\mN$ is compact, we may choose some number $\rL$ such that $\vert y \vert \leq \rL$ for any $y \in \mN$, and hence  inequality \eqref{ineq} applies to any Lipschitz map $u \in  \TrLq (\mN)$ yields
  \begin{equation}
   \Vert u \Vert_{1-1\slash p, p} \leq  \rC_\rL \Vert \nabla  u \Vert_{_{L^\infty(\R^{^{m-1}})}}^{^{1-\frac 1 p}}.
  \end{equation}
  Setting $\Gamma_p^{\rm xt}(u))=\rE_p (\Text(u))$ we are led to the estimate
  \begin{equation}
  \label{douze}
  \Gamma_p^{\rm xt}(u) \leq   \rC \left( E_p(u)\right)^{1-\frac {1}{p}} \leq  \rC \Vert \nabla  u \Vert_{_{L^\infty(\R^{m-1})}}^{^{1-\frac 1 p}}.
 \end{equation}
  It is worthwhile to compare estimate \eqref{douze} with  the corresponding inequality \eqref{sept} for the  quantite $\Ext_p$ for the maps $\mathfrak U_m^k$ and to notice the  differences in the power laws  in term of the energy $\rE_p$  and the $L^\infty$ norm of the gradient as $k$ grows to $ + \infty.$
 
  \begin{remark}{\rm  In \cite {HL}, Hardt and Lin have succeded to show  that    the inequality
  \begin{equation}
  \label{hardtlin}
   \Ext_p(u) \leq   \rC \left( E_p(u)\right)^{1-\frac {1}{p}}
   \end{equation}
   holds  for $1\leq p< \mpc (\mN)+1$, 
    constructing a kind of    non linear analog of the operator $\Text$    which preserves the constraint on the target. Their proof   uses a tricky reprojection method.  Notice that in the special case $u$ is assumed to be moreover Lipschitz, then 
    \eqref{hardtlin} yields the estimate
   \begin{equation}
   \label{hardtlin2}
   \Ext_p(u) \leq   \rC \left \Vert \nabla u \right \Vert_{L^\infty( \R^{m-1})}^{p-1}. 
  \end{equation}
    }
     \end{remark}
 
  \begin{remark}
  \label{carraso}
  {\rm
   Proposition \ref{mainprop} shows that an inequality similar to \eqref{hardtlin2} does not hold for $ \mpc (\mN)+1<p<m$.
Indeed, we have 
\begin{equation}
\label{shakelton}
 \Ext_p(\mathfrak U_m^k ) \geq C_m k^p {\rm \ whereas  \ }     \Vert \nabla \mathfrak U_m^k \Vert_{L^\infty(\R^{m-1})} \leq  \rc_{m,1} k,
\end{equation}
the first inequality being a  consequence of  the second inequality in \eqref{sept} and inequality  \eqref{but} established in subsection \ref{introrem}. 
}
\end{remark}

 In the next paragraph, we will outline the main  topological nature of the obstruction to inequality \eqref{hardtlin} as well as the main ideas in the construction of Proposition \ref{mainprop}.
  \subsubsection{Conservation of topological fluxes}
We begin this subsection with a few elementary remarks of topological nature.  We start with a general observation concerning      the space $C_{\mq_0}^0 (\B^{m-1}, \R^\ell)$ defined by
 $$
 C_{\mq_0}^0 (\B^{m-1}, \R^\ell)=\{  w \in C^0(\B^{m-1}, \R^\ell),   \ {\rm s. t}  \  w(x)=\mq_0   {\rm \ on \ }
\partial \B^{m-1}
\  {\rm  \  for \ some \  } \mq_0\in \R^\ell \}. 
$$
Maps $v$ in $C_{\mq_0}^0 (\B^{m-1}, \R^\ell)$ will be considered sometimes as maps defined on the whole space $\R^{m-1}$ extending  their value by
$v(x)=\mq_0$ on $\R^{m-1}\setminus \B^{m-1}$, so that they are still continuous considered as maps on $\R^{m-1}$. We recall that $C_{\mq_0}^0 (\B^{m-1}, \R^\ell)$
may be mapped one to one to the space
  $C^0(\S^{m-1},  \R^\ell)$ thanks to the stereographic projection ${\rm St}_{m-1}$ which is  a smooth map from $\S^{m-1} \setminus \{\sP\}\subset \R^m $ onto   $\R^{m-1}$ and is   defined by 
 $$
 {\rm St}_{m-1} (x_1, \ldots  x_{m})= \left( \frac{x_1}{1+x_{m}}, \ldots,  \frac{x_{m-1}}{1+x_{m}} \right), 
 $$
 with $\sP=(0,0,0, \ldots, -1)$.  It follows that  given any map $v$  in $C^0_{\mq_0} (\R^{m-1}, \mN)$  the map $v \circ  {\rm St}_{m-1}^{-1}$ belongs to $C^0(\S^{m-1}, \mN)$. This allows to identify maps in   $C_{\mq_0}^0 (\B^{m-1}, \mN)$ with maps in $C^0(\S^{m-1}, \mN)$.  Moreover,  we have a one to one correspondance of homotopy classes.   Given a map $\varphi \in  C_{\mq_0}^0 (\B^{m-1}, \mN)$ we denote by $\llbracket \varphi \rrbracket$ its homotopy class. \\
 
 We consider next  a map $V\in C^0 (\R^{m-1} \times [0,1], \mN)$ and    the map $v$ defined on $\R^{m-1}$ by $v(x)=V(x, 0)$ for $x\in \R^{m-1}$.
  We  assume furthermore that 
  \begin{equation}
  \label{confitde}
   v =\mq_0  \ {\rm  \ on \ }  \R^{m-1}\setminus \B^{m-1}  {\rm \  so  \  that  \ }  
   v_{\vert_{\B^{m-1}}} \in C_{\mq_0}^0 (\B^{m-1}, \mN), 
   \end{equation}
   For $ 1 \leq r \leq 2$,  consider the cylinder  $\Cyl (r)=\Cyl^m(r)$, with $\Cyl^m$ defined in \eqref{chapeau} and 
  denote by $\Lambda^{m-1}(r)$  the inner part of the boundary defined by
  $$
  \Lambda^{m-1}(r)=\left(\partial \B^{m-1}(r) \times [0, \frac r 2] \right )\cup \B^{m-1} (r)\times \left\{\frac r 2 \right\} {\rm \ so \ that \ }  \partial \Cyl^m(r)=\Lambda_r^{m-1} \cup \B^{m-1}(r) \times \{0\}.
  $$
 Notice  that $\Lambda^{m-1}(r)$ may be mapped homeomorphically  to  the ball  $\B_r^{m-1}$ by a bilipschitz homeomorphism $\Phi_r$ whose Lipschitz constants may be bounded independently of $r$, that is $\Vert  \nabla  \Phi_r \Vert _\infty +\Vert  \nabla  \Phi_r^{-1}\Vert _\infty \leq C$.   Since,   in view of \eqref{confitde},  the restriction of the  map $V$ to $\partial \Lambda_r^{m-1}= \partial \B_r^{m-1}\times \{0\}$ is constant, we may define the homotopy class of its restriction to $\Lambda_r$ which we oriente according to the outer normal to $\partial \Cyl$. We  claim that, with this choice of orientation we have
 \begin{equation}
 \label{claim}
 \llbracket V_{\vert_{_{\Lambda^{m-1} (r)}}}\  \rrbracket=-\llbracket  v_{\vert_{\B^{m-1}}} \rrbracket  {\rm  \ for \ any \ } 1\leq r \leq 2. 
 \end{equation}
Indeed, since $V$ is continuous inside the cylinder $\Cyl(r)$, its  restriction to the boundary $\partial \Cyl (r)$, which is homeomorphic to the sphere $\S^{m-1}$,  has  trivial homotopy class. On the other hand we have
$$ \llbracket V_{\vert_{ \partial \Cyl(r)}}  \rrbracket =
 \llbracket V_{\vert_{_{\Lambda^{m-1} (r)}}}   \rrbracket + \llbracket v_{\vert_{\B^{m-1}}} \rrbracket$$
   so that  the conclusion \eqref{claim}  follows. 
The identity  \eqref{claim}  extends  to   Sobolev maps, provided the exponent $p$ is larger than $m$.  We have:

 \begin{lemma}
\label{selecta}
Assume that $p\geq m$ and  let  $v \in \TrLq(\mN)$ and $V\in W_{\rm loc}^{1,p}(\mathcal D_m, \mN)$ be such that
\begin{equation}
\label{meditation}
V(\cdot, 0)=v(\cdot) {\rm \ in \  the  \ sense \ of \ traces \ on \ } \R^{m-1}.
\end{equation}
Then, the  homotopy classes $\llbracket  v_{\vert_{\B^{m-1}}} \rrbracket $ and $\llbracket V_{\vert_{_{\Lambda^{m-1} (r)}}}\  \rrbracket$     are well defined  for every $1<r\leq 2$ and moreover \eqref{claim} holds. 
\end{lemma}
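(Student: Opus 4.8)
The strategy is the usual one for extending homotopy-theoretic identities from the continuous to the Sobolev setting when $p\ge m=\dim$(domain of the sphere): use Fubini and the fact that $W^{1,p}$ with $p$ above the dimension embeds into continuous maps on almost every sphere of the right dimension, then combine with the density of smooth maps and the continuity identity \eqref{claim}. First I would observe that since $p\ge m$, the trace $v$ of $V$ on $\R^{m-1}\times\{0\}$ lies in $W^{1-1/p,p}\cap$ (something that restricts to a VMO/continuous map after the stereographic identification), so that $v\circ{\rm St}_{m-1}^{-1}\in W^{1-1/p,p}(\S^{m-1},\mN)$ is VMO and hence has a well-defined homotopy class $\llbracket v_{\vert_{\B^{m-1}}}\rrbracket$ in the sense of Brezis–Nirenberg; here one uses that $v=\mq_0$ off $\B^{m-1}$ to get the one-point compactification. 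Similarly, for each fixed $1<r\le 2$, the set $\Lambda^{m-1}(r)$ is an $(m-1)$-dimensional Lipschitz manifold bi-Lipschitz to $\B^{m-1}_r$ via $\Phi_r$, and $V$ restricted to it is the trace of a $W^{1,p}$ map on the $m$-dimensional cylinder; since $p\ge m$, this trace is in $W^{1-1/p,p}(\Lambda^{m-1}(r),\mN)$, again VMO, and constant on the boundary circle, so $\llbracket V_{\vert_{\Lambda^{m-1}(r)}}\rrbracket$ is well-defined.

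Next I would reduce to the smooth case. By density, pick smooth maps $V_n\in C^\infty(\overline{\Cyl^m(2)},\mN)$ (or $\R^\ell$-valued smooth maps composed with nearest-point projection onto $\mN$, valid since the maps are genuinely $\mN$-valued) converging to $V$ in $W^{1,p}(\Cyl^m(2),\mN)$. For such smooth $V_n$, identity \eqref{claim} holds by the elementary continuity argument already given in the excerpt (the restriction of $V_n$ to $\partial\Cyl^m(r)\cong\S^{m-1}$ bounds, hence is null-homotopic, and the boundary sphere splits as $\Lambda^{m-1}(r)$ glued to $\B^{m-1}(r)\times\{0\}$ along their common boundary). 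The point is then to pass to the limit in the homotopy classes. Using the trace theorem, $V_n(\cdot,0)\to v$ in $W^{1-1/p,p}(\R^{m-1})$, hence in BMO after the stereographic identification; since the homotopy class of a VMO map into $\mN$ is locally constant in the BMO topology (Brezis–Nirenberg), for $n$ large $\llbracket V_n(\cdot,0)_{\vert_{\B^{m-1}}}\rrbracket=\llbracket v_{\vert_{\B^{m-1}}}\rrbracket$. For the inner boundary the argument is the delicate point: $W^{1,p}$ convergence on the $m$-dimensional cylinder does not immediately give $W^{1-1/p,p}$ convergence of traces on the single hypersurface $\Lambda^{m-1}(r)$ for the specific value $r$ we want — but it does give, by Fubini applied to the family of translated/dilated copies $\{\Lambda^{m-1}(\rho)\}_{\rho\in(1,2)}$, convergence in $W^{1-1/p,p}(\Lambda^{m-1}(\rho))$ and hence in BMO for a.e. $\rho$, up to a subsequence. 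Combined with the fact (again a Fubini/slicing argument, valid because $p\ge m$ so that $V\in W^{1,p}$ restricts to a $W^{1,p}\subset C^0$ map on a.e. such slice) that $\rho\mapsto\llbracket V_{\vert_{\Lambda^{m-1}(\rho)}}\rrbracket$ is constant in $\rho$ — the slices for nearby $\rho$ are connected through $V$ itself, giving a homotopy — one gets that \eqref{claim} for $V_n$ at a good radius $\rho_n$ passes to \eqref{claim} for $V$ at the same radius, and then by radius-independence to all $1<r\le 2$.

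The main obstacle I expect is precisely the trace-convergence issue on the fixed hypersurface $\Lambda^{m-1}(r)$: one cannot control the trace on one prescribed slice from $W^{1,p}$-convergence on the bulk, so the argument must be organized so that the radius is chosen (a.e.) rather than prescribed, and then the radius-independence of the homotopy class — itself proved by a slicing argument showing the map $V$ provides a continuous homotopy between restrictions to nearby good slices — is invoked to recover the statement for every $1<r\le 2$. A clean way to package this is: (1) show $r\mapsto\llbracket V_{\vert_{\Lambda^{m-1}(r)}}\rrbracket$ is well-defined and constant on $(1,2]$ using that $p\ge m$ forces $V$ to be continuous on the complement of a small set and that the $\Lambda^{m-1}(r)$ foliate an open region; (2) prove \eqref{claim} at one good radius by smooth approximation plus BMO-stability of homotopy classes; (3) conclude. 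Steps (1) and (3) are soft; step (2)'s only subtlety is ensuring the approximating sequence can be taken $\mN$-valued and smooth up to the relevant part of the boundary, which is standard since $\mN$ is a compact smooth submanifold of $\R^\ell$ (use reflection/extension across $\Lambda^{m-1}(r)$ and mollification followed by nearest-point projection).
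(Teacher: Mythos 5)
Your plan is essentially the paper's own: the paper handles $p>m$ immediately via the Sobolev embedding (so $V$ is continuous on the $m$-dimensional cylinder and the preceding topological argument applies verbatim), and for the limiting case $p=m$ it invokes precisely the Brezis--Nirenberg VMO/BMO machinery you describe — well-definedness and BMO-stability of homotopy classes, smooth approximation, and slicing so that the radius is chosen almost everywhere and then propagated by radius-independence. Your unified treatment of the whole range $p\ge m$ is sound; the only simplification you miss is that for $p>m$ continuity of $V$ makes all of this unnecessary.
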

The proof is immediat for $p>m$, since it that case $V$ is continuous by Sobolev embedding. The limiting case $p=m$ requires more care and 
 follows  adapting ideas the from the works of Brezis and  Nirenberg \cite {BN1, BN2}.
 
 \begin{remark}{\rm  The result of Lemma \ref{selecta} \emph{does not hold} when $1\leq  p<m$, due to the possibility of having \emph{topological} singularities.  Assume  indeed that $\pi_{m-1} (\mN)\not = \{0\}$ and consider a map in  $v \in C_{\mq_0}^0 (\B^{m-1}, \R^\ell)$ having non trivial homotopy class and extended outside  $\B^{m-1}$ by $\mq_0. $
 Let $Q=(0,\ldots, 0, \frac 12 )\in \R^m$.  Given a point $M=(x_1, \ldots, x_{m-1}, x_m) \in \R^{m-1}\times [0,1]$   we set 
 \begin{equation}
 \label{retrodor}
 \left\{
 \begin{aligned}
 V(M)&=v(\Phi (M))  {\rm \ if \ } x_m > \frac 12 {\rm \ where \ } \Phi(M)= {\rm  D} (Q, M)  \cap  \R^{m_1}\times \{0\}  \\
  V(M)&=\mq_0 {\rm \ otherwise},  
\end{aligned}
 \right.
\end{equation}
 where $ {\rm D} (Q, M)$ denotes the line joining $Q$ to $M$. It follows that, if $p<m$, then  $v \in W^{1, p} (\mathcal D^m, \mN)$ with $V=v$ on $\B^{m-1} \times \{0\}$, the map $V$ being continuous except at the point $Q$, where it has a singularity  carrying a topological charge (the restriction to any small sphere around $Q$ has non trivial topology). On the other hand, we have, for $1\leq r \leq 2$
 $$
 V(M)=\mq_0 {\rm \ for \ }   M \in \Lambda_r^{m-1}  {\rm \ so \  that \ }  \llbracket V_{\vert_{_{\Lambda^{m-1} (r)}}}\  \rrbracket=\{0\}, 
 $$
  and hence $\llbracket V_{\vert_{_{\Lambda^{m-1} (r)}}}\  \rrbracket \not = -\llbracket  v_{\vert_{\B^{m-1}}} \rrbracket$.  Notice that the map $V$ no longer belongs to $W^{1,p}$ when $p\geq m$.
 }
 \end{remark}
 
 In the next section, we will see how these \emph{topological fluxes}  through the sets $\Lambda_r^{m-1}$ generate also \emph{energy fluxes}.
\subsubsection{ Infimum of energy  in homotopy classes and energy fluxes}
   For  an integer $ \mathfrak n \geq 2$ and an exponent $p\geq 1$ and a map $\varphi \in  {C}^0_{\mq_0} (\B^{\mn}, \mN)$,   we consider  the numbers 
$$
 \upnu_{_{\mn, p}}(\llbracket \varphi \rrbracket )=\inf \{ E_{p} (w),  w \in {\rm Lip} _{\mq_0}(\B^{\mn}, \mN) \ {\rm homotopic  \ to  \ } \varphi \}. 
$$
It follows from the scaling law for the energy
\begin{equation}
\label{fret}
\rE_p(u_r, \B^{\mn}(r))=r^{\mn-p} \rE_p(u, \B^{\mn}) {\rm \ where \ } u_r(x)=u (r x) \ {\rm \ for \ } x \in \B^{m},
 \rE_p (v_r)
\end{equation}
that,  for any  $1\leq p<\mn$, we have (letting $r$ go to zero in the above identity)
$$ \upnu_{_{\mn, p}}(\llbracket \varphi \rrbracket )=0  { \rm \ for  \  any \ homotopy \ class \ } \llbracket \varphi \rrbracket,  $$
whereas
 $$ {\rm when \ } p\geq \mn, {\rm \ then \ }  \upnu_{_{\mn, p}}(\llbracket \varphi \rrbracket )=0  {\rm \ if \ and \ only \  if \ } \llbracket \varphi \rrbracket=0.
 $$ 
 Going back to   Lemma \ref{selecta} and invoking scale invariance,  we obtain a lower bound for the energy on  surfaces $\Lambda(r)$, in the special case $m=\rmc$,  as stated in the next result.
   
   \begin{proposition} 
   \label{surfaces}
   Assume that  $p\geq m$ and  that $v$ and $V$ are as in Lemma \ref{selecta}.    Given $ p \geq s\geq m-1$,  we have,   for every $r \in [1, 2]$ and some constant $C_s>0$
\begin{equation}
\label{micromou}
   \int_{\Lambda^{m-1} (r)}\vert \nabla v \vert ^{s } \geq   C_s \upnu_{_{m-1, s}}(\llbracket v \rrbracket ).
   \end{equation}
\end{proposition}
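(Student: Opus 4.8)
The strategy is to flatten the surface $\Lambda^{m-1}(r)$ onto the ball $\B^{m-1}$ by means of the bilipschitz charts $\Phi_r$, and then to recognise the resulting energy as an admissible value for $\upnu_{_{m-1,s}}$. If $\int_{\Lambda^{m-1}(r)}\vert\nabla V\vert^{s}=+\infty$ there is nothing to prove, so I will assume it is finite; this is in fact the case for a.e.\ $r\in[1,2]$, as one sees by applying the coarea formula to the Lipschitz function $(x,t)\mapsto\max(\vert x\vert,2t)$ on $\Cyl^m(2)\setminus\Cyl^m(1)$ (whose level sets are exactly the $\Lambda^{m-1}(\rho)$, with gradient comparable to $1$) together with $V\in W^{1,p}_{\rm loc}$ and $s\le p$; for the remaining $r$ the inequality \eqref{micromou} holds trivially. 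For such an $r$, the trace $V_{\vert_{\Lambda^{m-1}(r)}}$ is a well-defined $\mN$-valued map whose tangential gradient lies in $L^{s}(\Lambda^{m-1}(r))$; by \eqref{confitde} it equals $\mq_0$ on $\partial\Lambda^{m-1}(r)=\partial\B^{m-1}(r)\times\{0\}$; and by Lemma \ref{selecta} its homotopy class equals $-\llbracket v_{\vert_{\B^{m-1}}}\rrbracket$.

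Next I would transport $V_{\vert_{\Lambda^{m-1}(r)}}$ to $\B^{m-1}$, setting $w_r=V_{\vert_{\Lambda^{m-1}(r)}}\circ\Phi_r^{-1}\circ(r\,\cdot)$, a map in $W^{1,s}(\B^{m-1},\mN)$ equal to $\mq_0$ on $\partial\B^{m-1}$. Because $\Phi_r$ has bilipschitz constant bounded independently of $r\in[1,2]$ and $s\ge m-1$, the chain rule together with the change of variables formula produce a constant $c_s>0$, depending only on $s$ and $m$, such that $\int_{\Lambda^{m-1}(r)}\vert\nabla V\vert^{s}\ge c_s\int_{\B^{m-1}}\vert\nabla w_r\vert^{s}$. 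Moreover $w_r$ and $V_{\vert_{\Lambda^{m-1}(r)}}$ have the same homotopy class up to sign, so $\llbracket w_r\rrbracket=\pm\llbracket v\rrbracket$; and since precomposition with a fixed orientation-reversing isometry of $\B^{m-1}$ is an energy-preserving bijection between the Lipschitz competitors defining $\upnu_{_{m-1,s}}(\gamma)$ and those defining $\upnu_{_{m-1,s}}(-\gamma)$, one has $\upnu_{_{m-1,s}}(\llbracket w_r\rrbracket)=\upnu_{_{m-1,s}}(\llbracket v\rrbracket)$. Thus \eqref{micromou}, with $C_s=c_s$, will follow as soon as I establish $\int_{\B^{m-1}}\vert\nabla w_r\vert^{s}\ge\upnu_{_{m-1,s}}(\llbracket w_r\rrbracket)$.

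This last inequality is the only substantial point, and it is where I expect the main difficulty to lie: $w_r$ is a priori only a $W^{1,s}$ map, whereas $\upnu_{_{m-1,s}}$ is defined as an infimum over \emph{Lipschitz} maps in the prescribed homotopy class. What is needed is that Lipschitz maps that are constant equal to $\mq_0$ on $\partial\B^{m-1}$ are dense, for the $W^{1,s}$ norm, in each homotopy class of $W^{1,s}_{\mq_0}(\B^{m-1},\mN)$. When $s>m-1$ this is routine, since $w_r$ is then continuous by Morrey's embedding and mollification followed by nearest-point projection onto $\mN$ produces Lipschitz approximations in the same class. The borderline case $s=m-1=\dim\B^{m-1}$ is the delicate one: there I would invoke the Schoen--Uhlenbeck density theorem to approximate $w_r$ in $W^{1,m-1}$ by smooth $\mN$-valued maps (adjusted in a thin boundary collar so as to remain equal to $\mq_0$ on $\partial\B^{m-1}$, at arbitrarily small energy cost), and then use that $W^{1,m-1}$-convergence implies convergence in VMO, hence preservation of the homotopy class for large index --- which is precisely the Brezis--Nirenberg mechanism already underpinning Lemma \ref{selecta}. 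Granting this density, one obtains Lipschitz maps $g_k\to w_r$ in $W^{1,s}$ with $\llbracket g_k\rrbracket=\llbracket w_r\rrbracket$ eventually, whence $\int_{\B^{m-1}}\vert\nabla w_r\vert^{s}=\lim_k\int_{\B^{m-1}}\vert\nabla g_k\vert^{s}\ge\upnu_{_{m-1,s}}(\llbracket w_r\rrbracket)$, which closes the argument. Everything else is bookkeeping with the bilipschitz bounds on $\Phi_r$ and the scaling law \eqref{fret}.
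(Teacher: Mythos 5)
Your argument is correct and follows essentially the route the paper intends (and only sketches): use Lemma \ref{selecta} to identify the homotopy class of $V$ on $\Lambda^{m-1}(r)$, flatten by the uniformly bilipschitz maps $\Phi_r$ together with the scaling in $r\in[1,2]$, and compare with the definition of $\upnu_{_{m-1,s}}$. Your extra care in bridging the gap between the $W^{1,s}$ map $w_r$ and the Lipschitz competitors in the definition of $\upnu_{_{m-1,s}}$ (Schoen--Uhlenbeck density in the critical case $s=m-1$ plus VMO-stability of the homotopy class) is exactly the point the paper leaves implicit, and it is handled appropriately.
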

 
 As a matter of fact, we will mainly  invoke this inequality with  the exponent $s=m-1$, so that we are led  to introduce the  numbers 
 \begin{equation}
 \label{matou}
 \upnu_{\mathfrak n}(\llbracket v \rrbracket )\equiv \upnu_{_{\mn, \mn}}(\llbracket v \rrbracket )  
 {\rm \ for \ } \mn \in \N^*.
 \end{equation}
   Combining H\"older's inequality with\eqref{micromou} we obtain for $V$ as in Lemma \ref{selecta} 
\begin{equation}
\label{micromou2}
   \int_{\Lambda^{m-1} (r)}\vert \nabla v \vert ^{s } \geq  C_s \left[\upnu_{_{m-1}}(\llbracket v \rrbracket )\right]^{\frac {s}{m-1}}  {\rm \ for \ } 1\leq r \leq 2 . 
   \end{equation}

We   discuss next some specific  properties of the numbers $\upnu_{_{m-1}}(\llbracket \varphi \rrbracket )$ in the special case
 \begin{equation}
 \label{dimcritic}
 m=\rmc\equiv  \mpc (\mN)+1.   
 \end{equation}
 when $\mpc\geq 2$.   In that case,  the manifold $\mN$ is   $(\mpc-1)$-connected  \footnote{recall that a manifold is said to be $\mathfrak q-1$ connected if $\pi_j(\N)=\{0\}$ for every integer $0\leq j <\mathfrak q. $}, a fact  which has important consequences on  the relevant  homotopy group $\pi_{\mpc} (\mN)$.  Such manifolds possess indeed  some  strong  similarities with  joints of spheres $\S^{\mathfrak q}.$ In particular, the homotopy group 
  $\pi_{\mpc} (\mN)$ is finitely generated and,  if  $\upsigma_1, \ldots , \upsigma_\ms$ denote the generators of  $\pi_{\mpc}(\mN)$, then the sub-groups generated by each of the $\upsigma_i's $ is infinite.  For $d\in \Z$ we set,  denoting by $\star$ the composition law in $\pi_{\mpc} (\mN), $
  $$\upsigma_i^d=\underset{ d  {\rm  \   times}}{\underbrace {\upsigma_i \star \ldots \star \upsigma_i}}. $$

   \begin{proposition}
   \label{clefkey}
   Assume that $\mpc\geq 2$.
     There exists constants $\rC_1>\rC_2>0$ depending only on $\mN$  such that   for any $i=1, \ldots,\ms $, we have
    \begin{equation}
    \label{troc}
 \rC_1 \vert d \vert  \geq    \upnu_{_{ \mpc}}(\upsigma_i^d) \geq \rC_2 \vert d \vert.  
     \end{equation}
   Moreover, for every $i=1, \ldots, \ms $ and every $d \in \Z$  there exists a Lipschitz map $\mv^i_d $ from $\B^{\mpc}$  to $\mN$ such that   
   $ \llbracket  \mv^i_d \rrbracket_{_i}=\upsigma_i^d$, 
   \begin{equation}
   \label{gluts}
   \vert \nabla \mv^i_d \vert^{\mpc} \leq \rc_0 \vert d \vert   {\rm \ in \ }   \B^\mpc {\rm \ and \ } 
  \mv^i_d=\mq_0  {\rm \ on \   } \partial \B^\mpc, 
 \end{equation}
     where  $\rc_0>0$ depends only on $\mN$ and where   $\mq_0\in \N$ is an arbitrary choosen point on $\mN$.
         \end{proposition}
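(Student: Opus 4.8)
The plan is to establish the two inequalities in \eqref{troc} by separate arguments: the upper bound, together with the maps $\mv^i_d$ realising \eqref{gluts}, comes from an explicit bubbling construction, while the lower bound rests on the Hurewicz isomorphism and on pairing the homotopy class with a de Rham class. For the construction, use that $\mN$ is $(\mpc-1)$-connected, hence simply connected, so after a homotopy and a rescaling I may fix for each $i$ a Lipschitz representative $\mv^i$ of $\upsigma_i$ which is constant, equal to $\mq_0$, in a neighbourhood of $\partial\B^{\mpc}$, and likewise a representative $\mv^i_-$ of $\upsigma_i^{-1}$. Given $d\in\Z\setminus\{0\}$, choose $|d|$ pairwise disjoint balls $B_1,\dots,B_{|d|}$, compactly contained in $\B^{\mpc}$, of common radius $\rho=c_0|d|^{-1/\mpc}$; this is possible with a constant $c_0>0$ depending only on $\mpc$ (place the centres on a cubic grid of spacing $2\rho$ inside $\B^{\mpc}(1/2)$), since then $|d|\rho^{\mpc}$ is a small fixed fraction of $|\B^{\mpc}|$. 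Let $\mv^i_d$ agree on each $B_j$ (of centre $c_j$) with the rescaled copy $x\mapsto\mv^i_\ast\big((x-c_j)/\rho\big)$, where $\mv^i_\ast=\mv^i$ if $d>0$ and $\mv^i_\ast=\mv^i_-$ if $d<0$, and $\mv^i_d\equiv\mq_0$ off $\bigcup_j B_j$. The normalisation near $\partial\B^{\mpc}$ makes $\mv^i_d$ Lipschitz and equal to $\mq_0$ on $\partial\B^{\mpc}$; since $\mpc\geq2$ the group $\pi_{\mpc}(\mN)$ is abelian and a map supported on $|d|$ disjoint positively-oriented bubbles represents the sum of their classes, so $\llbracket\mv^i_d\rrbracket=\upsigma_i^{{\rm sgn}(d)|d|}=\upsigma_i^d$. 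As $|\nabla\mv^i_d|\leq\rho^{-1}\Vert\nabla\mv^i_\ast\Vert_{L^\infty}$ on $\bigcup_jB_j$ and vanishes elsewhere, \eqref{gluts} holds with $\rc_0=c_0^{-\mpc}\max_i\big(\Vert\nabla\mv^i\Vert_{L^\infty}^{\mpc}+\Vert\nabla\mv^i_-\Vert_{L^\infty}^{\mpc}\big)$; and by the scale invariance \eqref{fret} of the $\mpc$-energy in dimension $\mpc$ each bubble contributes exactly $\rE_{\mpc}(\mv^i_\ast)$, so $\upnu_{\mpc}(\upsigma_i^d)\leq\rE_{\mpc}(\mv^i_d)\leq\rC_1|d|$ with $\rC_1:=\max_i\big(\rE_{\mpc}(\mv^i)+\rE_{\mpc}(\mv^i_-)\big)$. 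The case $d=0$ is trivial, with $\mv^i_0\equiv\mq_0$.

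For the lower bound, by the Hurewicz isomorphism theorem the hypothesis that $\mN$ is $(\mpc-1)$-connected gives $\pi_{\mpc}(\mN)\cong H_{\mpc}(\mN;\Z)$, which is finitely generated since $\mN$ is compact, and, as recalled above, the generators $\upsigma_1,\dots,\upsigma_{\ms}$ may be taken of infinite order. Fix $i$; since $\upsigma_i$ is not torsion, its image in $H_{\mpc}(\mN;\R)$ is nonzero, so by de Rham's theorem there is a smooth closed $\mpc$-form $\omega_i$ on $\mN$ for which the functional $w\mapsto\int_{\B^{\mpc}}w^*\omega_i$, defined on Lipschitz maps $w:\B^{\mpc}\to\mN$ equal to $\mq_0$ on $\partial\B^{\mpc}$, depends only on $\llbracket w\rrbracket$, is additive in it, and takes a value $c_i\neq0$ on $\upsigma_i$. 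Consequently, if $w\in{\rm Lip}_{\mq_0}(\B^{\mpc},\mN)$ is homotopic to $\mv^i_d$ then $\int_{\B^{\mpc}}w^*\omega_i=d\,c_i$, using $\upsigma_i^d=d\,\upsigma_i$ in the abelian group $\pi_{\mpc}(\mN)$. On the other hand $\omega_i$ is a bounded form on the compact manifold $\mN$, so $|w^*\omega_i|\leq C(\mpc)\Vert\omega_i\Vert_{L^\infty}|\nabla w|^{\mpc}$ almost everywhere, whence $|d|\,|c_i|\leq C(\mpc)\Vert\omega_i\Vert_{L^\infty}\,\rE_{\mpc}(w)$; taking the infimum over such $w$ gives $\upnu_{\mpc}(\upsigma_i^d)\geq\rC_2|d|$ with $\rC_2:=\min_i|c_i|/\big(C(\mpc)\Vert\omega_i\Vert_{L^\infty}\big)>0$. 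After enlarging $\rC_1$ if necessary we may assume $\rC_1>\rC_2$, which proves \eqref{troc}.

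The one genuinely delicate ingredient is the assertion that $w\mapsto\int_{\B^{\mpc}}w^*\omega_i$ descends to a homomorphism $\pi_{\mpc}(\mN)\to\R$ on maps of critical Sobolev regularity $W^{1,\mpc}$, hence in particular on Lipschitz maps: $\mpc=\dim\B^{\mpc}$ is precisely the threshold at which the $\pi_{\mpc}$-valued homotopy class is still well defined and at which this functional is simultaneously homotopy-invariant and continuous for the $W^{1,\mpc}$-topology, so that its value on smooth representatives is inherited by all Lipschitz competitors entering the definition of $\upnu_{\mpc}$. I would invoke this from the theory of Sobolev maps between manifolds and degree theory rather than reprove it; the remaining inputs — Hurewicz, the structure of $\pi_{\mpc}(\mN)$, the conformal invariance of $\rE_{\mpc}$ in dimension $\mpc$, and the bubbling construction — are routine.
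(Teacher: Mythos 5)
Your proof is correct, and the comparison with the paper is as follows. The upper bound in \eqref{troc} and the maps $\mv^i_d$ satisfying \eqref{gluts} are obtained by essentially the same bubbling/gluing construction as in the paper's Lemma \ref{upperb} (disjoint balls of radius comparable to $|d|^{-1/\mpc}$, each carrying a rescaled copy of a fixed representative of $\upsigma_i^{\pm1}$). For the lower bound, however, you take a genuinely different route. The paper (Lemma \ref{ouistiti}, following \cite{PR, Haj}) uses a triangulation: since $\mN$ is $(\mpc-1)$-connected its $\mpc$-skeleton has the homotopy type of a wedge of $\mpc$-spheres, one composes with projections $\Pi_i:\mN^{\mpc}\to\S^{\mpc}$ so that the degree of $\Pi_i\circ\varphi$ reads off the $i$-th exponent, and a Hardt--Lin type reprojection \cite{HL} pushes an arbitrary competitor onto the skeleton with controlled $\mpc$-energy; degree theory for sphere-valued maps then gives \eqref{gratuitude}. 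You instead invoke Hurewicz to identify $\pi_{\mpc}(\mN)$ with $H_{\mpc}(\mN;\Z)$, use that \eqref{sim} makes the image of $\upsigma_i$ non-torsion, and pair with a closed $\mpc$-form furnished by de Rham's theorem, so that $\int_{\B^{\mpc}}w^{*}\omega_i=d\,c_i$ for every Lipschitz competitor in the class $\upsigma_i^{d}$, while the pointwise bound $|w^{*}\omega_i|\leq C|\nabla w|^{\mpc}$ yields the linear lower bound. Your route is shorter and avoids both the skeleton structure and the reprojection machinery; its only nontrivial input is the homotopy invariance of $w\mapsto\int_{\B^{\mpc}}w^{*}\omega_i$, which you correctly flag, and since the infimum defining $\upnu_{\mpc}$ is taken over Lipschitz maps only, the classical smoothing argument suffices there (the full $W^{1,\mpc}$ theory you allude to is not needed). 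Both arguments use the $(\mpc-1)$-connectedness and the infinite-order assumption \eqref{sim} in an essential way, consistently with the paper's remark that the linear bound fails for $\pi_3(\S^2)$ \cite{Ri}: that class is invisible to real cohomology, which is exactly the detection mechanism your pairing exploits, while the paper's skeleton argument fails there because $\S^2$ is not $2$-connected.
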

  As a matter of fact, in the case $\mN=\S^p$, for which $\mpc=p$,  the  results in Proposition \ref{clefkey}    may be deduced directly from degree theory, whereas in the general case, we rely on some more sophisticated notions of topology, in particular related to the theory  of CW-complexes. 
 \subsubsection{On the construction of $\mathfrak U_m^k$ }
  We start describing  the construction  in  the case $m=\rmc=\mpc+1, $ which is actually the building block of the general case.   In that case, the construction follows directly from the construction in Proposition \ref{clefkey} since we set, for $k \in \N$
  \begin{equation}
  \label{prems}
  \mathfrak U_{\rmc}^k \equiv \mv^i_d {\rm \  with  \  } d=k^\mpc. 
   \end{equation}
 It turns out that, as a direct consequence of Proposition \ref{clefkey} and of Proposition \ref{surfaces},   that 
 the  map  $\mathfrak U_{\rmc}^k$ satisfies assumption \eqref{sept} for any $k \in \N^*$, provided  the constants $\rc_{\mpc,1}$ is choosen sufficiently large and the constant  $\rc_{\mpc, 2}$ are choosen sufficiently small,  a more  precise statement  being provided in Lemma \ref{bellaciao}.  The case  $m>\rmc=\mpc+1$ is deduced from the construction in  the critical dimension $m=\rmc=\mpc+1$ adding in a suitable way dimensions.
 \subsubsection{On the construction of $\uobst$}
   The map $\uobst$ is constructed gluing an \emph{infinite} but \emph{countable} number of scaled  and translated copies of the maps  $\mathfrak U_m^k$,  for suitable choices of  diverging indices  $k$ and shrinking  scaling factors.
The construction relies in an essential way on two properties. The first one is related to  the difference, for the maps   $\mathfrak U_m^k$, of   the asymptotic behaviors as $k$ grows  of the infimum of the energy of the extensions on one hand  and the $p$-th power of trace norm on the other. More precisely, we use extensively the fact that 
\begin{equation}
\label{cruxitude}
\Vert \mathfrak U_m^k -\mq_0 \Vert_{1-\frac 1p,  p }^p \leq C_m k^{p-1} {\rm \ whereas \ } 
  \Ext_{m, p} (  \mathfrak U_m^k) \geq \mathcal E^{\rm xt}_{m, p}( \mathfrak U_m^k)  \geq C_m k^p, 
\end{equation}
  where the quantity $\mathcal E^{\rm xt}_{m, p}(u)$, which is  localized version of  $\Ext_{m, p}$,  is defined in \eqref{credoc}.
  The second important property on which the construction is based upon is related  again on  scaling properties of the energy functional $\rE_p$. It may  an may be stated, for a general map $u: \R^m \to \mN$ and $0<r <2$ as the identity, similar to \eqref{fret},  namely
\begin{equation}
\label{scalingprop1}
 \rE_p(u_r, \Cyl^{m}(r))=r^{m-p} \rE_p(u, \Cyl^{m}) {\rm \ where \ } u_r(x)=u (r x) \ {\rm \ for \ } x \in \Cyl^m,
 \end{equation}
  so that in particular $\rE_p(u_r, \Cyl^{m}(r))$ tends to $0$ as  the scaling factor $r$ goes to zero. The scaling law 
  \eqref{scalingprop1} has a counterpart for the semi-norm $\lvert  \cdot \rvert_{1-1\slash p, p}$ given by the relation
  \begin{equation}
  \label{scalsemi}
  \lvert  u_r \rvert_{1-1\slash p, p}^p=r^{m-p} \lvert  u  \rvert_{1-1\slash p, p}^p  {\rm \ for \ } u: \R^{m-1} \to \R^\ell. 
  \end{equation}
   
   \medskip
 \noindent  
  {\it The gluing process.}  We define first  the set of  points $\{\fM_\mi\}_{\mi \in \N}$ in $\R^{m-1}$  where the copies of the maps $\mathfrak U_m^k$ will be glued by
\begin{equation}
\label{toujoursitude} 
\fM_\mi=  \left (   \underset {j=0} {\overset {\mi} \sum}   \updelta_\mj \right) \vec e_1 {\rm \ where \ }  \vec e_1=(1,\ldots ,0)\in \R^{m-1}, {\rm \ for \ } \mi \in \N, 
\end{equation}
  and  where we have set    
  $$
  \updelta_\mi=\frac{1}{\ra \mi  (\log \mi)^2}  {\rm \ for \ }  i \in \N^*, 
  {\rm \ with \ }
  \ra=2 \underset {j=0} {\overset {+ \infty} \sum}   \frac{1}{\mi  (\log \mi)^2}< + \infty.
  $$ 
 It follows that  the points $\fM_\mi$ are all on the segment joining the origin to the point 
 $$\fM_\star=\frac 12 \vec e_1= (\frac 12 , 0, \ldots, 0), $$ 
 converging  to the point $\fM_\star$ as $\mi \to + \infty$.
We  then  consider  a sequence of radii $ \displaystyle{(\mathfrak r_{\mathfrak i})_{\mi\in \N}}$   such that $ 0< \mr_\mi < \frac 1 4 \inf  \{\updelta_\mi, \updelta_{\mi-1}\}$ 
  and   the   corresponding collection of disjoint balls $ {(B_\mi)}_{\mi\in \N}$ given  by 
$$B_\mi \equiv \B^{m-1}(\fM_i, \mr_i)  {\rm \ for \   } \mi \in \N,  \ {\rm\  so \  that  \ }
 {\rm  dist } (B_\mi,  B_{\mathfrak j})\geq \frac 12  \sup\{ \updelta_i, \updelta_j\}  {\rm \ and \ }
 \underset {i\in \N} \cup B_\mi \subset \B^{m-1}. 
$$
    We  finally  introduce   a sequence of integers $\displaystyle{(\rk_\mi)_{\mi \in \N}}$ and   define  the map $\uobst $ on $\R^{m-1}$  as 
  \begin{equation}
  \label{mathcalitude}
\mathcal \uobst (x)=\mathfrak U_m^{\rk_\mi}\left (\frac {x-\fM_\mi}{\mr_\mi}\right) \ {\rm\  if \ } x \in B_\mi, \ \ 
  \mathcal U(x)=\mq_0  \  {\rm \  if \ } x \in \R^{m-1}\setminus \underset  {\mi \in \N}  \cup B_\mi.
  \end{equation} 
  The  next two results, which are directly connected to the scaling laws \eqref{scalingprop1} and  \eqref{scalsemi} reduce the constructionn of $:uobst$  to  the search of appropriate sequences  $ \displaystyle{(\mathfrak r_{\mathfrak i})_{\mi\in \N}}$ and  $\displaystyle{(\rk_\mi)_{\mi \in \N}}$. The first deals  with the trace semi-norm of $\uobst$.

  \begin{lemma}
  \label{thrace}
  Assume that 
 \begin{equation}
 \label{hypothesitude}
 \underset { \mi \in \N} \sum   \rk_\mi^{p-1}  \mr_i^{m-p}   <+\infty  {\rm \ and \  } \
 \mr_\mi  \leq  \frac {1}{16} \updelta_i.
 \end{equation}
 Then  the map $\uobst$ defined in \eqref{mathcalitude}  belongs to $\TrLqmp (\mN)$.
 
   \end{lemma}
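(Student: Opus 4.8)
\noindent\emph{Sketch of the intended argument.} The plan is to verify directly the three conditions in the definition \eqref{tracer} of $\TrLqmp(\mN)$. Two of them are immediate from \eqref{mathcalitude}: $\uobst(x)\in\mN$ for a.e.\ $x$ since each building block $\mathfrak U_m^{\rk_\mi}$ is $\mN$-valued and $\mq_0\in\mN$, and $\uobst=\mq_0$ on $\R^{m-1}\setminus\B^{m-1}$ since $\bigcup_{\mi}B_\mi\subset\B^{m-1}$. So everything reduces to showing that, under \eqref{hypothesitude}, $\uobst-\mq_0$ has finite $\Wtrp(\R^{m-1},\R^\ell)$ norm. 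The $L^p$ part is the easy one: rescaling $B_\mi$ onto $\B^{m-1}$ gives $\int_{B_\mi}|\uobst-\mq_0|^p=\mr_\mi^{m-1}\Vert\mathfrak U_m^{\rk_\mi}-\mq_0\Vert_{L^p(\R^{m-1})}^p$, and since $\Vert\cdot\Vert_{L^p}\le\Vert\cdot\Vert_{1-1/p,p}$, the trace-norm bound in \eqref{cruxitude} yields $\int_{B_\mi}|\uobst-\mq_0|^p\le C\,\mr_\mi^{m-1}\rk_\mi^{p-1}\le C\,\mr_\mi^{m-p}\rk_\mi^{p-1}$ (as $\mr_\mi<1$, $p>1$), a summable quantity by the first condition in \eqref{hypothesitude}.

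For the Gagliardo semi-norm \eqref{seminorm}, setting $\Omega:=\bigcup_{\mi}B_\mi$, I would split the double integral over $\R^{m-1}\times\R^{m-1}$ into the blocks $B_\mi\times B_\mi$, the blocks $B_\mi\times B_\mj$ with $\mi\ne\mj$, the blocks $B_\mi\times(\R^{m-1}\setminus\Omega)$, and their transposes (the block $(\R^{m-1}\setminus\Omega)^2$ contributes nothing, $\uobst$ being constant there). On a diagonal block, the substitution $x\mapsto(x-\fM_\mi)/\mr_\mi$ together with the scaling identity \eqref{scalsemi} bounds the contribution by $\mr_\mi^{m-p}\,|\mathfrak U_m^{\rk_\mi}-\mq_0|_{1-1/p,p}^p\le C\,\mr_\mi^{m-p}\rk_\mi^{p-1}$, again by \eqref{cruxitude} (which itself follows from the Lipschitz bound in \eqref{sept} and the interpolation inequality \eqref{frodonet}); the sum over $\mi$ converges by \eqref{hypothesitude}.

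The off-diagonal blocks are the substantive part. The key point is that $\mathfrak U_m^{\rk_\mi}=\mq_0$ on $\partial\B^{m-1}$ and has Lipschitz constant $\le\rc_{m,1}\rk_\mi$ by \eqref{sept}, so that for $x\in B_\mi$, with $d_\mi(x):=\dist(x,\partial B_\mi)$,
\[
|\uobst(x)-\mq_0|\ \le\ \omega_\mi(x):=\min\Bigl(2\rL,\ \rc_{m,1}\rk_\mi\,\frac{d_\mi(x)}{\mr_\mi}\Bigr),
\]
while $|x-y|\ge d_\mi(x)$ whenever $y\notin B_\mi$, hence $\int_{\R^{m-1}\setminus B_\mi}|x-y|^{-(p+m-2)}\,dy\le C\,d_\mi(x)^{1-p}$ (finite because $p>1$). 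Using $|a-b|^p\le 2^{p-1}(|a-\mq_0|^p+|b-\mq_0|^p)$ on the cross blocks, summing the inner $y$-integrals over all the (pairwise disjoint) balls $\ne B_\mi$, and adding the block against $\R^{m-1}\setminus\Omega$, all the off-diagonal mass attached to $B_\mi$ is controlled by $C\int_{B_\mi}\omega_\mi(x)^p\,d_\mi(x)^{1-p}\,dx$. I would then split $B_\mi$ along the level set $d_\mi(x)=\rho_\mi:=2\rL\mr_\mi/(\rc_{m,1}\rk_\mi)$, use the linear part of $\omega_\mi$ inside the thin shell $\{d_\mi<\rho_\mi\}$ and the bound $2\rL$ outside it, and carry out the elementary radial integration; here one uses $p\ge\rmc=\mpc(\mN)+1\ge 3$, so that near $0$ the term $\int s^{1-p}\,ds$ dominates and $\int_{\rho_\mi}^{\mr_\mi}s^{1-p}\,ds\le C\rho_\mi^{2-p}$. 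This produces $\int_{B_\mi}\omega_\mi^p\,d_\mi^{1-p}\,dx\le C\,\mr_\mi^{m-p}\,\rk_\mi^{p-2}$, and since $\rk_\mi\ge 1$ and $p-2<p-1$, the sum over $\mi$ is again $\le C\sum_{\mi}\rk_\mi^{p-1}\mr_\mi^{m-p}<+\infty$. Combined with the $L^p$ and diagonal estimates, this shows $\Vert\uobst-\mq_0\Vert_{1-1/p,p}<+\infty$, i.e.\ $\uobst\in\TrLqmp(\mN)$.

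The delicate step is this last estimate: with only the crude bound $|\uobst-\mq_0|\le 2\rL$ one is left with $\int_{B_\mi}d_\mi(x)^{1-p}\,dx$, which carries, besides the correct scale $\mr_\mi^{m-p}$, an \emph{extra} factor $\rho_\mi^{-(p-2)}\sim\rk_\mi^{p-2}\mr_\mi^{-(p-2)}$ from the singularity at $\partial B_\mi$, forcing the non-summable power $\rk_\mi^{p}$; it is the linear vanishing of $\uobst-\mq_0$ near $\partial B_\mi$, at the controlled rate $\rc_{m,1}\rk_\mi/\mr_\mi$, that restores the needed gain $\rk_\mi^{p-2}$. An alternative route sidestepping the double-integral estimate is to build an explicit $W^{1,p}$ extension of $\uobst$ to $\R^{m-1}\times[0,1]$ by gluing, over the pairwise disjoint cylinders $\B^{m-1}(\fM_\mi,2\mr_\mi)\times[0,\mr_\mi]$ (disjoint because $\dist(B_\mi,B_\mj)\ge\tfrac12\sup\{\updelta_\mi,\updelta_\mj\}$ and $\mr_\mi\le\tfrac1{16}\updelta_\mi$), suitably rescaled and translated copies of the linear extensions $\Text(\mathfrak U_m^{\rk_\mi}-\mq_0)$ — cut off in the vertical variable so as to meet the constant $\mq_0$ near the tops of the cylinders — together with $\mq_0$ elsewhere; then \eqref{extension}, the trace-norm bound in \eqref{cruxitude} and the energy scaling \eqref{scalingprop1} give $\int_{\R^{m-1}\times[0,1]}|\nabla(\cdot)|^p\le C\sum_{\mi}\rk_\mi^{p-1}\mr_\mi^{m-p}<+\infty$, and the trace theorem concludes.
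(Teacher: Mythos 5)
Your argument is correct, but it organizes the estimate of the Gagliardo seminorm differently from the paper. The paper first proves a general localization lemma (Lemma \ref{trouville}): for a bounded map supported on well-separated balls, the seminorm \eqref{seminorm} is bounded by the sum of localized trace energies computed over the \emph{doubled} balls $\B^{m-1}(2\mr_\mi,\fM_\mi)$ plus remainder terms estimated crudely by $C_m\Vert u\Vert_\infty^p\sum_\mi \mr_\mi^{m-p}$; since all pairs $(x,y)$ with both points near $\partial B_\mi$ fall into the doubled-ball blocks, the cross terms never see the singularity $|x-y|^{-(p+m-2)}$ at scales below $\mr_\mi$, and only the $L^\infty$ bound $\Vert \uobst-\mq_0\Vert_\infty\le 2\rL$ is needed there; the localized energies are then handled exactly as your diagonal blocks, via \eqref{scalsemi} and \eqref{cruxitude}. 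You instead split exactly along $\partial B_\mi$, so your cross terms do carry the non-integrable weight $d_\mi(x)^{1-p}$, and you compensate by exploiting the quantitative linear vanishing of $\mathfrak U_m^{\rk_\mi}$ at $\partial\B^{m-1}$ coming from the Lipschitz bound in \eqref{sept}; this is the genuinely different ingredient, and your computation (split at $d_\mi=\rho_\mi\sim \rL\mr_\mi/\rk_\mi$, giving $C\rk_\mi^{p-2}\mr_\mi^{m-p}$ per ball, with the obvious adjustment when $\rho_\mi>\mr_\mi$ for small $\rk_\mi$) is sound and summable under \eqref{hypothesitude}, just as the paper's bound \eqref{localized}. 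What each route buys: the paper's lemma is more robust, applying to arbitrary bounded maps supported on the balls (no Lipschitz regularity of the blocks is needed, only the $8$-separation \eqref{loin}, which the hypothesis $\mr_\mi\le\frac1{16}\updelta_\mi$ guarantees), whereas your version needs only disjointness of the balls but uses the specific structure of the $\mathfrak U_m^{\rk_\mi}$; your closing remark that the crude $L^\infty$ bound alone would fail is accurate \emph{for your decomposition}, and the doubled-ball trick is precisely how the paper sidesteps it. Your sketched alternative via an explicitly glued $W^{1,p}$ extension and the trace theorem would also work, at the price of controlling the vertical cut-offs, but neither the paper nor your main argument needs it.
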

   
    The second result  concernes the  energy of the extension. 
\begin{lemma}
\label{lemmitude}
 Assume that  $\rmc\equiv \mpc(\mN) +1\leq p<m$. Then we have
\begin{equation}
\label{labelitude}
\Ext_{m, p}(\uobst) \geq \underset { \mi \in \N}  \sum \rk_\mi^p \mr_i^{m-p}. 
\end{equation}
\end{lemma}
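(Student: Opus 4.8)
The plan is to establish the lower bound $\Ext_{m,p}(\uobst) \geq \sum_{\mi\in\N}\rk_\mi^p\,\mr_\mi^{m-p}$ by localizing the energy of an arbitrary competitor near each gluing site $\fM_\mi$ and invoking the building-block estimate from Proposition \ref{pirate}. If the defining set in \eqref{gammap} is empty there is nothing to prove, so fix any $U\in W^{1,p}_{\rm loc}(\mathcal D_m,\mN)$ with $U(\cdot,0)=\uobst$ in the sense of traces. The first step is geometric bookkeeping: over the ball $B_\mi=\B^{m-1}(\fM_\mi,\mr_\mi)$ the half-cylinder $\Cyl_\mi \equiv \B^{m-1}(\fM_\mi,\tfrac32\mr_\mi)\times[0,\tfrac34\mr_\mi]$, rescaled by $\mr_\mi$ and recentred at the origin, is exactly $\Cyl^m(3/2)$ as in \eqref{chapeau}; since the balls $B_\mi$ are pairwise disjoint with ${\rm dist}(B_\mi,B_\mj)\geq\tfrac12\sup\{\updelta_\mi,\updelta_\mj\}$ and $\mr_\mi\leq\tfrac14\inf\{\updelta_\mi,\updelta_{\mi-1}\}$, one checks these enlarged cylinders $\Cyl_\mi$ are still pairwise disjoint (the factor $3/2<2$ leaves room, and off $\bigcup B_\mi$ the boundary datum is the constant $\mq_0$). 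Hence $\rE_p(U,\mathcal D_m)\geq \sum_\mi \rE_p(U,\Cyl_\mi)$.

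The second step is the per-site estimate. On $\Cyl_\mi$ the map $V_\mi(y,s)\equiv U(\fM_\mi+\mr_\mi y,\mr_\mi s)$ lies in $W^{1,p}_{\rm loc}(\Cyl^m(3/2),\mN)\subset \mathfrak W_m(\mathfrak U_m^{\rk_\mi})$ because its trace on $\R^{m-1}\times\{0\}$ is precisely $\mathfrak U_m^{\rk_\mi}$ by \eqref{mathcalitude}; therefore, since $p\geq\rmc=\mpc+1>\mpc$, Jensen/Hölder on the bounded domain $\Cyl^m(3/2)$ together with the definition \eqref{cradoc} gives $\rE_p(V_\mi,\Cyl^m(3/2))\geq c\,\rE_{\mpc}(V_\mi,\Cyl^m(3/2))^{p/\mpc}\geq c\,\bigl(\mathcal I^{\rm xt}_m(\mathfrak U_m^{\rk_\mi})\bigr)^{p/\mpc}\geq c'\,\rk_\mi^{p}$, where the last inequality uses the lower bound $\mathcal I^{\rm xt}_m(\mathfrak U_m^{\rk_\mi})\geq \rc_{m,2}\rk_\mi^{\mpc}$ from \eqref{sept}. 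Undoing the scaling via \eqref{scalingprop1}, $\rE_p(U,\Cyl_\mi)=\mr_\mi^{m-p}\rE_p(V_\mi,\Cyl^m(3/2))\geq c'\,\rk_\mi^p\,\mr_\mi^{m-p}$. Summing over $\mi$ and absorbing the uniform constant (or, more cleanly, building the constant into the earlier choice of $\rc_{m,2}$ so that the bare inequality \eqref{labelitude} holds) yields the claim; taking the infimum over all admissible $U$ finishes the proof.

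The main obstacle I anticipate is the trace-compatibility bookkeeping under localization and rescaling: one must be sure that the restriction $U|_{\Cyl_\mi}$ genuinely has the right trace on the bottom face — this requires that the cylinders $\Cyl_\mi$ meet $\R^{m-1}\times\{0\}$ only inside the region where $\uobst$ is the rescaled copy $\mathfrak U_m^{\rk_\mi}$ (no overlap with a neighbouring copy and no encroachment on the constant region in a way that would spoil the homotopy-type normalization built into $\mathfrak W_m$), which is exactly what the spacing hypothesis $\mr_\mi\leq\tfrac{1}{16}\updelta_\mi$-type conditions and the radius constraint $\mr_\mi<\tfrac14\inf\{\updelta_\mi,\updelta_{\mi-1}\}$ are arranged to guarantee, but the factor $3/2$ in the cylinder definition \eqref{chapeau}–\eqref{cradoc} must be reconciled with those constraints. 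A secondary, routine point is the passage from the $\mpc$-energy lower bound to a $p$-energy lower bound on a fixed bounded domain, which is just Hölder but needs the exponent inequality $p>\mpc$ that holds throughout the range $\rmc\leq p<m$.
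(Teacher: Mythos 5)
Your proposal is correct and follows essentially the same route as the paper: localize the energy of a competitor to the disjoint rescaled cylinders $\Cyl^m(\tfrac32\mr_\mi)+\fM_\mi$, rescale, pass from the $p$-energy to the $\mpc$-energy by H\"older (the paper packages this step as inequality \eqref{wolferine} relating $\mathcal E^{\rm xt}_{m,p}$ and $\mathcal I^{\rm xt}_m$), invoke the lower bound \eqref{sept} from Proposition \ref{pirate}, and sum; your remark that the multiplicative constant must be absorbed matches the constant $C_m$ appearing in the paper's own chain of inequalities.
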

The proof of Proposition \ref{mainprop} is then completed by showing that there exists sequences  $ \displaystyle{(\mathfrak r_{\mathfrak i})_{\mi\in \N}}$ and  $\displaystyle{(\rk_\mi)_{\mi \in \N}}$ such that \eqref{hypothesitude} holds and such that 
\begin{equation}
\label{moyennitude}
\underset { \mi \in \N}  \sum \rk_\mi^p \mr_i^{m-p}=+\infty. 
\end{equation}
The fact that this is possible is related to the different exponents for $\rk_\mi$   ($p-1$ in the first one and $p$ in the second) in both inequality, a property which ultimately goes back to  \eqref{cruxitude}. 
\subsection{Outline of the paper}
\label{outline}
This paper is organized as follows. In the next Section we describe the relationship between energy  estimates and    topological invariants, in the case the exponent for the energy integral equals the dimension. In particular, we provide the proof to Proposition  \ref{clefkey}.  Section \ref{jack} is devoted to the the proof of  Proposition \ref{pirate}, whereas  the proof to Proposition \ref{mainprop} is given in Section \ref{sparrow}.  The proofs of the main theorems are finally completed in Sections \ref{black} and \ref{pearl}. 
 	
\numberwithin{theorem}{section} \numberwithin{lemma}{section}
\numberwithin{proposition}{section} \numberwithin{remark}{section}
\numberwithin{corollary}{section}
\numberwithin{equation}{section}

\section{Topology  and energy estimates }
\label{topenergy}
The main purpose of this section is to provide the proof of Proposition  \ref{clefkey}.  We split it into two parts, each of which corresponds to one of the two statements of the proposition, which  require however different assumptions.   the main focus is on the numbers $\upnu_\fp(\llbracket v \rrbracket)$ defined in   \eqref{matou}.  
We start the analysis with an explicit upper bound.  
\subsection{ An upper bound for  the energy in homotopy classes}

  Let 
$\mathfrak p\in \N^*$. We assume throughout this subsection that the $\mathfrak p$-th homotopy group of $\mN$ is non trivial that is $\pi_{\mathfrak p} (\mN)\not = \{0\}$  and that it is infinite. More precisely, we assume that there are elements $\upsigma_1, \ldots, \upsigma_{\mathfrak s}$ in $\pi_{\mathfrak p} (\mN)$  such that the sub-group 
$\mathfrak G_{i}$  generated by $\upsigma_i$  is infinite, that is 
\begin{equation}
\label{sim}
 \mathfrak G_i=\{ \upsigma_i^\ell, \ell \in \Z\}\sim \Z.
\end{equation}

\begin{lemma}
\label{upperb}
Assume that \eqref{sim}  holds.  There exists   a constant $c_1>0$ , such that  given any $i=1, \ldots, \ms$   and given any $d \in \Z$,   there exists  a map  $\Phi_d^i \in  
C^1_{\mq_0} (\B^\fp, \mN )$  such that $\llbracket  \Phi_d^i  \rrbracket= \upsigma_i^d$
and 
 \begin{equation}
 \label{labo}
\vert \nabla \Phi_d^i \vert (x)^\fp \leq \rc_0 \vert  d  \vert , {\rm \ for  \ any \  } x \in \B^\fp.  
\end{equation}
\end{lemma}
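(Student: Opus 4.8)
The plan is to build the map $\Phi_d^i$ by concatenating $|d|$ copies of a fixed representative of $\upsigma_i$, arranged on a grid of small cubes inside $\B^\fp$, and to check that this placement forces the stated pointwise gradient bound. First I would fix, once and for all, a smooth representative $\phi_i \in C^1_{\mq_0}(\B^\fp,\mN)$ of the generator $\upsigma_i$, extended by $\mq_0$ outside $\B^\fp$; since $\mN$ is smooth and compact, $\phi_i$ has a finite Lipschitz constant $L_i$, and we set $\rc_0$ in terms of $\max_i L_i^\fp$ and a dimensional packing constant. The key geometric fact I would invoke is that the homotopy class $\upsigma_i^d$ (the $d$-fold $\star$-product) is represented by the map obtained by subdividing $\B^\fp$ into $|d|$ disjoint congruent sub-balls (or sub-cubes) $B_1,\dots,B_{|d|}$, each of radius comparable to $|d|^{-1/\fp}$, placing on $B_j$ an affinely rescaled copy of $\phi_i$ (oriented according to $\mathrm{sgn}(d)$), and setting the map equal to $\mq_0$ elsewhere. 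That this indeed represents $\upsigma_i^d$ is the standard description of the group law on $\pi_\fp$ via ``boundary-connected sum'' of the domain spheres, so I would only sketch it; the compatibility of the $\mq_0$ boundary conditions on each $B_j$ makes the concatenation continuous (indeed $C^1$ after a mild smoothing at the gluing interfaces).

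Next I would verify the gradient estimate \eqref{labo}. On each sub-ball $B_j$ of radius $\rho \sim \rc \, |d|^{-1/\fp}$ the map is $x \mapsto \phi_i\big((x - c_j)/\rho\big)$, whose gradient has sup-norm at most $L_i/\rho \lesssim L_i |d|^{1/\fp}$; raising to the power $\fp$ gives $|\nabla \Phi_d^i|^\fp \lesssim L_i^\fp |d|$ pointwise on $B_j$, and the map is locally constant (hence has zero gradient) off $\bigcup_j B_j$. Choosing $\rc_0 = \rC_{\fp}\max_i L_i^\fp$ with $\rC_\fp$ absorbing the packing constant yields \eqref{labo} for every $x \in \B^\fp$. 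The only slightly delicate point is that to keep the map genuinely $C^1$ one must smooth across the finitely many interfaces between adjacent $B_j$'s and the ambient constant region; I would do this in a shell of thickness comparable to $\rho$, where the map transitions between $\mq_0$ and itself, so the extra gradient contribution there is again $O(|d|^{1/\fp})$ and does not affect the bound up to adjusting $\rc_0$. Alternatively, one can simply work with Lipschitz (rather than $C^1$) maps and invoke density, since the statement of Proposition \ref{clefkey} only needs Lipschitz representatives anyway.

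The main obstacle I anticipate is \emph{not} the energy estimate, which is a routine scaling computation, but rather justifying cleanly that the $|d|$-fold concatenation represents $\upsigma_i^d$ with a \emph{uniform} constant $\rc_0$ independent of $d$ — i.e.\ making precise that the group operation in $\pi_\fp(\mN)$ is realized geometrically by side-by-side placement of domain balls with no ``interaction cost''. This is where the hypothesis \eqref{sim} that $\mathfrak G_i \simeq \Z$ is used: it guarantees $\upsigma_i^d \neq 0$ for $d \neq 0$ so the construction is not vacuous, but more importantly it lets us speak of the $d$-fold product unambiguously. I would handle this by appealing to the classical model of $\pi_\fp$ as homotopy classes of maps $(\S^\fp,\ast)\to(\mN,\mq_0)$ with the pinch-map comultiplication, transported through the stereographic identification $C^0_{\mq_0}(\B^\fp,\mN)\cong C^0(\S^\fp,\mN)$ already recalled in the excerpt, and noting that the pinch map $\S^\fp \to \bigvee_{j=1}^{|d|}\S^\fp$ can be chosen so that the $j$-th wedge summand pulls back to the sub-ball $B_j$. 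With that identification in hand the verification of both the homotopy class and the gradient bound is immediate, and the lemma follows.
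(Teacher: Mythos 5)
Your proposal is correct and is essentially the paper's own argument: the paper likewise fixes a representative $\Phi^i$ of $\upsigma_i$ and glues $|d|$ rescaled copies of it on disjoint balls of radius comparable to $|d|^{-1/\fp}$ (using an explicit grid plus a small corrective family of points to get exactly $d$ copies), obtaining the class $\upsigma_i^d$ by additivity under disjoint gluing and the bound \eqref{labo} by scaling. The only differences are cosmetic: your packing/smoothing remarks replace the paper's explicit index sets $A_\fp(d)$, $B_\fp(d)$, and the paper treats $d<0$ by the same construction applied to a representative of $\upsigma_i^{-1}$, just as you indicate.
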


\begin{proof}  We  start with the  case $d=1$. Given $i=1, \ldots, \ms$ we choose an arbitrary map $\Phi^i= \Phi_1^i \in 
C^1_{\mq_0} (\B^\fp, \mN )$  such that $\llbracket  \Phi^i  \rrbracket= \upsigma_i$ and  set 
\begin{equation}
\label{delco}
 \rc_1=\Vert \nabla \Phi^i \Vert_{L^\infty (\B^\fp)}< +\infty. 
 \end{equation}
 It follows that \eqref{labo} is fullfilled in the case $d=1$, provided $\rc_0\geq \rc_1$. We next turn to the case  $d \geq 1. $ We introduce   the set of indices 
 $$A_\fp(d)=\{I=(i_1, i_2, \ldots i_\fp \}, i_k \in \N^*, (i_k)^\fp \leq d\}, $$ so that the total number of elements in $A_\fp$ is given by 
  $ \displaystyle{\sharp (A_\fp(d))= \left[ d^{\frac{1}{\fp} }\right]^\fp }$, where for $t\in \R^+$,  the symbol $[t]$ denotes  the largest integer less of equal to $t$.    Notice that,  by a convexity argument, we have
  $$d- \fp d^{1-\frac {1} {\fp}}  \leq \sharp (A_\fp(d))  \leq d {\rm \  \ so \  that  \  }  
0\leq r_d \equiv d-\sharp (A_\fp(d))  \leq   \fp d^{1-\frac {1} {\fp}} < \sharp (A_\fp(d)), 
  $$
 where the last inequality holds provided $d$ is sufficiently large.   We consider a subset $B_p(d)$ of $r_d$ distinct elements  in  $A_\fp(d)$.  We introduce the set of points $\Upsilon=\Upsilon_A \cup \Upsilon_B$, where $\Upsilon_A\equiv \{a_I\}_{I\in A_\fp(d)}$ and $\Upsilon_B= \{b_I\}_{I\in B_\fp(d)} $, the points $a_I$ and $b_I$ being defined, setting
  $h=d^{-\frac{1}{\fp}}$ by 
$$ a_I=\frac {h}{4} I  {\rm \ for \   } \ I \in A_\fp(d)  {\rm \ and \ } 
b_I=\frac {h}{4} I + (\frac 12, \ldots, 0)  {\rm \ for \   } \ I \in B_\fp(d),  
$$
so that the mutual distance between  distinct points in $\Upsilon$ is at least $\displaystyle{\frac h 4}$ and 
$ \sharp \Upsilon=d.$
We  then  define   the map $\Phi^i_d$ as
\begin{equation}
\label{defphi}
\left\{
\begin{aligned}
\Phi^i_d (x)&= \Phi^i ( \frac{ x-a_I}{ 8h})  {\rm \ for \ } x\in \B^\fp(a_I, \frac h 8), I \in A_\fp(d)  \\
\Phi^i_d (x)&= \Phi^i ( \frac{ x-b_I}{ 8h})  {\rm \ for \ } x\in \B^\fp(b_I, \frac h 8), I \in B_\fp(d)  \\
\Phi^i_d (x)&=\mq_0  {\rm \ otherwise.   \ } 
\end{aligned}
\right.
\end{equation}
Since $\Phi^i_d$ is obtained gluing $d$ scaled copies  copies of $\Phi^i$ its homotopy class is $\upsigma_i^d$, whereas combining \eqref{delco} with \eqref{defphi} we obtain  \eqref{labo} choosind $\rc_0=8  \rc_1$. This establishes the theorem for $d>0$. The proof is similar for $d<0$. 
\end{proof}

  Integrating the bound \eqref{labo} on  $\B^\fp$ and using the function $\Phi_d^i$ as a test function in the definition \eqref{matou} of  $\upnu_\fp(\upsigma_i^d)$ we are led to the upper bound
   \begin{equation}
   \label{gratis}
   \upnu_\fp(\upsigma_i^d)  \leq  C_2 \vert d  \vert, {\rm  \ for \ any \ } d \in \Z, 
   \end{equation}
where $C_2>0$ is some constant which does not depend on $d$.  This upper bound  actually corresponds to the right part of inequality \eqref{troc} and, 
 as seen above, this inequality does only require  the subgroup $\mathfrak G_i$ to be  infinite.   A natural question is  to determine whether  there exists also  in that case  a lower bound of the same magnitude, i.e. to know if  there exists a constant $C_i>0$ such that 
 \begin{equation}
 \label{cassegrain}
 \upnu_\fp(\upsigma_i^d)  \geq  C_i \vert d  \vert.
 \end{equation}
   Such a lower bound can be established   for instance if $\mN=\S^\fp$ using degree theory.  More precisely, in the case of the sphere $\S^\fp$, we have $\pi_\fp(\S^\fp)=\Z$, the unique generator of this  homotopy group being  the homotopy class of  the identity. In this case, the degree labels the order in the homotopy group. It is given by the integral formula
   \begin{equation}
   \label{formula}
    \deg u=\int_{\B^\fp} u^* (\omega) d\sigma. 
   \end{equation}
where $\omega$ is  a normalized volume form of the sphere and $^*$ denotes pull-back.    Formula \eqref{formula} yields rather directly to the upper bound \eqref{cassegrain}, in view of the pointwise inequality $\vert u^*(\omega)\vert \leq C \vert \nabla u \vert^\fp$. 
    It  turns out  however that the  bound \eqref{cassegrain} does not  hold for  general manifolds, even if \eqref{sim} holds.    This was proved for instance in \cite{Ri} for the case $\fp=3$ and $\mN=\S^2$ for which $\pi_3(\S^2)=\Z$. It is shown there that $\upnu_\fp(\upsigma^d)\leq  C \vert d \vert^{\frac 3 4}$, which contradicts \eqref{cassegrain} for large values of  $\vert d  \vert.$ 
 

\subsection{ A lower  bound for  the energy in homotopy classes}
\label{lowerenergy}
  In view of the previous remark and in order  to address  the bound \eqref{cassegrain}, we need to impose additional conditions on $\mN$.  In this subsection, we assume that $\fp \in \N^* \setminus \{1\} $ and impose that  the manifold $\mN$ is $(\fp-1)$-connected, that is we assume  throughout  that
\begin{equation}
\label{souple}
\pi_1(\mN)=\ldots =\pi_{\fp-1}(\mN)=\{0\}   {\rm \ and  \ }   \pi_\fp(\mN)\not =\{0\}.
\end{equation} 
This  kind of assumption is  for instance central in the statement of the Hurewicz isomorphism theorem and has also  been used in the context of Sobolev maps in several places in the literature  (see e.g. \cite{HL,Haj, P, PR} among others).  The main  feature which is used there is that $(\fp-1)$-connected manifolds possess strong analogies  with the sphere  $\S^\fp$, or  more precisely  with joints of $\fp$-dimensional spheres.  In particular the homotopy group has a finite number of generators $\upsigma_1, \ldots, \upsigma_{\mathfrak s}$  verifying  \eqref{sim}, corresponding to each of the spheres.  The lower bound for the $\fp$-energy  of 
$\S^\fp$-valued maps can be generalized to $(\fp-1)$-connected manifolds as follows: 
\begin{lemma}
\label{ouistiti}
Assume that \eqref{souple} holds. Then $\pi_\fp(\mN)$ is infinite. Moreover, if $\upsigma_1$ is a generator such that  \eqref{sim} holds, then  there exists a constant $C_i>0$ such that, for any $d \in \Z$, we have 
\begin{equation}
   \label{gratuitude}
   \upnu_\fp(\upsigma_i^d)  \geq  C_i \vert d  \vert. 
   \end{equation}
\end{lemma}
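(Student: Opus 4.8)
The plan is the following: the lower bound \eqref{gratuitude} will be obtained by a \emph{degree-type} argument, while the infiniteness of $\pi_\fp(\mN)$ is a preliminary consequence of Hurewicz. Since $\mN$ is $(\fp-1)$-connected with $\fp\geq 2$, the Hurewicz isomorphism theorem provides an isomorphism $h_\fp\colon\pi_\fp(\mN)\xrightarrow{\ \sim\ }H_\fp(\mN;\Z)$, and the latter group is infinite for a closed $(\fp-1)$-connected manifold (Poincar\'e duality); in particular $\pi_\fp(\mN)$ possesses elements of infinite order, and \eqref{sim} exactly selects such elements $\upsigma_1,\dots,\upsigma_\ms$. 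The core of the argument is then to produce a closed differential $\fp$-form $\omega$ on $\mN$ whose periods measure the order $d$ inside $\mathfrak G_i\simeq\Z$, and to exploit the pointwise bound $\vert w^*\omega\vert\lesssim\vert\nabla w\vert^{\fp}$ — here the choice of the \emph{critical} exponent $\fp$ for the energy is what makes the argument scale correctly — to turn this topological quantity into a coercive lower bound for the $\fp$-energy.

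The key construction is that of the detecting form. Fix an index $i$ with $\mathfrak G_i\simeq\Z$; then $\upsigma_i$ has infinite order, so $h_\fp(\upsigma_i)$ is a non-torsion class in $H_\fp(\mN;\Z)$, and therefore its image in $H_\fp(\mN;\R)$ is nonzero. By the de Rham theorem the natural pairing between $H^\fp_{\mathrm{dR}}(\mN)$ and $H_\fp(\mN;\R)$ is nondegenerate, so one may select a smooth closed $\fp$-form $\omega$ on $\mN$ with $\langle[\omega],h_\fp(\upsigma_i)\rangle=\lambda\neq 0$; set $\kappa=\Vert\omega\Vert_{L^\infty(\mN)}$, which is finite by compactness of $\mN$.

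Next one checks that, for any Lipschitz competitor $w\in{\rm Lip}_{\mq_0}(\B^\fp,\mN)$ in the definition \eqref{matou} of $\upnu_\fp$ (so that $w\equiv\mq_0$ on $\partial\B^\fp$), the quantity $I(w)=\int_{\B^\fp}w^*\omega$ depends only on $\llbracket w\rrbracket$ and is linear in the order. Indeed, identifying $w$ with $\tilde w\colon\S^\fp\to\mN$ as in the introduction, $I(w)=\int_{\S^\fp}\tilde w^{*}\omega=\langle[\omega],\tilde w_*[\S^\fp]\rangle=\langle[\omega],h_\fp(\llbracket w\rrbracket)\rangle$ by naturality of the pairing (homotopy invariance of the left-hand side, for $\omega$ closed, being Stokes' theorem, which survives passage to Lipschitz — equivalently $W^{1,\fp}$ — maps by smooth approximation); since $h_\fp$ is a homomorphism, $\llbracket w\rrbracket=\upsigma_i^d$ forces $I(w)=\lambda d$. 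On the other hand, $\omega$ being a $\fp$-form and $w$ being differentiable a.e.\ (Rademacher), one has $\vert (w^*\omega)(x)\vert\leq C_\fp\,\kappa\,\vert\nabla w(x)\vert^{\fp}$ for a.e.\ $x$. Combining,
\[
\vert\lambda\vert\,\vert d\vert=\vert I(w)\vert\leq\int_{\B^\fp}\vert w^*\omega\vert\leq C_\fp\,\kappa\,\rE_\fp(w,\B^\fp),
\]
and taking the infimum over all $w$ homotopic to $\upsigma_i^d$ yields $\upnu_\fp(\upsigma_i^d)\geq C_i\vert d\vert$ with $C_i=\vert\lambda\vert/(C_\fp\kappa)>0$. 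The case $d=0$ is trivial, and $d<0$ follows by replacing $\omega$ with $-\omega$.

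I expect the main obstacle to be the construction of $\omega$: one needs $(\fp-1)$-connectedness precisely so that the Hurewicz map is an \emph{isomorphism}, which is what guarantees that an infinite-order homotopy class is carried to a non-torsion homology class and hence is detected by a genuine real cohomology class. Without this hypothesis no such $\omega$ exists in general, and indeed, as recalled after Lemma \ref{upperb}, the bound \eqref{cassegrain} may then fail even when $\mathfrak G_i\simeq\Z$. The remaining point requiring some care — homotopy invariance and additivity of $I(w)$ for merely Lipschitz rather than smooth maps — is settled by strong $W^{1,\fp}$ approximation of $w$ by smooth $\mN$-valued maps, the $\fp$-energy bound above guaranteeing $L^1$-convergence of the forms $w^*\omega$ along the approximation.
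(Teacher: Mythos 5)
Your proof of the main estimate \eqref{gratuitude} is correct, but it follows a genuinely different route from the paper's. The paper (following \cite{PR,Haj}) argues via the skeleton: it triangulates $\mN$, uses that the $\fp$-skeleton $\mN^\fp$ of a $(\fp-1)$-connected manifold has the homotopy type of a wedge of $\fp$-spheres, builds smooth projections $\Pi_i:\mN^\fp\to\S^\fp$ so that degree theory on the sphere yields the linear lower bound \eqref{pimprenelle} for $\mN^\fp$-valued maps, and then transfers this bound to $\mN$-valued maps through the Hardt--Lin reprojection onto the skeleton with controlled energy \eqref{hhh}. You instead detect the class $\upsigma_i^d$ analytically: by the Hurewicz isomorphism the class $\upsigma_i$, of infinite order by \eqref{sim}, goes to a non-torsion class of $H_\fp(\mN;\Z)$, hence is seen by a closed $\fp$-form $\omega$ with $\langle[\omega],h_\fp(\upsigma_i)\rangle=\lambda\neq 0$; the identity $\int_{\B^\fp}w^*\omega=\lambda d$ for Lipschitz competitors in \eqref{matou} with $\llbracket w\rrbracket=\upsigma_i^d$, combined with the pointwise bound $\vert w^*\omega\vert\leq C_\fp\,\kappa\,\vert\nabla w\vert^{\fp}$, gives the conclusion. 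This is a calibration-type argument which generalizes the degree formula \eqref{formula} that the paper only invokes for $\mN=\S^\fp$; it is shorter, avoids the skeleton and reprojection machinery, and makes transparent where $(\fp-1)$-connectedness enters (injectivity of the Hurewicz map is what renders an infinite-order homotopy class cohomologically visible), in agreement with the failure of \eqref{cassegrain} for $\pi_3(\S^2)$ recalled after Lemma \ref{upperb} and in \cite{Ri}. The approximation point you raise (validity of homotopy invariance and of the value of $\int w^*\omega$ for Lipschitz rather than smooth $w$) is indeed harmless: mollifying and projecting back onto $\mN$ preserves the homotopy class and gives dominated convergence of the pulled-back forms. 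What the paper's route buys in exchange is a statement in terms of all the coordinates $\langle u\rangle_i$ simultaneously and a toolbox (the reprojection of \cite{HL}) already used elsewhere in the paper.

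One caveat: your justification of the preliminary assertion that $\pi_\fp(\mN)$ is infinite does not hold as stated. Hurewicz does give $\pi_\fp(\mN)\simeq H_\fp(\mN;\Z)$, but Poincar\'e duality alone does not force this group to be infinite: the Wu manifold $SU(3)/SO(3)$ is compact, simply connected, with $\pi_2\simeq H_2\simeq\Z_2$ finite; the duality argument only succeeds when $\dim\mN\leq 2\fp$. This does not affect your proof of \eqref{gratuitude}, which uses only the standing assumption \eqref{sim} that $\upsigma_i$ generates an infinite cyclic subgroup (and the paper's own sketch offers no more detail on this first assertion), but the claim should either be restricted accordingly or be treated as part of hypothesis \eqref{sim} rather than as a consequence of \eqref{souple}.
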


 Let us emphasize that this result is \emph{not new} and is actually  presumably well-known to the experts.   As a matter of fact, 
the result of Lemma \ref{ouistiti} can be directly deduced  as a special case  of Lemma 4.3 in \cite{PR}.  For sake of completeness however, we briefly explain the main ideas in the proof.

\begin{proof}[Sketch of the proof (following \cite{PR, Haj})]  
 The proof  relies on several observations, the   first ones being related  topological properties of the manifold $\mN$ we describe next.   

\bigskip
\noindent
{\it Topological background}.
We consider    some  smooth triangulation  $T$  of $\mN$ and denote by  $\mN^j$  the $j$-dimensional skeleton of $\mN$ for $1\leq j \leq \nu=\dim \mN$, so that 
$\mN^\nu=\mN$.  It turns out that, if $\mN$ is $(\fp-1)$- connected, then necessarily one has $\fp\leq \nu$ and   the 
 $\fp$-skeleton $\mN^\fp$ of $\mN$ has  the homotopy type of a joint of  $\mathfrak s$ spheres. Moreover,  the $\fp$- homotopy groups  of $\mN$ and $\mN^p$ coincide. We have therefore  
$$\displaystyle{ 
\mN^\fp\sim \underset{i=1} {\overset{\mathfrak s} \vee  } \S^\fp  {\rm \ and \ } \pi_\fp(\mN^\fp)= \pi_\fp(\mN).}
$$  
We denote by ${\tilde \upsigma}_i, \ldots, {\tilde \upsigma}_{\mathfrak s}$ the generators of $\mN^\fp$ which also correspond to generators of $\pi_\fp(\mN)$,  and set, for $\varphi \in C^0( \S^\fp, \mN^\fp)$
\begin{equation}
\label{decsigma}
 \langle  \varphi \rangle_{i, \fp}= d_i  {\rm \ if  \ } \ 
 \llbracket \varphi \rrbracket ={\tilde \upsigma}_1^{d_1}\star {\tilde \upsigma}_2^{d_2}\star\ldots\star 
 {\tilde \upsigma}_i^{d_i} \star  \ldots \star {\tilde \upsigma}_{\mathfrak s}^{d_{\mathfrak s}}.
 \end{equation}

 \medskip
 \noindent
 {\it Properties of maps in  $W^{1,\fp}(\S^\fp, \mN^\fp)$}. We restrict ourselves  for the moment to maps which take values on  the $\fp$-skeleton 
 $\mN^\fp \subset \mN$, and show that for such  a target the lower bound  \eqref{gratuitude} holds. 
Given $i=1, \ldots, \mathfrak s$, it can be proved that there exists a smooth \emph{"projection" map}  $\Pi_i: \mN^\fp \to \S^\fp$, with the property that, if $\varphi$ is a continous map from $\S^\fp$ to $\mN^\fp$, then $\Pi_i \circ \varphi \in C^0(\S^\fp, \S^\fp)$ with
\begin{equation}
\label{degradation}
{\rm  deg} (\Pi_i\circ \varphi)= \langle  \varphi \rangle_{i, \fp} {\rm \  for \ all \ }  \varphi \in C^0( \S^\fp, \mN^\fp).
\end{equation}
If $\varphi$ belongs moreover to  the space $W^{1,\fp} (\S^\fp, \mN^\fp)$,  then we have, since $\Pi_i$ is smooth, the pointwise inequality   $\vert \nabla (\Pi_i  \circ \varphi \vert \leq C \vert \nabla \varphi\vert$, so that 
\begin{equation}
\label{nounours}
\rE_\fp (\Pi_i \circ \varphi ) \leq C \rE_\fp  (\varphi). 
\end{equation}
On the other hand, since \eqref{gratuitude} holds for $\S^\fp$-valued maps thanks to degree theory,  we have, in view of \eqref{degradation}
$$
\rE_\fp  (\Pi_i \circ \varphi ) \geq  C \vert {\rm  deg} (\Pi_i\circ \varphi)  \vert
\geq C  \vert \langle  \varphi \rangle_{i, \fp}  \vert 
 $$
so that, combining  with \eqref{nounours}, we obtain, for  some constant  $C>0$, 
\begin{equation}
\label{pimprenelle}
\rE_\fp  (\varphi)  \geq   C \left \vert  \left  \langle  \varphi  \right \rangle_{i, \fp}  \right\vert
{\rm  \  \ for \ every \  } \varphi \in   W^{1,\fp}(\S^\fp, \mN^\fp).
\end{equation}

\medskip
 \noindent
 {\it   Projecting onto the $\fp$-skeleton  $\mN^k$}. This step  corresponds to an adaptation of reprojecton method introduced in \cite{HL},used for each of the individual  simplexes of the triangulation $T$.  This construction yields, for a  given   map $u \in {\rm Lip} (\S^\fp,\mN)$,  the existence of  another  map $\tilde u \in  {\rm Lip} (\S^\fp,\mN^\fp)$  such that, for some constant $C >0$ independent of $u$ 
\begin{equation}
\label{hhh}
\rE_\fp  (\tilde u)  \leq C \rE_\fp  (u )  {\rm \ and \  }  \
 \langle  \tilde  u \rangle_{i, \fp}= \langle    u \rangle_{i}  \ 
{\rm \   for  \ every \ } i=1, \ldots, \mathfrak s   
\end{equation}
and, moreover, if $u(x) \in \mN^\fp$  for some $x \in \S^\fp$, then we have $\tilde u(x)=u(x)$.   In \eqref{hhh}, we have set 
similar to \eqref{decsigma}
$$
 \langle    u \rangle_{i}=d_i   {\rm \ if  \ } \ 
 \llbracket u \rrbracket ={ \upsigma}_1^{d_1}\star { \upsigma}_2^{d_2}\star\ldots\star 
 {\upsigma}_i^{d_i} \star  \ldots \star { \upsigma}_{\mathfrak s}^{d_{\mathfrak s}}.
 $$
  Notice that  the construction of $\tilde u$ avec estimate \eqref{hhh} carries over to $W^{1,\fp}$ maps by a density argument.

 \medskip
 \noindent
 {\it  Proof of \eqref{gratuitude} completed}.   Consider some integer  $i \in \{1, \ldots, \mathfrak s\}$, some number $d \in  \Z$ and $u \in  W^{1,p} (\S^\fp,\mN^\fp)$ such that  $\langle    u \rangle_{i}=d$.   We claim that  there exists some constant $C>0$  which does not depend on $u$ nor on $d$ such that
 \begin{equation}
 \label{grouiner}
 \rE_p(u) \geq C \vert d\vert. 
 \end{equation}
 Indeed, in  view of the  results in previous paragraph, we may construct some map $\tilde u \in W^{1,p} (\S^\fp, \mN^\fp)$
such that 	$\rE_\fp  (\tilde u)  \leq C \rE_\fp  (u )$ and $ \langle  \tilde  u \rangle_{i, \fp}=d$.   Applying \eqref{pimprenelle} to $\tilde u$, we are led to  $\rE_\fp  (\tilde u)  \geq C \vert d \vert$. Combining the previous inequalities we  derive the proof of the claim \eqref{grouiner}. Finally, to establish \eqref{gratuitude}, it suffices to take the infimum in \eqref{grouiner} over all maps in the homotopy class. This completes the proof of Lemma \ref{ouistiti}.

\end{proof}

\subsection{Proof of Proposition \ref{clefkey}}	
\label{prkey}	
   We deduce from the definition of $\mpc$ that $\mN$ is $(\mpc-1)$-simply connected,  so that since \eqref{sim} holds
    for $\fp=\mpc$, we are in position to apply both Lemma \ref{upperb}  and Lemma  \ref{ouistiti}. Combining the  lower bound \eqref{gratis}  with the  upper bound  \eqref{gratuitude}, we derive  \eqref{troc}. Then,  choosing 
    $$ \mathfrak v_d^i= \Phi_d^i  {\rm \ for \ } d \in \Z, $$
     we observe that, thanks to \eqref{labo}, estimate   \eqref{gluts} is satisfied, which completes the proof.

\numberwithin{theorem}{section} \numberwithin{lemma}{section}
\numberwithin{proposition}{section} \numberwithin{remark}{section}
\numberwithin{corollary}{section}
\numberwithin{equation}{section}
\section{Proof of Proposition \ref{pirate} }
\label{jack}
\subsection{Introductory remarks}
\label{introrem}
We define  first  a few  quantities which  enter in the proof.
 For an integer $m \geq 1$,  an exponent $p>1$ and   given $u \in \TrLqmp(\mN)$ we introduce the  quantity 
   \begin{equation}
   \label{credoc}
    \mathcal E^{\rm xt}_{m, p}(u)=\inf \{\rE_{p} \left(U,\Cyl^m\left (3 \slash 2\right) \right),  U\in W_{\rm loc}^{1,p} (\Cyl^m\left (3 \slash 2\right), \mN),\ U(x, 0)=u(x) {\rm \ for  \ } x \in  \R^{m-1} \} .
   \end{equation}
      It follows from H\"older's inequality that for $p \geq \rmc$,
\begin{equation}
\label{wolferine}
 \mathcal I^{\rm xt}_{m}(u) \leq C_m \left( \mathcal E^{\rm xt}_{m, p}(u)\right)^{\frac {\mpc}{p}},
\end{equation}
where $\mathcal I^{xt}_m(u)$ is defined in \eqref{cradoc} and differs from  $\mathcal E^{\rm xt}_{m, p}$  by the choice of exponents both for the energy and the Sobolev maps, which are respectively $\mpc$ and $\rmc=\mpc+1$.
 On the other hand, it follows from the definition \eqref{credoc} that  we have the inequality
 \begin{equation}
 \label{classique}
     \mathcal E^{\rm xt}_{m, p}(u) \leq  \Ext_{m, p}(u),
 \end{equation}
  the main difference between these two quantities being that the domain of integration of the energy is smaller for the one on the left-hand side.  Combining \eqref{wolferine} with \eqref{classique}, we are led to  the lower bound for $\Ext_{m, p}(u)$ given by 
  \begin{equation}
  \label{but}
  \left( \mathcal I^{\rm xt}_{m}(u)\right)^{\frac{p} {\mpc}} \leq C_{m , p} \Ext_{m, p}(u),
  \end{equation}
  where $C_{m, p}>0$ denotes some constant depending only on $m$ and $p$. 
   The proof of Proposition \ref{mainprop} relies on a lower bound for   $\mathcal I^{\rm xt}_{m}(u)$ for  appropriate functions  $u$,  which immediately yields a lower bound for 
 $\Ext_{m, p}(u)$, in view of inequality \eqref{but}. The core  of the argument actually  deals with  the critical dimension  $m=\rmc$  with the choice of  the function $u=\mv^i_d.$  In several places, in particular when we increase dimensions,  we rely on the following lemma:
 
 \begin{lemma}
 \label{lefuneste}
  Let $f$ given an integrable non-negative  function  on  the cylinder
$\Cyl(R)$ for some $1\leq R \leq 2$. We have, 
\begin{equation}
\label{lefuneste}
\int_{\Cyl(R)} f (x) \rd  x \geq \frac 12\int_0^R  \left(\int_{\Lambda (r)} f (\sigma) \rd\sigma)\right ) \rd r.
\end{equation}
 \end{lemma}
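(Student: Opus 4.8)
The statement is a coarea-type lower bound for the integral of a non-negative function over the cylinder $\Cyl(R)=\B^{m-1}(R)\times[0,R/2]$ in terms of the integrals over the "inner boundary" surfaces $\Lambda^{m-1}(r)$. The natural approach is to foliate $\Cyl(R)$ by the family $\{\Lambda^{m-1}(r)\}_{0\le r\le R}$ and apply the coarea formula; the only care needed is to check that this foliation does cover $\Cyl(R)$ and that the Jacobian factor one picks up is bounded below by the constant $1/2$ claimed.

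First I would make the foliation explicit. The surface $\Lambda^{m-1}(r)=\bigl(\partial\B^{m-1}(r)\times[0,r/2]\bigr)\cup\bigl(\B^{m-1}(r)\times\{r/2\}\bigr)$ is the part of $\partial\Cyl^m(r)$ other than the bottom disk. As $r$ ranges over $[0,R]$ these surfaces are pairwise disjoint and their union is exactly $\Cyl(R)\setminus(\B^{m-1}(R)\times\{0\})$ — a point $(x,t)\in\Cyl(R)$ with $x\in\R^{m-1}$, $t\in(0,R/2]$ lies on $\Lambda^{m-1}(r)$ for $r=\max\{|x|,2t\}$, and this value is $\le R$. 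So define $\rho(x,t)=\max\{|x|,2t\}$, a Lipschitz function on $\Cyl(R)$ with $|\nabla\rho|\le 2$ a.e. (on the region $|x|>2t$ one has $\nabla\rho=x/|x|$, of norm $1$; on $|x|<2t$ one has $\nabla\rho=(0,2)$, of norm $2$). The level set $\{\rho=r\}$ is precisely $\Lambda^{m-1}(r)$.

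Next I would invoke the coarea formula for the Lipschitz function $\rho$:
\begin{equation*}
\int_{\Cyl(R)} f(y)\,|\nabla\rho(y)|\,\rd y=\int_0^R\left(\int_{\{\rho=r\}} f\,\rd\mathcal H^{m-1}\right)\rd r=\int_0^R\left(\int_{\Lambda^{m-1}(r)} f\,\rd\sigma\right)\rd r.
\end{equation*}
Since $f\ge 0$ and $|\nabla\rho|\le 2$ pointwise a.e., the left-hand side is at most $2\int_{\Cyl(R)} f\,\rd y$, and rearranging gives exactly
\begin{equation*}
\int_{\Cyl(R)} f(y)\,\rd y\ \ge\ \frac12\int_0^R\left(\int_{\Lambda^{m-1}(r)} f\,\rd\sigma\right)\rd r,
\end{equation*}
which is \eqref{lefuneste}. (The bottom disk $\B^{m-1}(R)\times\{0\}$ has $m$-dimensional measure zero, so it does not affect the left-hand integral.)

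The one genuine point to be careful about — the "main obstacle," such as it is — is the bound $|\nabla\rho|\le 2$ and, relatedly, making sure $\rho$ really is Lipschitz across the ridge $\{|x|=2t\}$ where the two pieces of the max meet; but a maximum of two Lipschitz functions is Lipschitz with constant the max of the two, so $\rho$ is Lipschitz with constant $2$, and the coarea formula applies verbatim. One could alternatively avoid measure-theoretic coarea entirely and argue by an elementary Fubini-type slicing: split $\Cyl(R)$ according to whether $|x|\ge 2t$ or $|x|<2t$, parametrise each piece by $(r,\cdot)$ with $r=|x|$ resp. $r=2t$, and compare the resulting Jacobians with the surface measure on $\Lambda^{m-1}(r)$; each slice of $\Lambda^{m-1}(r)$ is then counted with a weight between $\tfrac12$ and $1$, which again yields the factor $\tfrac12$. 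Either route is short; I would present the coarea version as the cleanest.
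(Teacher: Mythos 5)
Your argument is correct and matches the paper's proof, which likewise decomposes $\Cyl(R)$ into the surfaces $\Lambda(r)$, $r\in[0,R]$, and invokes Fubini/the coarea formula; you have simply made explicit the Lipschitz function $\rho(x,t)=\max\{|x|,2t\}$ and the gradient bound $|\nabla\rho|\le 2$ that produces the factor $\tfrac12$. No issues.
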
 
 \begin{proof}  Inequality \eqref{lefuneste} is  a consequence of the fact that the cylinder $\Cyl(R)$  may be decomposed as $\Cyl(R)=\underset{r\in [0, R]} \cup \Lambda(r)$ and of  Fubini's theorem (or  perhaps more precisely, the coarea formula).
 \end{proof}
\subsection{The critical dimension  $m=\rmc$}
 \begin{lemma}
 \label{bellaciao}
 We have, for some constant $\rc_0>0$ and any  number $d \in \Z$
 $$
  \mathcal I^{\rm xt}_{\rmc}(\mv^i_d) \geq \rc_0  \vert d \vert.
 $$ 
 \end{lemma}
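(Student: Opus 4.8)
The plan is to reduce the desired bound to the surface lower bound of Proposition~\ref{surfaces} together with the homotopy--energy estimate \eqref{troc} of Proposition~\ref{clefkey}, glued by the coarea-type inequality of Lemma~\ref{lefuneste}. Throughout I set $m=\rmc=\mpc+1$, so $m-1=\mpc$, and I fix $i\in\{1,\ldots,\ms\}$ and $d\in\Z$, with $d\neq 0$ (the case $d=0$ is trivial, since then $\upsigma_i^d$ is the neutral element and $\upnu_{\mpc}(\upsigma_i^d)=0$). The decisive structural point is that, in the critical dimension $m=\rmc$, the competitors entering the definition of $\mathcal I^{\rm xt}_{\rmc}(\mv^i_d)$ lie in $W^{1,\rmc}_{\rm loc}=W^{1,m}_{\rm loc}$, i.e.\ in the \emph{borderline class} $p=m$ for which Lemma~\ref{selecta} (conservation of the topological flux), and therefore Proposition~\ref{surfaces}, applies.

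First I would pick an \emph{arbitrary} competitor $U\in\mathfrak W_m(\mv^i_d)$, that is $U\in W^{1,m}_{\rm loc}(\Cyl^m(3/2),\mN)$ with trace $U(\cdot,0)=\mv^i_d$ on $\R^{m-1}$. By \eqref{gluts} the map $\mv^i_d$ is Lipschitz, equals $\mq_0$ off $\B^{m-1}$ and has homotopy class $\llbracket\mv^i_d\rrbracket=\upsigma_i^d$, so the pair $(\mv^i_d,U)$ fits the framework of Lemma~\ref{selecta} and Proposition~\ref{surfaces}; the latter is phrased with $V$ defined on all of $\mathcal D_m$, but its argument is local around each cylinder $\Cyl^m(r)$ with $1\leq r\leq 3/2$, and $\Cyl^m(r)\subset\Cyl^m(3/2)$, so it applies verbatim to $U$. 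Applying Proposition~\ref{surfaces} with $p=m$ (hence $p\geq m$) and exponent $s=\mpc=m-1$ (so $p\geq s\geq m-1$), and using the notation \eqref{matou}, I obtain for a.e.\ $r\in[1,3/2]$
\begin{equation*}
\int_{\Lambda^{m-1}(r)}|\nabla U|^{\mpc}\,\rd\sigma \ \geq\ C_{\mpc}\,\upnu_{\mpc}(\upsigma_i^d) \ \geq\ C_{\mpc}\,\rC_2\,|d|,
\end{equation*}
where the last inequality is the lower bound \eqref{troc} of Proposition~\ref{clefkey}.

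Finally I would integrate this slice estimate in $r$: by Lemma~\ref{lefuneste} applied with $R=3/2$ and $f=|\nabla U|^{\mpc}$, and discarding the (nonnegative) contribution of $r\in[0,1]$,
\begin{equation*}
\rE_{\mpc}\!\left(U,\Cyl^m(3/2)\right) \ \geq\ \frac{1}{2}\int_1^{3/2}\left(\int_{\Lambda^{m-1}(r)}|\nabla U|^{\mpc}\,\rd\sigma\right)\rd r \ \geq\ \frac{C_{\mpc}\rC_2}{4}\,|d|,
\end{equation*}
and taking the infimum over all $U\in\mathfrak W_m(\mv^i_d)$ yields $\mathcal I^{\rm xt}_{\rmc}(\mv^i_d)\geq\rc_0\,|d|$ with $\rc_0=C_{\mpc}\rC_2/4>0$, a constant depending only on $\mN$. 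The proof is short because all the substance is carried by the earlier results; the one point requiring genuine care --- and exactly the place where the choice of exponents $(\mpc,\rmc)$ in \eqref{cradoc}--\eqref{define} is indispensable --- is verifying the hypotheses of Proposition~\ref{surfaces}: it is the $W^{1,\rmc}=W^{1,m}$ integrability of the competitors $U$ (rather than merely $W^{1,\mpc}$) that makes $\llbracket U|_{\Lambda^{m-1}(r)}\rrbracket$ well defined and equal to $-\llbracket\mv^i_d\rrbracket$ for a.e.\ $r$ via Lemma~\ref{selecta}, thereby ruling out the escape of the topological flux through a singularity and forcing the surface energy on each slice $\Lambda^{m-1}(r)$ to remain $\geq\upnu_{\mpc}(\upsigma_i^d)$.
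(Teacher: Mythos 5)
Your argument is correct and is essentially the paper's own proof: take an arbitrary competitor in $\mathfrak W_{\rmc}(\mv^i_d)$, apply Proposition \ref{surfaces} with $p=\rmc$, $s=\mpc$ together with the lower bound \eqref{troc} on $\upnu_{\mpc}(\upsigma_i^d)$ to get the slice estimate, and integrate over $r\in[1,3/2]$ via Lemma \ref{lefuneste} before taking the infimum. Your added remarks (the trivial case $d=0$, the localization of Proposition \ref{surfaces} to $\Cyl^{\rmc}(3/2)$, and citing \eqref{troc} rather than \eqref{gluts} for the homotopy lower bound) only make the same argument more precise.
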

\begin{proof}  We first notice that, since the function $v=\mv^i_d$ is Lipschitz, it belongs to the space $ \TrLq(\mN)$, for any $p\geq 1$.     Consider next an arbitrary map $V_d\in W_{\rm loc}^{1,\rmc} (\mathcal D_{\rmc}, \mN)$ such that 
$V_d(x, 0)=\mv^i_d(x)$ for $x \in  \R^{\rmc-1} $.     We  are  in position to apply   Proposition \ref{surfaces}   in dimension $m=\rmc$ to the functions $v=\mv^i_d$ and  $V_d$ with $p=\rmc$ and $s=\mpc=\rmc-1$.   It follows,  in view of \eqref{micromou} and the lower bound provided by \eqref{gluts},  that  for every $1\leq r  \leq  2 $  we have 
 \begin{equation}
\label{riritou}
\int_{\Lambda^{\rmc-1}(r)} \vert \nabla   V_d \vert ^{\rmc-1}\geq  C_{\rmc} \vert d \vert. 
\end{equation}
We apply the inequality  \eqref{lefuneste} to the function $f= \vert \nabla  V_d \vert^{\rmc-1}.$  This yields 
\begin{equation}
\label{lefuneste1}
\begin{aligned}
\int_{\Cyl(3\slash 2)}  \vert \nabla  V_d \vert^{\rmc-1} \rd  x &\geq 
\frac 12\int_{1}^{3\slash 2}  \left(\int_{\Lambda^{\rmc-1} (r)}  \vert \nabla  V_d \vert^{\rmc-1})\right ) \rd r \\
& \geq \frac 1 4 C_{\rmc} \vert d \vert, 
\end{aligned}
\end{equation}
 where,  for the inequality on the second line, we have invoked \eqref{riritou}.  On the other hand, we have, in view  of the definition of  $\mathcal I^{\rm xt}_{\rmc}(\mv^i_d)$
 $$
  \mathcal I^{\rm xt}_{\rmc}(\mv^i_d) =\inf\left \{ \int_{\Cyl(3\slash 2)}  \vert \nabla  V_d \vert^{\rmc-1} \rd  x,  
  V_d\in W_{\rm loc}^{1,\rmc} (\mathcal D_{\rmc}, \mN), {\rm s.t  \ } V_d(x, 0)=\mv^i_d(x)
  \right\},
$$
   so that the conclusion follows from \eqref{lefuneste1}.
\end{proof}
\subsection{Adding  dimensions}
\label{additude}
 Given an integer $\mm \in \N^*$, our first task will be  to construct\footnote{A similar construction is used in \cite{Be14}.} a mapping  
 $$
 \mathfrak I^{\mm}:  {{\rm T}_{{\rm race}, \mathfrak q_0}^{\mm, p}} (\R^\ell) \to \,  {{\rm T}_{{\rm race}, \mathfrak q_0}^{\mm+1,  p}} (\R^\ell), 
 $$
 which, to each    map $u: \R^{\mm-1}  \to  \R^\ell$  such that $u$ is constant equal to some value $\mq_0$ outside  the unit   ball $\B^{\mm-1}$, relates  a map 
 $ \mathfrak I^{\mm}(u): \R^{\mm} \to  \R^\ell $, constant equal to $\mq_0$ outside  the unit   ball $\B^{\mm}$. This map  is obtained by means of  a combination of several elementary geometric constructions, in particular a cylindrical rotation.  First, we consider the translated map $\tilde u$ defined on $\R^{\mm-1}$ by
 $$ \tilde u (x)=u(x-A^{\mm-1}) {\rm  \ where \ } A^{\mm-1} {\rm  \  denotes \ the \ point \ } \  A{\mm-1}=(2, 0,\ldots, 0) \in \R^{\mm-1}, 
 $$
  so that $\tilde u$ is equal to ${\mq}_0$  outside the ball $\B_1^{\mm-1}(A^{\mm-1})\subset \B_3^{\mm-1} (0)$.  We then  introduce the map 
  $T^{\mm}(u)$ defined  for   $(x_1, x_2, \ldots, x_{\mm-1}, x_{\mm})\in \R^{\mm}$ by 
  \begin{equation*}
T^{m} (u)(x_1, x_2,\ldots,  x_{\mm-1}, x_{\mm})= \tilde u(\mathfrak r (x_1, x_{2}), x_3, \ldots,x_{\mm-1},  x_{\mm}),  
\end{equation*}
where we have set $\mathfrak r(x_1, x_{2})=\sqrt{x_1^2+x_{2}^2}$.  It follows by construction that    the map $T^{\mm} (u)$ possesses cylindrical symmetry  around the  
$(\mm-2)$-dimensional hypersurface $x_1=x_2=0$. Moreover,  is equal to ${\mq}_0$  outside  the  ball $\B_3^\mm$ of radius $3$ and center the origin  and actually also on  the cylinder 
$\displaystyle{{[-\frac 12, \frac 12]}^2 \times \R^{\mm-2}}.$
 Since we wish the map $\mathfrak I^{\mm}(u)$ to be constant outside the unit ball  $\B_1^{\mm}$,   we need normalize  the previous map  and set 
\begin{equation}
\mathfrak I^{\mm}(u)(x)=T^{\mm}(u) (3x), {\rm \ for \ } x \in \R^\mm.
\end{equation}
It follows from the above observations that, as desired, the map  $\mathfrak I^{m}(u)$ equals ${\mq_0}$ outside  $\B_1^\mm$ and also on   the cylinder 
$$\mathcal Q^{\mm} \equiv { [-\frac 16, \frac 16]}^{2} \times \R^{\mm-2}.$$ 
The reader may easily prove the following:
\begin{lemma} 
\label{lip}
 The map $\mathfrak I^{\mm}$  is affine and  continuous 
 from $ {{\rm T}_{{\rm race}, \mathfrak q_0}^{\mm, p}} (\R^\ell)$  to $  {{\rm T}_{{\rm race}, \mathfrak q_0}^{\mm+1,  p}} (\R^\ell)$. If $u$ is a Lipschitz map in ${{\rm T}_{{\rm race}, \mathfrak q_0}^{\mm, p}} (\R^\ell)$, then $\mathfrak I^{\mm}(u)$ is also Lipschitz with
 $$ \Vert \nabla \mathfrak I^{\mm}(u)\Vert_{L^\infty (\R^{m})} \leq C_m  \Vert \nabla u \Vert_{L^\infty (\R^{m-1})}, 
 $$
 where $C_m>0$ denotes a constant depending only on $m$.
\end{lemma}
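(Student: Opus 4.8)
The plan is to write $\mathfrak I^{\mm}$ as the composition of three elementary operations and to control each one separately. Recall that $\mathfrak I^{\mm}(u)(x)=T^{\mm}(u)(3x)$, where $T^{\mm}(u)=\tilde u\circ\pi$, $\tilde u=u(\cdot-A^{\mm-1})$, and $\pi(x_1,\dots,x_{\mm})=(\mathfrak r(x_1,x_2),x_3,\dots,x_{\mm})$ with $\mathfrak r(x_1,x_2)=\sqrt{x_1^2+x_2^2}$. Each of the maps $u\mapsto\tilde u$, $\tilde u\mapsto T^{\mm}(u)$ and $w\mapsto w(3\,\cdot)$ is the restriction of a linear operator that sends the constant map $\equiv\mq_0$ to itself, so $\mathfrak I^{\mm}$ is automatically an affine map between the affine spaces ${\rm T}_{{\rm race}, \mathfrak q_0}^{\mm, p}(\R^\ell)$ and ${\rm T}_{{\rm race}, \mathfrak q_0}^{\mm+1, p}(\R^\ell)$; the content of the lemma is thus the two quantitative bounds. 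The translation is an isometry for every norm involved, and the dilation $w\mapsto w(3\,\cdot)$ multiplies $\Vert\nabla(\cdot)\Vert_{L^\infty}$ by $3$ and the $L^p$ and Gagliardo norms by fixed positive powers of $3$, while turning a map constant off $\B_3^{\mm}$ (resp.\ off $[-\tfrac12,\tfrac12]^2\times\R^{\mm-2}$) into one constant off $\B_1^{\mm}$ (resp.\ off $[-\tfrac16,\tfrac16]^2\times\R^{\mm-2}$); so everything reduces to the substitution $T^{\mm}$.

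For the Lipschitz estimate I would only use that $\pi$ is $1$-Lipschitz. Indeed $\big\vert\,\vert a\vert-\vert b\vert\,\big\vert\le\vert a-b\vert$ for $a,b\in\R^2$ shows $\mathfrak r$, hence $\pi$, is $1$-Lipschitz, so if $u$ is Lipschitz then $\tilde u$ has the same Lipschitz constant and $T^{\mm}(u)=\tilde u\circ\pi$ satisfies $\Vert\nabla T^{\mm}(u)\Vert_{L^\infty(\R^{\mm})}\le\Vert\nabla\tilde u\Vert_{L^\infty(\R^{\mm-1})}=\Vert\nabla u\Vert_{L^\infty(\R^{\mm-1})}$. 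Composing with the dilation by $3$ gives $\Vert\nabla\mathfrak I^{\mm}(u)\Vert_{L^\infty(\R^{\mm})}\le 3\,\Vert\nabla u\Vert_{L^\infty(\R^{\mm-1})}$, which is the asserted inequality with $C_m=3$.

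For the continuity on the trace space the natural route is to lift the substitution to an extension. The function $\tilde u-\mq_0$ vanishes off $\B_1^{\mm-1}(A^{\mm-1})$, hence wherever $y_1\notin(1,3)$; using the extension result of \eqref{extension} (after a dilation bringing this support into a fixed ball, and undoing it afterwards) choose $V\in W^{1,p}(\R^{\mm-1}\times\R_+,\R^\ell)$ with trace $\tilde u-\mq_0$ on $\{t=0\}$, supported where $y_1\in(\tfrac12,\tfrac72)$, and with $\Vert V\Vert_{L^p}+\Vert\nabla V\Vert_{L^p}\le C\,\Vert\tilde u-\mq_0\Vert_{1-1/p,p}$; set $U=V+\mq_0$ and $\tilde U(x_1,\dots,x_{\mm},t)=U(\pi(x_1,\dots,x_{\mm}),t)$ on $\R^{\mm}\times\R_+$, so that the trace of $\tilde U$ on $\{t=0\}$ is $T^{\mm}(u)$. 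A pointwise computation gives $\vert\nabla\tilde U(x,t)\vert=\vert\nabla U(\pi(x),t)\vert$ (the angular derivative vanishes, the radial one is $\partial_{y_1}U$, the rest are unchanged), whence, writing the $(x_1,x_2)$-integral in polar coordinates and using that $\rho=y_1\le\tfrac72$ on the support of $\nabla V$,
\[
\int_{\R^{\mm}\times\R_+}\vert\nabla\tilde U\vert^p=2\pi\int_{\R^{\mm-2}\times\R_+}\left(\int_0^\infty\vert\nabla U\vert^p\,\rho\,d\rho\right)\le 7\pi\int_{\R^{\mm-1}\times\R_+}\vert\nabla U\vert^p,
\]
and likewise $\Vert\tilde U-\mq_0\Vert_{L^p}^p\le 7\pi\,\Vert U-\mq_0\Vert_{L^p}^p$ (this last bound can also be read off directly from $\tilde u$). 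Hence $\tilde U\in W^{1,p}(\R^{\mm}\times\R_+,\R^\ell)$, so its trace $T^{\mm}(u)$ belongs to $W^{1-1/p,p}(\R^{\mm},\R^\ell)$ with $\Vert T^{\mm}(u)-\mq_0\Vert_{1-1/p,p}\le C_{\mm}\,\Vert\tilde u-\mq_0\Vert_{1-1/p,p}$; taking the infimum over admissible $V$ and composing with the (isometric) translation and the dilation yields the continuity of $\mathfrak I^{\mm}$.

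The only point that needs a little care is the radial singularity of $\pi$ along the axis $\{x_1=x_2=0\}$. It is harmless precisely because $\tilde u$, and the extension $U$, are constant near that axis — which is the whole purpose of the preliminary translation by $A^{\mm-1}$, pushing the non-constant part of the data into $\{y_1\ge1\}$ — so that the polar weight $\rho$ stays comparable to $1$ on the region where $\nabla\tilde U\ne0$ and the change of variables costs only a universal constant. Everything else is routine bookkeeping with norms and dilations.
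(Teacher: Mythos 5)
The paper gives no proof of this lemma (it is explicitly left to the reader), so there is no argument to compare with; your proof is correct and is evidently the intended one. The decomposition into translation, cylindrical substitution and dilation, the Lipschitz bound via the $1$-Lipschitz character of $\mathfrak r$ (giving the stated estimate with an explicit constant), and the trace-norm bound obtained by lifting to a $W^{1,p}$ extension of $\tilde u-\mq_0$ and computing in polar coordinates are all sound, and your closing remark correctly identifies why the axis $\{x_1=x_2=0\}$ causes no trouble. One implementation point deserves tightening: the paper's operator $\Text$, after the translation by $A^{\mm-1}$, only produces an extension supported in $\B^{\mm-1}(A^{\mm-1},2)\times[0,1]$, whose closure touches the axis, and a pure dilation does not shrink this margin (the support radius of the extension scales with that of the data). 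The clean fix is a cutoff: multiply the extension by $\chi(y,t)$ equal to $1$ for $\vert y-A^{\mm-1}\vert^2+t^2\leq (5/4)^2$ and vanishing for $\vert y-A^{\mm-1}\vert^2+t^2\geq (3/2)^2$; the trace is unchanged since $\tilde u-\mq_0$ is supported where $\chi=1$, the $W^{1,p}$ norm is only multiplied by a constant (the extension controls the $L^p$ norm as well), and the support now lies in $\{y_1\geq 1/2\}$, which is exactly what guarantees that $\tilde U$ is locally constant near the axis and hence weakly differentiable across that codimension-two set without any capacity argument. Finally, for continuity (not just boundedness at $\mq_0$) note that your estimate applies verbatim to $u-v$ for two elements of ${\rm T}_{{\rm race}, \mathfrak q_0}^{\mm, p}(\R^\ell)$, since the linear part of $\mathfrak I^{\mm}$ acts on such differences, which vanish off $\B^{\mm-1}$; this gives the Lipschitz continuity of the affine map and completes the argument.
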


  We next specify somewhat the discussion to $\mN$-valued maps. We have:
  
  \begin{proposition}
  \label{rotextension} Assume that $m\geq \rmc$ and that    $u \in \TrLqmpc(\mN)$.  Then we have, for some constant $C_\mm>0$ depending only on $\mm$
  \begin{equation}
  \label{rotor}
 \mathcal I^{\rm xt}_{\mm+1}\left( \mathfrak I^{\mm}\left(u\right) \right)  \geq  C_\mm\mathcal I^{\rm xt}_{\mm} (u).
  \end{equation}
  \end{proposition}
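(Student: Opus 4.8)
The plan is to relate an arbitrary competitor for the left-hand side, i.e.\ a map $W \in \mathfrak W_{\mm+1}(\mathfrak I^\mm(u))$ defined on $\Cyl^{\mm+1}(3/2)$ with $W(\cdot,0)=\mathfrak I^\mm(u)$, to a family of competitors for the right-hand side quantity $\mathcal I^{\rm xt}_\mm(u)$, obtained by slicing. The key point is that $\mathfrak I^\mm(u)$ was built precisely so that it has cylindrical symmetry in the $(x_1,x_2)$-variables around the codimension-$2$ subspace $\{x_1=x_2=0\}$, with $u$ recovered on each half-plane emanating from that axis after the translation by $A^{\mm-1}$ and the rescaling by $3$. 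So I would first fix an angle $\theta \in [0,2\pi)$, let $H_\theta \subset \R^{\mm+1}$ be the half-hyperplane $\{(t\cos\theta, t\sin\theta, x_3,\dots,x_{\mm+1})\times[0,\tfrac34]\, :\, t\geq 0\}$, and consider the restriction $W_\theta := W|_{H_\theta}$. By Fubini applied to $|\nabla W|^{\mpc}$ in cylindrical coordinates (a polar decomposition in $(x_1,x_2)$), there is a positive-measure set of angles $\theta$ for which $W_\theta \in W^{1,\rmc}_{\rm loc}$ of the slice, the trace condition $W_\theta(\cdot,0)=u(\cdot)$ (up to the fixed affine change of coordinates built into $\mathfrak I^\mm$) holds in the sense of traces, and
\[
\int_{H_\theta \cap \Cyl(3/2)} |\nabla W_\theta|^{\mpc}\, \rd\sigma \ \leq\ C_\mm \int_{\Cyl^{\mm+1}(3/2)} |\nabla W|^{\mpc}\, \rd x .
\]

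Second, I would check that such a slice $W_\theta$, after undoing the affine normalization (translation by $A^{\mm-1}$, dilation by $3$) — which only changes energy by a dimensional constant and maps $\Cyl^{\mm+1}(3/2)$-slices into something containing a copy of $\Cyl^\mm(3/2)$, using that $\mathfrak I^\mm(u)$ is supported in $\B_1^\mm$ well inside the relevant ball — is an admissible competitor in the definition \eqref{cradoc} of $\mathcal I^{\rm xt}_\mm(u)$, i.e.\ it lies in $\mathfrak W_\mm(u)$. Hence its $\mpc$-energy over $\Cyl^\mm(3/2)$ is at least $\mathcal I^{\rm xt}_\mm(u)$. Combining with the slicing inequality above and then taking the infimum over all $W \in \mathfrak W_{\mm+1}(\mathfrak I^\mm(u))$ yields \eqref{rotor}.

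The main technical obstacle, and the step I would spend the most care on, is the trace bookkeeping: one must ensure that for almost every angle $\theta$ the restriction $W_\theta$ genuinely has the prescribed trace on $\{x_{\mm+1}=0\}$ equal to $u$ (pulled back through the cylindrical-rotation and rescaling maps defining $\mathfrak I^\mm$), rather than merely having finite energy. This is a standard but delicate Fubini-type argument for Sobolev traces along slices, complicated here by the fact that the slicing is in polar rather than Cartesian coordinates, so that the radial variable $\mathfrak r(x_1,x_2)=\sqrt{x_1^2+x_2^2}$ degenerates at the axis; one handles this by noting that $\mathfrak I^\mm(u)$ is constant ($\equiv \mq_0$) near that axis (indeed on $\mathcal Q^\mm$), so the degeneracy region carries no topology and no trace constraint. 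A secondary point is to verify that the domain $H_\theta\cap\Cyl^{\mm+1}(3/2)$, after the affine change of variables, indeed contains a set bilipschitz-equivalent (with $\mm$-dependent constants) to $\Cyl^\mm(3/2)$ with matching boundary identifications, so that the restricted map is a legitimate element of $\mathfrak W_\mm(u)$; this is where the explicit radii $3/2$, $3$, $1/6$ in the construction of $\mathfrak I^\mm$ are used.
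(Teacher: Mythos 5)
Your proposal is correct and follows essentially the same route as the paper: Fubini in cylindrical coordinates around the axis $\{x_1=x_2=0\}$, with each angular slice (restricted to the region at distance at least $1/6$ from the axis, where the weight $|x_\theta|$ is bounded below and where $\mathfrak I^{\mm}(u)$ pulls back to $u$ after the translation and dilation by $3$) flattened and rescaled into a competitor in $\mathfrak W_\mm(u)$. The only cosmetic difference is that you select one good angle by averaging, whereas the paper integrates the per-slice lower bound $\int_{\mathcal B_\theta^{\mm-1,+}(1/2)\times[0,1/4]}|\nabla V|^{\rmc-1}\geq C\,\mathcal I^{\rm xt}_{\mm}(u)$ over all $\theta$; the two are equivalent.
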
 
  \begin{proof}    The proof of \eqref{rotor} is actually mainly  a consequence of Fubini's theorem. In order to see this,  we introduce first some notation. 
 For $\theta \in \R$, we consider the  vector $\vec e_\theta= (\cos \theta, \sin \theta, 0, \ldots, 0)=\cos \theta\vec  e_1+ \sin \theta  \vec  e_{2}$ of $\R^{\mm}$ and set   $x_\theta= x. \vec e_\theta$, for $x \in \R^{\mm}$.    We introduce the  $(\mm-1)$-dimensional hyperplane $\mathcal P_\theta^{\mm-1}$ of $\R^{\mm}$ defined by
 $$
 \mathcal P_\theta^{\mm-1}\equiv {\rm Vect} \left \{\vec e_\theta, \, \vec e_3, \ldots,\,  \vec e_{\mm} \right\}
 $$
  and  the half-hyperplane $\mathcal P_\theta^{\mm-1, +}$ defined by
 \begin{equation}
 \label{peplum}
 \mathcal P_\theta^{\mm-1, +}=\{ x \in \mathcal P_\theta^{\mm-1},\  x_\theta\equiv x. \vec e_\theta \geq 0\}.
 \end{equation}
 We also consider  the  ball inside $\mathcal P_\theta^{\mm-1, +}$  centered at the point $\tilde A_{\theta}^{\mm-1,+}=\frac 2 3 (\cos \theta, \sin \theta, 0,  \ldots, 0)$  and of radius $R>0$ defined by 
$$
 \mathcal B_\theta^{\mm-1, +}(R)=\{x  \in  \mathcal P_\theta^{\mm-1, +},  0\leq  (x_\theta-\frac 2 3)^2+ x_3^2+ \ldots x_{\mm+1}^2 \leq R^2 \}. 
$$
 As above, if we consider a non-negative function $f$ defined on the domain  $\R^{\mm} \times [0,2]$ we have thanks to Fubini's Theorem  and for any $\displaystyle{0<R < \frac 2 3}$
 \begin{equation}
 \label{fou}
 \begin{aligned}
 \int_{\R^{\mm} \times [0,2]}  f(x)\ \rd r &=
 \int_0^{2\pi} \left(  \int_ {\mathcal P_\theta^{\mm-1, +}\times [0,2] } \vert x_\theta \vert f(x) \rd x \right) \rd \theta
\geq  \int_0^{2\pi} \left(  \int_ {\mathcal B_\theta^{\mm-1, +}(R)\times [0,2]} \vert x_\theta \vert f(x) \rd x \right) \rd \theta \\
& \geq  (\frac 2 3-R) 
   \int_0^{2\pi} \left(  \int_ {\mathcal B_\theta^{\mm-1, +}(R)\times [0,2] } f(x) \rd x \right) \rd \theta. \\
\end{aligned}
 \end{equation}
 Consider next an arbitray map $V \in \mathfrak W_{\mm+1} (\mathfrak  I^\mm (u))$, so that $V$ is defined on the $(\mm+1)$-dimensional cylinder $\Cyl^{\mm+1}(3 \slash 2)$ and  satisfies 
 \begin{equation}
 \begin{aligned}
  V(x_1, x_2, \ldots, x_{\mm}, 0)&= \mathfrak  I^\mm u (x_1, x_2, \ldots, x_{\mm})   
  {\rm \ for \  } x_1^2+ x_2^2+ x_{\mm}^2 \leq 1, \ x_1\geq 0 \\
  &=\tilde u (3 \mathfrak r (x_1, x_2),3 x_3, \ldots,3 x_{\mm}).
 \end{aligned}
 \end{equation}
    We apply  the identity \eqref{fou}  to the map 
 $$f={\bf 1}_{\Cyl^{\mm+1}(3 \slash 2)} \vert \nabla V \vert^{\rmc-1}  {\rm \ with \ radius \ } R=\frac 12.$$
This yields 
\begin{equation}
\label{drone}
\int_{\Cyl^{\mm+1}(3 \slash 2)} \vert \nabla V \vert^{\rmc-1} \geq   \frac {1}{6}   \int_0^{2\pi} \left(  \int_ {\mathcal B_\theta^{\mm-1, +}(1\slash 2)  \times [0,2] } \vert \nabla V \vert^{\rmc-1} \rd x \right) \rd \theta. \\
\end{equation}
  We claim that for any $\theta \in [0, 2 \pi]$, we have 
  \begin{equation}
  \label{claimitude}
  \int_ {\mathcal B_\theta^{\mm-1, +}(1\slash 2)  \times [0,1\slash 4] } \vert \nabla V \vert^{\rmc-1} \rd x  \geq C
  \mathcal I^{\rm xt}_{\mm} (u).
  \end{equation}
  \noindent
  {\it Proof of the claim \eqref{claimitude}}. 
Given an arbitray  map $v_\theta$ defined on  $\mathcal P_\theta^{\mm-1, +}\times [0,2] $, we define a map   $\mathfrak D^{\mm}_\theta(v_\theta)$ on the set $\R^{\mm-1} \times[0,2]$ setting  for $(x_1,  x_3,  \ldots, x_{\mm-1}, x_{\mm}, x_{\mm+1}) \in \R^{\mm-1} \times [0,2]$
\begin{equation}
\label{duxbellorum}
\mathfrak D^{\mm}_\theta(v_\theta)(x_1, x_3,  \ldots, x_{\mm-1}, x_{\mm}, x_{\mm+1}) =v_\theta(x_1\cos \theta , x_1 \sin\theta,  x_3, \ldots, x_{\mm-1}, x_{\mm}, x_{\mm+1}).
\end{equation}
It follows from this definition that the  energy $\rE_p$ is conserved in the sense that, for any $p\geq 1$ 
\begin{equation*}
\label{toute}
\int_{\B^{\mm-1}(\tilde A_0, 1\slash 2)\times [0,\frac 1 4]}  \vert \nabla \mathfrak D^{\mm}_\theta(v_\theta)\vert^p = \int_{\mathcal B_\theta^{\mm-1, +}(1\slash 2)  \times [0,1\slash 4] } \vert \nabla v_\theta\vert^p, {\rm \ where \ } \tilde A_0^{\mm-1}=(\frac 2 3, 0, \ldots, 0)\in \R^{\mm-1}. 
\end{equation*}
 We apply this construction to the restriction $V_\theta$ of the map $V$ to $\mathcal P_\theta^{\mm-1, +}\times [0,2]$.
  Since   the restriction of the  map $V$  on $\R^{\mm}\times \{0\}$ is equal to $\mathfrak I^\mm(u)$, we deduce  that 
\begin{equation}
\label{vicino}
\mathfrak D^{\mm}_\theta V_\theta (x',  0)=w(x')=u(3x'-A{m_1}) {\rm \  for \ any \ }  x'=(x_1, x_3, \ldots, x_{\mm} )     \in \R^{\mm-1}.
\end{equation}
We define  next  the map $\upzeta_\theta$ on  $ \Cyl^{\mm}(3 \slash 2)$ setting 
$$
\upzeta_\theta (x', s)= \mathfrak D^{\mm}_\theta V\left((\frac{x'}{3} +\tilde A_\theta), \frac s  3 \right)
{\rm \ for \ }   x'=(x_1, x_3, \ldots, x_{\mm} )     \in \R^{\mm-1}  {\rm   \	and  \ } s \geq 0.
$$
It follows from  \eqref{vicino} that 
$$
\upzeta_\theta (x', 0)=u(x')       {\rm \  for \ any \ }  x'=(x_1, x_3, \ldots, x_{\mm} )     \in \R^{\mm-1},      
$$
so that   the map $\upzeta_\theta$  belongs to  $\mathfrak W_{\mm} (u)$ and hence we have the inequality
\begin{equation}
\label{mya}
\int_{\Cyl^{\mm}(3 \slash 2)} \vert \nabla \zeta_\theta \vert ^{\rmc-1} \geq \mathcal  I_\mm^{\rm xt}(u). 
\end{equation}
On the other hand, we have 
\begin{equation}
\label{totoro}
\int_{\Cyl^{\mm}(3 \slash 2)} \vert \nabla \zeta_\theta \vert ^{\rmc-1}=\frac {1} {3^{\mm-\rmc+ 1}}
\int_{\B^{\mm-1}(\tilde A_0, 1\slash 2)\times [0,\frac 1 4]}  \vert \nabla \mathfrak D^{\mm}_\theta V \vert ^{\rmc-1}.
\end{equation}
Combining \eqref{toute}, \eqref{totoro} and \eqref{mya}  we complete the proof of the claim \eqref{claimitude}.

\smallskip
Going back to \eqref{drone}, we obtain, combining with \eqref{claimitude}
$$
\int_{\Cyl^{\mm+1}(3 \slash 2)} \vert \nabla V \vert^{\rmc-1} \geq   C \mathcal  I_\mm^{\rm xt}(u).
$$
Since this lower bound is true for any map $V$ in  $\mathfrak W_{\mm+1} (\mathfrak  I^\mm (u))$, it holds also for the infimum on that set yielding the desired conclusion \eqref{rotor}.  
\end{proof}
\subsection{Proof of Propostion \ref{pirate} completed}
  Recall that we have already defined the map $ \mathfrak U_{\mpc}^k$ in the critical dimension $m=\rmc$ by formula \eqref{prems}. We define the maps  $ \mathfrak U_m^k$ inductively on the dimension $m$  setting
\begin{equation}
\label{pour}
 \mathfrak U_{m+1}^k=\mathfrak I^m (\mathfrak U_m^k){ \rm \ for \ any \ } k \in \Z. 
\end{equation}
Combining the result of Lemma \ref{lip} with the properties of the map $\mathfrak v _d^i$  with $d=k^\mpc$ given in Proposition \ref{pirate}
 we obtain 
 \begin{equation}
 \label{ricca}
 \Vert \nabla \mathfrak U_{m}^k \Vert_{L^\infty(\R^{m-1})} \leq C_m  \Vert \nabla \mathfrak v _d^i \Vert_{L^\infty(\R^{m-1})}  \leq C_m k, 
 \end{equation}
  whereas Proposition \ref{rotextension} yields 
 \begin{equation}
 \label{labs}
 \mathcal I^{\rm xt}_{m}\left(\mathfrak U_{m}^k  \right)  \geq  C_m \mathcal I^{\rm xt}_{\mm} ( \mathfrak v _d^i ).
 \end{equation}
    The first inequality in \eqref{sept} is a direct consequence of \eqref{ricca}. For the second, we obtain,  combining  
   \eqref{labs} with the result of Lemma \ref{bellaciao} 
   $$
    \mathcal I^{\rm xt}_{m}\left(\mathfrak U_{m}^k  \right)  \geq  C_m \vert d \vert \geq  C_m  k^\mpc,
$$
 which yields the second inequality in \eqref{sept} and hence completes the proof. 
\section{Proof of Proposition \ref{mainprop}}
\label{sparrow}
In this section we will provide the proofs to Lemma \ref{thrace}  and  Lemma \ref{lemmitude} and then complete the proof 
of proposition \ref{mainprop}.

\subsection{On the trace norm of glued maps}
\label{radinitude}
 Whereas the energy norm  $W^{1,p}$ has a local nature,  the \emph{trace norm does not}. This introduces some interaction terms when computing the trace norm of glued maps. In order to estimate this interaction terms, we are led to consider the general situation where we are given a family of points $\{A_i\}_{i\in I}$ in $\R^{m-1}$,   a family of  radii $\{r_i\}_{i \in I}$  and maps in the  the subspace   $\mathfrak X_{m,p}(\{r_i, A_i\}_{i \in I})$  of $\in \Wtrp(\R^{m-1}, \R^\ell) $  defined by
 $$
 \mathfrak X_{m,p}(\{r_i, A_i\})
 =\left\{ u\in \Wtrp(\R^{m-1}, \R^\ell) {\rm \ such \ that \ } u=0 \ {\rm \  on \ }
 \R^{m-1} \setminus \underset{i\in I} \cup \B^{m-1}(r_i, A_i)
 \right\}.
 $$
 We assume furthermore that   the balls $\B^{m_1} (r_i, A_i)$ are well separated, that is we assume
 \begin{equation}
 \label{loin}
   \vert A_i-A_j\vert \geq 8 (r_i+r_j)  {\rm  \ for \ } i \not =j  {\rm \ in  \ } J.
  \end{equation}
Given a map  $u \in \Wtrp(\R^{m-1}, \R^\ell)$   we also introduce the  "localized  trace energy"
 $$
 \rN_{r,a} (u) =\int_{\B^{m-1}(2r, A)}\left( \int_{\B^{m-1}(2r, A)}  \frac {\vert u(x)-u(y) \vert^p}{\vert x-y \vert^{p+m-2 }} {\rm d}x \right){\rm d}y  {\rm  \  \ for \ }  a \in \R^{m-1} {\rm \ and \ } r>0, 
 $$
 When $u \in  \mathfrak X_{m,p}(\{r_i, A_i\}$ we will use the notation $\rN_{i} (u)= \rN_{r_i,A_i} (u)$.   The next result relates  the trace norm  to  the localized  trace energies.
 
 \begin{lemma}
 \label{trouville}
 Assume that \eqref{loin} holds and that $u \in \mathfrak X_{m,p}(\{r_i, A_i\})\cap L^\infty (\R^{m-1})$. Then we have, for some constant $C_m>0$ depending only on $m$
 \begin{equation}
 \label{localized}
  \vert u  \vert_{1-1\slash p, p}^p  \leq \underset {i \in I}  \sum \,   \rN_{i} (u)+  C_m \Vert u \Vert_\infty^p \underset {i \in I}  \sum  r_i^{m-p}.
 \end{equation} 
 \end{lemma}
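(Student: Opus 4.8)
The plan is to split the Gagliardo double integral
\[
\lvert u\rvert_{1-1\slash p,p}^p=\int_{\R^{m-1}}\int_{\R^{m-1}}\frac{\vert u(x)-u(y)\vert^p}{\vert x-y\vert^{p+m-2}}\,\rd x\,\rd y
\]
according to the position of $x$ and $y$ relative to the balls $\B_i\equiv\B^{m-1}(r_i,A_i)$ and the doubled balls $\widehat{\B}_i\equiv\B^{m-1}(2r_i,A_i)$, matching each resulting piece either with one of the quantities $\rN_i(u)$ or with a crude $L^\infty$ tail estimate. Set $\Omega=\bigcup_{i\in I}\B_i$. The first point is that, by the separation hypothesis \eqref{loin}, the doubled balls $\widehat{\B}_i$ are still pairwise disjoint and, more precisely, $\widehat{\B}_i\cap\B_j=\emptyset$ for $j\neq i$; consequently $\Omega\cap\widehat{\B}_i=\B_i$, while $\Omega^c\cap\widehat{\B}_i=\widehat{\B}_i\setminus\B_i$ is a region on which $u$ vanishes. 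Since $u\equiv 0$ on $\Omega^c$, the integrand is supported on $\{x\in\Omega\}\cup\{y\in\Omega\}$, so
\[
\lvert u\rvert_{1-1\slash p,p}^p=\int_{\Omega\times\Omega}+\int_{\Omega\times\Omega^c}+\int_{\Omega^c\times\Omega},
\]
each integral carrying the integrand displayed above.

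First I would extract the pieces that fit into the $\rN_i$. For a fixed $i$, the three sets $\B_i\times\B_i$, $\B_i\times(\widehat{\B}_i\setminus\B_i)$ and $(\widehat{\B}_i\setminus\B_i)\times\B_i$ are pairwise disjoint subsets of $\widehat{\B}_i\times\widehat{\B}_i$, so by non-negativity of the integrand the integral over their union is at most $\rN_i(u)$. Writing $\int_{\Omega\times\Omega}=\sum_i\int_{\B_i\times\B_i}+\sum_{i\neq j}\int_{\B_i\times\B_j}$, splitting $\Omega^c=(\Omega^c\cap\widehat{\B}_i)\sqcup(\Omega^c\setminus\widehat{\B}_i)$ inside $\int_{\B_i\times\Omega^c}$, and performing the symmetric split inside $\int_{\Omega^c\times\B_i}$, one sees that the diagonal term $\int_{\B_i\times\B_i}$, the near piece $\int_{\B_i\times(\Omega^c\cap\widehat{\B}_i)}$ of $\int_{\Omega\times\Omega^c}$, and the mirror near piece $\int_{(\Omega^c\cap\widehat{\B}_i)\times\B_i}$ of $\int_{\Omega^c\times\Omega}$ together add up to at most $\rN_i(u)$; summing over $i$ absorbs all of them into $\sum_{i\in I}\rN_i(u)$. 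This is exactly where the definition of $\rN$ over the \emph{doubled} ball is used: the slice of $\Omega^c$ lying in the annulus $\widehat{\B}_i\setminus\B_i$ is paid for by the slack in $\rN_i$.

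It then remains to bound the off-diagonal cross terms $\sum_{i\neq j}\int_{\B_i\times\B_j}$ and the far pieces $\sum_i\int_{\B_i\times(\Omega^c\setminus\widehat{\B}_i)}$ and $\sum_i\int_{(\Omega^c\setminus\widehat{\B}_i)\times\B_i}$. Here I would use only $\vert u(x)-u(y)\vert^p\leq 2^p\Vert u\Vert_\infty^p$ together with scaling. Fix $i$ and $x\in\B_i$; since $\widehat{\B}_i$ contains $\B_i$ but none of the $\B_j$ with $j\neq i$, the complement $\R^{m-1}\setminus\widehat{\B}_i$ is precisely the disjoint union of those $\B_j$ and of $\Omega^c\setminus\widehat{\B}_i$, and for $y$ in it one has $\vert x-y\vert\geq r_i$; hence
\[
\sum_{j\neq i}\int_{\B_i\times\B_j}+\int_{\B_i\times(\Omega^c\setminus\widehat{\B}_i)}\ \leq\ 2^p\Vert u\Vert_\infty^p\int_{\B_i}\Big(\int_{\vert x-y\vert\geq r_i}\frac{\rd y}{\vert x-y\vert^{p+m-2}}\Big)\rd x .
\]
The inner integral over $\R^{m-1}$ equals a dimensional constant times $r_i^{1-p}$ --- it converges precisely because $p>1$ --- and $\vert\B_i\vert=\omega_{m-1}r_i^{m-1}$, so the right-hand side is $\leq C_m\Vert u\Vert_\infty^p\,r_i^{m-p}$; summing over $i$, and treating $\sum_i\int_{(\Omega^c\setminus\widehat{\B}_i)\times\B_i}$ by the change of variables $x\leftrightarrow y$, produces the term $C_m\Vert u\Vert_\infty^p\sum_i r_i^{m-p}$ and finishes the proof. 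The only genuinely delicate point is the bookkeeping: $\Omega^c$ must be decomposed \emph{relative to each $\widehat{\B}_i$ separately} --- the near part into $\rN_i$, the distant part into the tail estimate --- and the pairwise disjointness of the $\widehat{\B}_i$, guaranteed by \eqref{loin}, is what prevents any rectangle from being counted twice and keeps the distant integral over $\{\vert x-y\vert\geq r_i\}$ with the clean scaling $r_i^{m-p}$.
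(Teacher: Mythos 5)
Your proof is correct and takes essentially the same route as the paper's: both split the Gagliardo double integral into near-diagonal pieces absorbed (by positivity) into the localized energies $\rN_i$ over the doubled balls, and far pieces bounded by $\Vert u \Vert_\infty^p$ times the kernel tail $\int_{\vert x-y\vert \geq r_i} \vert x-y\vert^{-(p+m-2)}\,{\rm d}y \sim r_i^{1-p}$ over $\vert \B^{m-1}(r_i,A_i)\vert \sim r_i^{m-1}$, with \eqref{loin} providing the disjointness and the distance lower bound. The only difference is bookkeeping: the paper writes an exact decomposition indexed by the doubled balls and their complement (the terms $\rN_i+{\rm K}_i$ and ${\rm R}$), whereas you partition according to the support $\cup_i \B^{m-1}(r_i,A_i)$ and absorb the annular near pieces into $\rN_i$ by monotonicity --- an equally valid, if anything slightly tidier, version of the same argument.
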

 \begin{proof}  Set $\displaystyle{\Omega=\R^{m-1} \setminus \underset{i\in I} \cup \B^{m-1}(2r_i, A_i)}$ and  
 $\Omega_i=\R^{m-1} \setminus \B^{m-1}(2r_i, A_i)$ for $i\in I$. 
 We may decompose, in view of the defining formula \eqref{seminorm}   the quantity
 $ \vert u  \vert_{1-1\slash p, p}^p$ as
 \begin{equation}
 \label{decomposons}
 \vert u  \vert_{1-1\slash p, p}^p=\underset {i \in I}  \sum \,  \left( \rN_{i} (u)+ {\rm K}_i(u)\right) + {\rm R}(u), 
 \end{equation}
 where we have set 
 $$
{\rm K}_i(u) =\int_{\B^{m-1}(2r_i, A_i)}\left( \int_{\Omega_i}  \frac {\vert u(x)-u(y) \vert^p}{\vert x-y \vert^{p+m-2 }} {\rm d}x \right){\rm d}y 
 $$
  and
  $$
 {\rm R}(u)= \int_{\Omega }\left( \int_{\R^{m-1}}  \frac {\vert u(x)-u(y) \vert^p}{\vert x-y \vert^{p+m-2 }} {\rm d}x \right){\rm d}y. 
 $$
  Since $u(x)=0$ for $x \in \Omega_i$ and $u(y)=0$ for $y \in  \B^{m-1}(2r_i, A_i) \setminus \B^{m-1}(r_i, A_i)$, we deduce that in the integral defining  ${\rm K}_i(u)$, we have 
  $${\rm \  if \    } \vert u(x)-u(y) \vert \not =0, \ x \in \Omega_i  {\rm \ and \ }  y \in  \B^{m-1}(2r_i, A_i),  
  {\rm \ then  \ } \vert x-y \vert \geq  \vert x-A_i\vert-r_i.$$
   It follows that, invoking  also  the definition of $\Omega_i$  that
  \begin{equation}
  \label{grassouillet}
  \begin{aligned}
  {\rm K}_i(u) &\leq C\vert \B^{m-1}(2r_i, A_i) \vert \,  \Vert u \Vert_{_\infty}^p
   \int_{2r_i}^\infty \left(  \frac{1} {\varrho-r_i}\right)^{p+m-2 }\varrho^{m-2} d\varrho \\
  &\leq   C_m \, r_i^{m-p}  \Vert u \Vert_{_\infty}^p.
  \end{aligned}
  \end{equation}
 We argue somewhat similarly for ${\rm R} (u)$. Since $u(y)=0$ for $y \in \Omega$ it follows that 
\begin{equation}
\label{grasdouble}
\begin{aligned}
 {\rm R}(u)&= \int_{\Omega }\left( \int_{\R^{m-1}}  \frac {\vert u(x) \vert^p}{\vert x-y \vert^{p+m-2 }} {\rm d}x \right){\rm d}y 
 =\underset {i \in I} \sum  \int_{\Omega }\left( \int_{\B^{m-1}(r_i, A_i)}  \frac {\vert u(x) \vert^p}{\vert x-y \vert^{p+m-2 }} {\rm d}x \right){\rm d}y \\
 &\leq \Vert u \Vert_{_\infty}^p\underset {i \in I} \sum  \int_{\Omega }\left( \int_{\B^{m-1}(r_i, A_i)}  \frac {{\rm d}x}{\vert x-y \vert^{p+m-2 }}  \right){\rm d}y \\
 & \leq 
  \Vert u \Vert_{_\infty}^p\underset {i \in I} \sum   \int_{\B^{m-1}(r_i, A_i)} \left( \int_ {\Omega}  \frac {{\rm d}y}{\vert x-y \vert^{p+m-2 }}  \right){\rm d} x.\\
\end{aligned}
 \end{equation}
   Since ${\rm dist}( \B^{m-1}(r_i, A_i), \Omega ) \geq r_i$, we deduce,  that  for $x \in \B^{m-1}(r_i, A_i)$ we have 
   $$ 
    \int_ {\Omega}  \frac {{\rm d}y}{\vert x-y \vert^{p+m-2 }} 
    \leq C_m \int_{r_i}^{+\infty}  \frac{1} {\varrho^{p+m-2 }}\varrho^{m-2}d\varrho 
    \leq  C_m r_i^{-p+1}.
$$
 Going back to \eqref{grasdouble}  we are hence led to 
 \begin{equation}
 \label{dobile}
  {\rm R}(u) \leq C_m \Vert u \Vert_{_\infty}^p\underset {i \in I}   \sum  \vert  \B^{m-1}(r_i, A_i)\vert r_i^{-p+1}
  \leq C_m r_i^{m-p}.
\end{equation}
  Combining  \eqref{grassouillet}  and \eqref{dobile}  with \eqref{decomposons} we obtain the desired conclusion \eqref{localized}.
   \end{proof}

\subsection{Proof of Lemma \ref{thrace}}
\label{thracitude}
We apply  the result of Lemma \ref{trouville} to  the case $I=\N$, $r_\mi=\mr_\mi$ and $A_\mi=\fM_\mi$ for $i\in \N$, so that the map $\uobst-\mq_0$ constructed in \eqref{mathcalitude} with respect to the  given sequences $(\mr_\mi)_{\mi \in \N}$ and $(\fM_\mi)_{\mi \in \N}$  belongs to $\mathfrak X_{m,p}(\{\mr_\mi, \fM_\mi\}_{\mi \in I})$.  It also belongs to  $L^\infty(\R^{m-1})$ since $\uobst$ is $\mN$ valued. It follows from  the second assumption in   \eqref{hypothesitude} that  
 \eqref{loin} is satisfied. We are hence in position to apply  inequality  \eqref{localized} to $\uobst$. It  yields 
 \begin{equation}
 \label{loc}
  \Vert \uobst -\mq_0 \Vert_{1-1\slash p, p}^p  \leq \underset {\mi \in \N}  \sum \,   \rN_{\mi} (\uobst- \mq_0)+  C_m \rL^p \underset {i \in \N}  \sum  \mr_\mi^{m-p}.
 \end{equation} 
In view of the definition \eqref{mathcalitude} and  the scaling law \eqref{scalsemi}, we have
$$\rN_{\mi} (\uobst) \leq \mr_\mi^{m-p}\lvert \mathfrak U_m^k -\mq_0 \rvert_{1-\frac 1p,  p }^p
 \leq C_m \mr_\mi^{m-p} \rk_\mi^{p-1},
  $$
  so that going back to \eqref{loc} we obtain
  \begin{equation}
  \label{glouglou}
   \Vert \uobst -\mq_0 \Vert_{1-1\slash p, p}^p  \leq   C_m\underset {\mi \in \N}  \sum \, \mr_\mi^{m-p} (\rk_\mi^{p-1}+1).
  \end{equation}
   Since the right hand side of this inequality is finite in view of assumption \eqref{hypothesitude}, the conclusion follows.
   \qed
\subsection{Proof of Lemma \ref{lemmitude}}
We may assume that the set 
$$\mathcal Z_{m, p}=\{U\in W_{\rm loc}^{1,p} (\mathcal D_m, \mN),\ U(x, 0)=\uobst(x) {\rm \ for  \ } x \in  \R^{m-1} \}$$
 is not empty since otherwise  $\Ext_{m, p}(u)=+\infty$ and the proof is complete in that case. Let 
 $U\in  \mathcal Z_{m, p}$.  As a consequence   of the definition \eqref{mathcalitude}  and  the scaling law \eqref{scalsemi} 
 \begin{equation} 
 \begin{aligned}
 \rE_p \left(U, \Cyl(\frac 3 2\mr_\mi)+\fM_\mi \right) &\geq  \mr_\mi^{m-p} \mathcal E^{\rm xt}_{m, p}(\mathfrak U_{m}^{\rk_\mi}) \geq C_m \mr_\mi^{m-p}  \left(\mathcal I^{\rm xt}_{m}(\mathfrak U_{m}^{\rk_\mi})\right)^{\frac{p}{\mpc}}\\
& \geq   C_m \mr_\mi^{m-p}\rk_\mi^{p}.
\end{aligned}
 \end{equation}
   Since the collection of  sets  $(\Cyl(\frac 3 2\mr_\mi)+\fM_\mi)_{i\in \N} $  represents a collection of disjoints sets, we may sum up  the previous inequalities, which leads to the  inequality
   $$\rE_p(U,\mathcal D_m) \geq \underset {\mi \in \N} \sum C_m \mr_\mi^{m-p}\rk_\mi^{p}.
   $$
Taking the infinum over all maps in   $\mathcal Z_{m, p}$ we obtain the 
   desired conclusion.
 \qed
\subsection{Proof of Proposition \ref{mainprop} completed}
 We claim that  there exists a sequence of real positive numbers  $(\mr_\mi)_{\mi \in \N}$ and a sequence  of integers $(\rk_\mi)_{\mi \in \N}$ such that both \eqref{hypothesitude} and \eqref{labelitude} are satisfied. There is a large variety of possible  choices for such sequences, here we propose one of them. Setting for instance
 \begin{equation}
 \label{seteq}
 \mr_\mi= \left(  \frac{1}{\mi+1 }\right)^{\frac{p+1}{m-p}}  {\rm \ and \ } \rk_\mi=\mi+1.
 \end{equation}
 we verify that this choice satisfies assumptions  \eqref{hypothesitude} and \eqref{labelitude}.  With this choice of sequences, it  follows from Lemma \ref{thrace} that $\uobst$ belongs to $\Wtrp(\R^{m-1}, \mN)$, whereas Lemma \ref{lemmitude} shows that
 $$ \Ext_{m, p}(\uobst)=+\infty$$
  and hence has no finite energy extension, completing  the proof of Proposition \ref{mainprop}.
  \qed
\section{Proof of Theorem \ref{maintheo} }
\label{black}
We choose an arbitrary point of $A_0$ on $\mM$. Given $\varrho >0$, we consider the geodesic ball on $\mN$  centered at $A_0$ and of radius $\varrho$  given by
$$ 
B_{\rm geod} (\varrho,  A_0)=\{x \in \mM  {\rm   \ such \  that  \  } {\rm dist} _{\rm geod} (x, a_0)< \varrho\}, 
$$
where ${\rm dist} _{\rm geod}$ stands for the geodesic distance on $\mN$.  By standard results, there exists some 
$\varrho_0>0$  and a diffeomorphism $\Phi : \B_m^+ (2) \to B_{\rm geod} (\varrho_0,  A_0)$ such that 
$$\Phi \left(\B^{m-1}(2) \times \{0\}\right) =B_{\rm geod} (\varrho_0,  A_0) \cap \partial \mM, $$
 with 
 $$\B_m^+(2)=\{ x=(x', x_m) \in \B^m(2), {\rm with \ }  x'\in \R^{m-1}, x'\geq  0\}.$$
Assume next that $\mpc +1\leq p<m$. We define a map $\wobst \in \Wtrp(\partial \mM, \mN)$ setting
\begin{equation}
\left\{
\begin{aligned}
\wobst (x)&=\uobst( \Phi^{-1} (x))   \in x \in  B_{\rm geod} (\varrho,  A_0) \cap \partial \mM,  \\
\wobst (x)&=\mq_0  {\rm \ otherwise}.
 \end{aligned}
 \right.
 \end{equation}
  We claim that there exists no map $W \in W^{1,p}(\mM, \mN)$ such that $W(x)=\wobst (x)$ on $\partial \mM$.  Indeed, assume by contradiction that such a map $W$  does exist.   Let $\tilde W$  be the restrict of the map $W$ to the set 
  $B_{\rm geod} (\varrho_0,  A_0)$. Then the map $U=\Phi^{-1}  \circ \tilde W$ would belong to  
  $W^{1,p}(\B_m^+(2), \mN)$  with 
  $$\tilde W (x) = \uobst(x) {\rm \ for \ } x \in \B^{m-1}(2) \times \{0\}.
  $$
  This however contradicts the properties of $\uobst$ as stated in Proposition \ref{mainprop} and hence  shows that for  
  $\mpc +1\leq p<m$, the extension property does not hold. For the existence part, that is when  $1<p < \mpc+1$ we invoke the result in \cite{HL}, to assert that the existence property holds, so that the proof of Theorem \ref{maintheo}
  is complete. 
  \qed
\section{The case $\mN$ is not simply connected}
\label{pearl}
In this section, we provide the proofs of Theorem \ref{deux} and  Theorem \ref{trois}.
\subsection{Proof of Theorem \ref{deux}}
We assume that  $2\leq p<m$ and prove that,  if the assumptions of Theorem \ref{deux} are satisfied, then,  in that case,  the extension property \ref{theglaude} does not hold. This  is indeed a  consequence of Proposition 2 in \cite{BeChi}, which we briefly recall:  It  asserts that,  given $0<s<1$ and $p\geq 1$ such that $1\leq sp<m-1$ and assuming that    $\pi_1(\mN)$ is infinite, then there exists a map 
$u \in W^{s, p}(\partial \mM, \mN)$ such that $u$ can not be written as $u=\pi\circ \varphi$, with $\varphi  \in W^{s, p}(\partial \mM, \mNcov)$. We apply this result to the specific case which is of interest for us, namely the case $s=1-1\slash p$, so that $m-1> sp=p-1\geq 1$.   Proposition 2 in \cite{BeChi} hence shows that $\Liftp(\partial \mM, \mN)$ does not hold, and therefore nor does  the extension property \ref{theglaude}, in view of Lemma \ref{drouot}.  The proof is hence complete.  
\qed
\subsection{Proof of Theorem \ref{trois}}
\label{fuseaux}
 We  proceed distinguishing  four   cases. 
 
 \smallskip
 \noindent
{\it Case 1: $\tilde {\mathfrak p}_{\rm c} +1\leq p<m$}. It follows from Theorem \ref{maintheo} that 
${\rm Ext}_p(\mM, \mNcov)$ does not hold, hence there exists some map 
$\varphi \in W^{1-1\slash p,p} (\partial M, \mNcov)$ which cannot be extended as $W^{1,p}(\mM, \mNcov)$ map to the whole of $\mM$. Next we set
$$u= \Pi \circ \varphi, { \rm \ so \ that \ } u \in W^{1-1\slash p, p}(\partial \mM, \mN).
$$
We  claim that $u$ can not be extended as a $W^{1,p}(\mM, \mN)$ map to the whole of $\mM$.  To prove the claim, we assume by contradiction that there exists some map $U \in W^{1,p}(\mM, \mN)$ such that 
 $U(\cdot)=u(\cdot)$ on the boundary $\partial \mM$.   Since $p \geq 2$,  it follows from Theorem 1  in \cite{BeChi} that there exists some map $\Phi \in W^{1,p}(\mM, \mNcov)$ such that $U=\pi \circ \Phi$. Restricting this relation to the boundary, we are led to $\Phi (\cdot)=\varphi(\cdot)$ on $\partial \mM$, contradicting the fact that $\varphi$ cannot be extended and hence proving the claim. It follows that ${\rm Ext}_p(\mM, \mN)$ does not hold,  establishing the first assertion in  part i) Theorem \ref{trois}.

 \medskip
 \noindent
{\it Case 2 : $1\leq p<\tilde {\mathfrak p}_{\rm c} +1\leq m$ and property $\Liftp(\partial \mM, \mN)$ holds}.  We will show that in that case, given any map  $u \in W^{1-1\slash p, p}(\partial \mM, \mN)$ there exists a map $U \in W^{1,p}(\mM, \mN)$ such that  $U(\cdot)=u(\cdot)$ on the boundary $\partial \mM$.  Since we assume that $\Liftp(\partial \mM, \mN)$ holds, there exists some map $\varphi \in W^{1,p} (\partial M, \mNcov)$ such that $u=\Pi \circ \varphi$.    Applying Theorem \ref{maintheo} to the target $\mNcov$, we see that property ${\rm Ext}_p(\mM, \mNcov)$ holds, so that there exist a map $\Phi \in W^{1,p}(\mM, \mNcov)$  such that  $\Phi (\cdot)=\varphi(\cdot)$ on $\partial \mM$.   Setting $U=\pi \circ \Phi$, we obtain the desired map $U$. This proves that ${\rm Ext}_p(\mM, \mN)$ holds in the case considered. 
As a special case, we obtain the part iii)  of  Theorem \ref{trois}.

\medskip
 \noindent
{\it Case 3: $1 \leq p<2$.}  In this special case, it follows from Theorem 3 case ii) of \cite{BeChi} applied with 
$s=1-1\slash p$, so that $sp=p-1<1$, that $\Liftp(\partial \mM, \mN)$ holds. Hence the assumptions of  Case 2 are satisfied, so that we obtain that 
${\rm Ext}_p(\mM, \mN)$ holds in the case considered. 
 This yields the proof to  part ii) of   Theorem \ref{trois}.
 
 \medskip
 \noindent
{\it Case 4: $2\leq 3 \leq p<m$}.  In this case, since $\pi_{[p]-1}(\mN)=\pi_1(\mN)\not = \{ 0\}$, we obtain by \cite{BeD} (using the method of   \cite{HL}) topological obstructions to the extension problem, yielding  hence  the proof to the second statement in Theorem \ref{trois}, part i). 
 
 \medskip
 The three parts  of  Theorem  \ref{trois} are hence proved, so that the proof is complete.
 \qed


\end{document}